\newcommand\reallywidehat[1]{%
\savestack{\tmpbox}{\stretchto{%
  \scaleto{%
    \scalerel*[\widthof{\ensuremath{#1}}]{\kern-.6pt\bigwedge\kern-.6pt}%
    {\rule[-\textheight/2]{1ex}{\textheight}}
  }{\textheight}%
}{0.5ex}}%
\stackon[1pt]{#1}{\tmpbox}%
}
\newcommand\reallywidecheck[1]{%
\savestack{\tmpbox}{\stretchto{%
  \scaleto{%
    \scalerel*[\widthof{\ensuremath{#1}}]{\kern-.6pt\bigwedge\kern-.6pt}%
    {\rule[-\textheight/2]{1ex}{\textheight}}
  }{\textheight}%
}{0.5ex}}%
\stackon[1pt]{#1}{\scalebox{-1}{\tmpbox}}%
}
\numberwithin{equation}{section}
\newcommand{\supp}{\mbox{\rm supp}}
\newcommand{\dens}{\mbox{\rm dens}}
\newcommand{\udens}{\overline{\mbox{\rm dens}}}
\newcommand{\ldens}{\underline{\mbox{\rm dens}}}
\newcommand{\RR}{{\mathbb R}}
\newcommand{\CC}{{\mathbb C}}
\newcommand{\TT}{\mathbb T}
\newcommand{\NN}{\mathbb N}
\newcommand{\XX}{\mathbb X}
\newcommand{\YY}{\mathbb Y}
\newcommand{\AAA}{\mathbb A}
\newcommand{\SSS}{\mathbb S}
\newcommand{\cL}{{\mathcal L}}
\newcommand{\uM}{\overline{M}}
\newcommand{\oplam}{\mbox{\Large $\curlywedge$}}
\newcommand{\cM}{{\mathcal M}}
\newcommand{\cA}{{\mathcal A}}
\newcommand{\cB}{{\mathcal B}}
\newcommand{\cT}{{\mathcal T}}
\newcommand{\card}{\mbox{\rm card}}
\newcommand{\eps}{\varepsilon}
\newcommand{\m}{\mathfrak{ m}}
\newcommand{\dd}{\mbox{\rm d}}
\newcommand{\Cu}{C_{\mathsf{u}}}
\newcommand{\Cc}{C_{\mathsf{c}}}
\newcommand{\SAP}{\mathcal{SAP}}
\newcommand{\Bap}{\mathcal{B}\hspace*{-1pt}{\mathsf{ap}}}
\newcommand{\extW}{\cM_W^G}
\newcommand{\extB}{\cM_B^G}
\newcommand{\extU}{\cM_U^G}
\theoremstyle{definition}
 \newtheorem{theorem}{Theorem}[section]
 \newtheorem{lemma}[theorem]{Lemma}
 \newtheorem{proposition}[theorem]{Proposition}
 \newtheorem{corollary}[theorem]{Corollary}
 \newtheorem{question}[theorem]{Question}
 \newtheorem{definition}[theorem]{Definition}
  \newtheorem{remark}[theorem]{Remark}
\begin{document}

\title[Borel model sets]{Model sets with precompact Borel windows}

\author{Nicolae Strungaru}
\address{Department of Mathematical Sciences, MacEwan University \\
10700 -- 104 Avenue, Edmonton, AB, T5J 4S2\\
Phone: 780-633-3440 \\
and \\
Institute of Mathematics ``Simon Stoilow''\\
Bucharest, Romania}
\email{strungarun@macewan.ca}
\urladdr{http://academic.macewan.ca/strungarun/}

\begin{abstract} Given a cut and project scheme and a pre-compact Borel window we show that almost surely all positions of the window
give rise to point sets with Besicovitch almost periodic Dirac combs. In particular, all those positions lead to pure point diffractive point sets
with autocorrelation given by the covariogram of the window and diffraction given by the square of the Fourier transform of the window. Moreover the Fourier--Bohr
coefficients exist and the intensity of the Bragg peaks is given by their absolute value square. We show the existence of an ergodic measure with pure point dynamical
spectrum such that all those window positions give rise to point sets which are generic for this measure. We complete the paper by reanalysing the class of
weak model sets of maximal density.
\end{abstract}

\keywords{}

\subjclass[2010]{43A05, 43A25, 52C23, 78A45}

\maketitle

\section{Introduction}

In 1984 Dan Schettmann discovered new solids with clear diffraction pattern and 5-fold symmetry \cite{She}. This discovery led to a paradigm shift in physics, as for more that 100 years clear diffraction patterns have been thought to be the identifying signature of periodic crystals, and thus incompatible with 5-fold rotational symmetry. The newly discovered materials are now called quasi-crystals.

When trying to label the positions of Bragg peaks in the diffraction of the quasicrystals physicists observed that these positions can be typically indexed as linear combinations with integer coefficients of 6 fixed vectors (instead of 3 vectors as for periodic crystals). This suggests that maybe quasicrystals can be modeled by using lattices in higher dimensional spaces.

The standard method for producing mathematical models for quasicrystals is via a cut and project scheme(or on short CPS). This method was introduced in 1970's by Y. Meyer \cite{Meyer} in order to produce point sets with certain harmonic properties, and was rediscovered by deBruijn \cite{dBr} when studying the construction of the Penrose tiling \cite{Pen} via the Ammann bars. Meyer's pioneering work in the study of CPSs was popularized and expanded in the 1990's by R.V. Moody \cite{MOO,Moody} and J. Lagarias \cite{LAG1,LAG}. The basic idea beyond CPSs is the following: to produce a model for a 3-dimensional quasicrystals we start with a lattice $\cL$ in a higher dimensional space which sits at an irrational angle with respect to our space, cut a strip of the lattice which is within bounded distance of the 3-dimensional space and project the lattice points in this strip on our space. The width of the strip is usually called the ``window" (see Def.~\ref{def:cps} below for the formal definition).

If the shape of the strip is not too bad, the point set produced via a CPS should still show some of residues of the long-range order of the lattice $\cL$.
Indeed, for polyhedral windows, Hof \cite{Hof1,Hof2,HOF3} and Solomyak \cite{So98} showed that the resulting point sets show a clear diffraction pattern (pure point diffraction), and the intensities of the Bragg peaks can be related to the Fourier transform of the window via the so called dual CPS (see below for definitions and the exact formulas). Pure point diffraction has been established by M. Schlottmann for a large class of windows, called regular, via the use of dynamical systems \cite{Martin2}. M. Baake and R.V. Moody gave an alternate proof of this result via the use of harmonic analysis and almost periodic measures \cite{BM}. Recently, C. Richard and I showed that the diffraction formula for regular model sets is just the Poisson Summation Formula (PSF) for the lattice $\cL$ in the super-space $G \times H$. In general, if the window is ``bad", the diffraction can have a continuous component but the pure point component still shows high coherence \cite{NS1,NS2,NS5,NS11,NS19}.

In recent years many results about model sets have been extended to a larger class of such models, called weak model sets of maximal density \cite{BHS,KR,KR2,Kel1}. This class contains many
models of number theoretic origin, such as square free integers, visible points of a lattice, $\cB$-free integers and coprime families of lattices (see for example \cite{BH,BMP,BKKL,KKL,PH}).
A key result in this settings is a theorem by R. V. Moody \cite{Moody2} which states that for any CPS and any compact window, almost surely all positions of the window give maximal density weak model sets, and hence point sets wit pure point diffraction. Similar results can be shown for open pre-compact windows (\cite{BHS}), but it was it clear so far how to extend these results beyond topological windows. It is our goal to do this in this paper, by using some new developments into the theory of Besicovitch almost periodic measures \cite{LSS,LSS2}.

Recently, \cite{LSS,LSS2} studied the connection between pure point diffraction and dynamical spectra and mean, Besicovitch and Weyl almost periodicity. It was shown that an ergodic dynamical system of translation bounded measures has pure point spectrum if and only if almost surely all elements are Besicovitch almost periodic (or mean almost periodic, respectively). Furthermore, each Besicovitch almost periodic measure $\mu$ gives rise to an ergodic measure $m$ with pure point dynamical spectrum, such that $\mu$ is generic for $m$. Finally, it was shown that maximal density weak model sets give rise to Besicovitch almost periodic Dirac combs.

In this paper, by combining the results and techniques from \cite{LSS,LSS2} with \cite{Moody2} and \cite{BHS,KR} we extend many results about maximal density weak model set to model sets with pre-compact Borel windows. Given a CPS $(G,H,\cL)$, a precompact Borel window $B$, and a tempered van Hove sequence $\cA$, we prove the existence of a set $\cT \subseteq \TT$ of full Haar measure such that, for all $(s,t)+\cL \in \cT$ the set $-s+\oplam(t+W)$ is Besicovitch almost periodic with respect to $\cA$ and has autocorrelation $\gamma$ and diffraction $\widehat{\gamma}$ (with respect to $\cA$) given by
\begin{align}
  \gamma &= \dens(\cL) \omega_{c(B)} = \dens(\cL) \sum_{(x,x^star) \in \cL} c(B)(x^\star) \delta_x \label{eqaut}\\
  \widehat{\gamma} &=\dens(\cL)^2 \omega^{}_{|\widecheck{1_B}|^2} = \dens(\cL)^2 \sum_{(\chi,\chi^\star) \in \cL^0} \left| \int_{B} \chi^\star(t) \dd t \right|^2 \delta_{\chi}  \,, \label{eqdiff}
\end{align}
where $c(B)$ denotes the covariogram of $B$ (see below for definitions and notations). In particular, for all $(s,t)+\cL \in \cT$ the set $-s+\oplam(t+B)$ has pure point diffraction spectra. We further show that for all $(s,t)+\cL \in \cT$ the set $-s+\oplam(t+B)$ has well defined Fourier--Bohr coefficients $a_{\chi}^{\cA}$ with respect to $\cA$, which satisfy
\[
a_{\chi}^{\cA}(-s+\oplam(t+B)) = \left\{
\begin{array}{cc}
  \chi(s) \int_{t+B} \chi^\star(z) \dd z & \mbox{ if } \chi \in \pi_{\widehat{G}}(\cL^0) \\
  0 & \mbox{ otherwise } \,.
\end{array}
\right.
\]
In particular, the Consistent Phase Property (CPP)
\[
\widehat{\gamma}(\{ \chi \})=\left| a_\chi^{\cA} \right|^2  \qquad \forall \chi \in \widehat{G}
\]
holds for all $(s,t)+\cL \in T$.

Next, we show that there exists an ergodic measure $\m$ on the extended hull $\extB = \overline{ \{ -s+\oplam(t+B) : s \in G, t \in H \}}$ such that, for all $(s,t)+\cL \in \cT$ the set $-s+\oplam(t+B)$ is generic for $\m$. In particular, $\m$ has pure point spectra, autocorrelation $\gamma$ and diffraction $\widehat{\gamma}$ given by \eqref{eqaut} and \eqref{eqdiff}, respectively. We want to emphasize here that in the case of compact and open windows, the measure $\m$ can be identified as the measure which on the hull leads to the extremal density of pointsets. This extremal condition implies that $\m$ is an extremal point of the set of $G$-invariant probability measures on $\extW$ and hence ergodic. This type of approach cannot be used for Borel windows, and the ergodicity of the measure in this case may look a bit surprising.
The key observation is the recent result that Besicovitch almost periodic measures are generic for ergodic measures on their hull \cite{LSS}, and in this case standard approximations via continuous functions show that all positions of the windows given by points in $\cT$ are generic for the same ergodic measure $\m$.

We complete the paper by reanalyzing the case of compact windows $W$ studied in \cite{BHS,KR}. In this situation we identify a natural set $\YY_{b,\m} \subseteq \extW$ of generic elements with respect to $\m$ such that $m(\YY_{b,\m})=1$, which contains all elements of the form $-s+\oplam(t+W)$ with $(s,t)+\cL \in \cT$. We show that the eigenfunctions are continuous
on $\YY_{b,\m}$ and that there exists a continuous $G$-invariant mapping $\pi : \YY_{b,\m} \to \TT_{red}$ which induces an isometric isomorphism between $(L^2(\extW, \m)$ and $L^2(\TT_{red}, \theta_{\TT_{red}})$. Here, $\TT_{red}$ is the group obtained from $\TT$ via factoring out the periods of $1_{B} \in L^1(H)$. For regular model sets we have $\YY_{b,\m}=\extW=\XX(\oplam(W))$ and $\pi$ is the Schlottmann mapping from \cite{Martin2}.

\medskip

The paper is structured as follows: In Section~\ref{Sect: prel} we review the basic definitions and results needed for the paper. We continue in Section~\ref{sect: min dens open}
by reviewing and extending some results about minimal density model with open window. In Section~\ref{Sect:model sets with norel windows} we prove the main result of the paper in Theorem~\ref{thm:main}. In this Theorem we prove that given a cut and project scheme $(G,H, \cL)$ and a precompact Borel set $B \subseteq H$,
then almost surely all $(s,t) \in \TT$ give a Besicovitch almost periodic measure $\delta_{-s+\oplam(t+B)}$. In particular, for all such $(s,t) \in \TT$, the measure $\delta_{-s+\oplam(t+B)}$ is pure point diffractive, has well defined Fourier--Bohr coefficients which can be calculated via the standard integration over the window $B$ formula, the autocorrelation of $\delta_{-s+\oplam(t+B)}$ is given by the covariogram $c(B)$ of the window, and the intensity of the Bragg peaks is given by the square of the Fourier--Bohr coefficient. Furthermore, there exists an ergodic measure $\m$ on the extended hull $\extB$ with pure point spectrum such that all for $\theta_{\TT}$-almost all $(s,t) \in \TT$ the measure $\delta_{-s+\oplam(t+B)}$ is generic for $\m$ and the Fourier--Bohr coefficients define measurable eigenfunctions in $L^2(\XX, \m)$.

We continue in Section~\ref{Sect: prop m} by studying the properties of the ergodic measure $\m$. We identify two subsets $\YY_{b}$ and $\YY_{b,\m}$ of full measure in $\XX$ which contain all the measures $\delta_{-s+\oplam(t+B)}$ for the generic $(s,t) \in \TT$ given in Theorem~\ref{thm:main} and study their properties. Next, we
identify a subset $\YY_{app} \subseteq \YY_{b, \m}$ and a natural Borel factor mapping $\pi : \YY_{app} \to \TT_{red}$ (where as mentioned before $\TT_{red}$ is the factor of $\TT$ obtained by modding out the periods of the window) and proving that for weak model sets of maximal density, and weak model sets with open window and minimal density, we have
$$
\YY_{b} =\YY_{b,\m} = \YY_{app} \,.
$$
Moreover, these spaces are exactly the set of all elements in the extended hull whose density is exactly the measure of the window. This allows us to recover all known properties for weak model sets of extremal density.

We complete the paper by listing the corresponding results for model sets with open pre-compact windows.

\section{Preliminaries}\label{Sect: prel}

For the entire paper $G$ denotes a second countable locally compact Abelian group (LCAG). We will denote by $\Cu(G)$ the space of uniformly continuous bounded functions on $G$, and by $\Cc(G)$ the subspace of compactly supported continuous functions. The Haar measure on $G$ is denoted by $\theta_G$. For a Borel set $B \subseteq G$ we will often use the notation $|B|:= \theta_G(B)$.

Given a function $f: G \to \CC$ and $t \in G$ we will use the following notations
\begin{displaymath}
f^\dagger(x):= f(-x) \,;\, \tilde{f}(x):=\overline{f(-x)} \,;\, T_tf(x):=f(x-t) \,.
\end{displaymath}

Given two functions $f \in \Cu(G),g \in \Cc(G)$ or $f \in \Cc(G),g \in \Cu(G)$ we define the \textbf{convolution}
\[
f*g(x):= \int_{G} f(x-t)g(t)\dd t = \int_{G} f(t) g(x-t) \dd t \,.
\]

\subsection{Autocorrelation and diffraction}

Here we briefly review the definition of autocorrelation and diffraction measure. For a more detailed overview we recommend \cite{TAO,TAO2,BL,DL03,CRS,CRS2}.

Let us start by reviewing the notion of measures.

\begin{definition} By a \textbf{Radon measure} we mean a linear mapping $\mu : \Cc(G) \to \CC$ with the property that for all compacts $K \subseteq G$ there exists some
$C_K$ such that, for all $f \in \Cc(G)$ with $\supp(f) \subseteq K$ we have
\[
\left| \mu(f) \right| \leq C_K \|f\|_\infty \,.
\]
We will simply refer to a Radon measure as a measure. For properties of measures we refer the reader to \cite{Rud}.

A measure $\mu$ gives rise to a positive measure $\left| \mu \right|$, called the \textbf{total variation measure of $\mu$} with the property that for all $f \in \Cc(G)$ with $f \geq 0$ we have
\begin{displaymath}
\left| \mu \right|(f) = \sup \{ \left| \mu(g) \right| : g \in \Cc(G), |g| \leq f \} \,.
\end{displaymath}
For details see \cite{Ped} or \cite[Appendix]{CRS2}.

A measure $\mu$ is called \textbf{finite} if $\left| \mu \right|(G) <\infty$.
\end{definition}

\smallskip

Given a measure $\mu$ and $t \in G$ we define new measures via
\begin{displaymath}
\mu^\dagger(f):= \mu(f^\dagger) \,;\, \tilde{\mu}(f):=\overline{\mu(\tilde{f})} \,;\, T_t\mu(f):=\mu(T_{-t}f) \,.
\end{displaymath}

For a function $f \in \Cc(G)$ and a measure $\mu$ we define their convolution $f *\mu$ via
\[
f*\mu(x) := \int_{G} f(x-t) \dd \mu(t) \,.
\]

Finally, for a measure $\mu$ and a finite measure $\nu$ we can define the convolution $\mu*\nu$ via (see \cite{ARMA1,MoSt} for details)
\[
\mu*\nu(f) := \int_G \int_G f(s+t) \dd \mu(s) \dd \nu(t) \,.
\]

\smallskip

\begin{definition}
A measure $\mu$ is called \textbf{translation bounded} if
\[
\| \mu \|_{K} := \sup_{t \in G} \left| \mu \right|(t+K) < \infty
\]
for some fixed compact set $K$ with non-empty interior.

We denote the space of translation bounded measures by $\cM^\infty(G)$.
\end{definition}

\begin{remark}
\begin{itemize}
  \item[(a)] If $B$ is any precompact Borel set with non-empty interior, then $\| \, \|_B$ and $\| \, \|_{K}$ are equivalent norms \cite{BM,SS}.
  \item[(b)] A measure is translation bounded if and only if for all $f \in \Cc(G)$ we have $f*\mu \in \Cu(G)$ \cite{ARMA1,MoSt}.
\end{itemize}
\end{remark}

\bigskip

Next, we introduce the autocorrelation and diffraction measures of a translation bounded measure $\omega$. These are defined via use of average sequences, and
we first introduce these.

\begin{definition}
A sequence $(A_k)$  of compact subsets of $G$  called a {\bf van
Hove sequence} if for all compact sets $K \subseteq G$ we have
\[ \lim_{k \to \infty} \frac{\theta_G(\partial^K(
A_k)) } {|A_k|) } = 0\,, \] where the {\bf $K$-boundary} is
defined by:
\[\partial^K(A_k)=\overline{((A_k+K)\backslash A_k)} \cup((\overline{G \backslash A_k} - K) \cap A_k) \,.\]

A van Hove sequence is called \textbf{tempered} if there exists a constant $C$ such that, for all $k$ we have
\[
|\cup_{j=1}^n \left(A_n-A_j\right)| \leq C |A_n | \,,
\]
where $A-B$ denotes the \textbf{Minkowski difference}
\[
A-B=\{ a-b : a \in A, b \in B \} \,.
\]
\end{definition}

A LCAG group admits a tempered van Hove sequence if and only if it is $\sigma$-compact \cite{Martin2,SS1}. Moreover, in this case the van Hove sequence can be chosen to be teppered. Since we rely on van Hove sequences, we will always assume $\sigma$-compactness.

The tempered condition of the van Hove sequence is needed to make sure that the pointwise ergodic theorem holds \cite{Lin}.

\smallskip

\begin{definition} Let $\omega$ be a translation bounded measure and $\cA$ a van Hove sequence. We say that $\omega$ has a well defined \textbf{autocorrelation measure} $\gamma$ with respect to $\cA$ if
\[
\gamma= \lim_k \gamma_k
\]
exists in the vague topology. Here
\[
\gamma_k= \frac{1}{|A_k|} \left( \omega|_{A_k}\right)*\widetilde{\left( \omega|_{A_k}\right)} \,.
\]
\end{definition}

Given a translation bounded measure $\omega$ on a second countable LCAG, the autocorrelation always exists with respect to a subsequence of $\cA$ \cite{BL}.

\bigskip

Next, we introduce the diffraction measure.

Recall first that for some $f \in L^1(G)$ we denote by
\begin{displaymath}
\widehat{f}(\chi):= \int_{G} \overline{\chi(t)} f(t) \dd t
\end{displaymath}
the \textbf{Fourier transform} of $f$ and by
\begin{displaymath}
\widecheck{f}(\chi):= \int_{G} \chi(t) f(t) \dd t = \widehat{f}(\chi^{-1})
\end{displaymath}
the \textbf{inverse Fourier transform} of $f$.

\begin{proposition}\label{Prop FT}\cite{BF,ARMA1,MoSt} Given a translation bounded measure $\omega$ with autocorrelation $\gamma$, there exists a positive measure $\widehat{\gamma}$ on the dual group $\widehat{G}$ such that, for all $f \in \Cc(G)$ we have $\widecheck{f} \in L^2(\widehat{\gamma})$ and
\begin{equation}\label{FT}
\widehat{\gamma}( \left| \widecheck{f} \right|^2) = \gamma(f*\tilde{f}) \,.
\end{equation}
\end{proposition}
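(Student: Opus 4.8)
The plan is to recognise the autocorrelation $\gamma$ as a \emph{positive definite} measure — that is, $\gamma(\varphi*\widetilde\varphi)\ge 0$ for every $\varphi\in\Cc(G)$ — and then to invoke the Bochner-type theorem for unbounded measures from \cite{BF,ARMA1,MoSt}, which asserts that such a measure is Fourier transformable and that its transform is a \emph{positive} measure. Granting this, $\widehat\gamma$ is precisely that positive transform, and the identity \eqref{FT} is nothing but the defining property of the Fourier transform of $\gamma$, specialised to the test function $\varphi*\widetilde\varphi\in\Cc(G)$ and combined with the elementary computation $\widecheck{\varphi*\widetilde\varphi}=|\widecheck\varphi|^{2}$ (which in particular yields $\widecheck\varphi\in L^{2}(\widehat\gamma)$). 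So the proposition reduces to showing that $\gamma$ is positive definite.

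For that I would show that each approximant $\gamma_k$ is positive definite and that positive definiteness survives the vague limit. For the first step, fix $\varphi\in\Cc(G)$ and $k$; since $\omega$ is translation bounded and $A_k$ is compact, $\nu_k:=\omega|_{A_k}$ is a finite, compactly supported measure, so that $\varphi*\overline{\nu_k}\in\Cc(G)\subseteq L^{2}(G)$, and a short computation with convolutions gives
\[
\gamma_k\bigl(\varphi*\widetilde\varphi\bigr)=\frac{1}{|A_k|}\,\bigl(\nu_k*\widetilde{\nu_k}\bigr)\bigl(\varphi*\widetilde\varphi\bigr)=\frac{1}{|A_k|}\,\bigl\|\varphi*\overline{\nu_k}\bigr\|_{L^{2}(G)}^{2}\;\ge\;0
\]
(equivalently, $\nu_k*\widetilde{\nu_k}$ is a finite measure whose Fourier transform is the non-negative function $\bigl|\widehat{\nu_k}\bigr|^{2}$, hence is positive definite). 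For the second step, $\varphi*\widetilde\varphi$ again lies in $\Cc(G)$, so the vague convergence $\gamma_k\to\gamma$ forces $\gamma(\varphi*\widetilde\varphi)=\lim_k\gamma_k(\varphi*\widetilde\varphi)\ge 0$; since $\varphi\in\Cc(G)$ was arbitrary, $\gamma$ is positive definite. (It is moreover translation bounded, being a vague limit of the $\gamma_k$, which are uniformly translation bounded thanks to translation boundedness of $\omega$ and the van Hove property of $\cA$; but positive definiteness alone already suffices to apply the cited theorem.) The proposition then follows.

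The only non-elementary ingredient is the Bochner-type theorem for measures used at the very end: producing, from the positive definiteness of the Radon measure $\gamma$, an honest positive Radon measure $\widehat\gamma$ on $\widehat G$ realising the quadratic form $\varphi\mapsto\gamma(\varphi*\widetilde\varphi)$. This is the heart of the matter and is exactly what \cite{BF,ARMA1,MoSt} provide; everything else is the positivity computation above plus routine bookkeeping. The one point needing a little care is to match the Fourier-transform conventions (the roles of $\widehat f$ and $\widecheck f$) of those references with the ones fixed here — a matter of at most the reflection $\chi\mapsto\chi^{-1}$ on $\widehat G$, which is harmless because $\gamma$ is Hermitian, being a vague limit of the Hermitian measures $\gamma_k$, so that \eqref{FT} holds in the stated normalisation.
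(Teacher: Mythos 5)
Your argument is correct. The paper offers no proof of this proposition at all --- it is quoted from \cite{BF,ARMA1,MoSt} as a known result --- and what you have written is precisely the standard argument underlying those citations: each $\gamma_k$ has the form $|A_k|^{-1}\,\nu_k*\widetilde{\nu_k}$ with $\nu_k=\omega|_{A_k}$ finite, hence is positive definite; positive definiteness passes to the vague limit because $\varphi*\widetilde\varphi\in\Cc(G)$ is an admissible test function; and the Bochner-type theorem of Argabright--de Lamadrid (equivalently Berg--Forst) then produces the positive transform, with \eqref{FT} being exactly the definition of transformability on the span of $\{f*\tilde f : f\in\Cc(G)\}$ once one checks $\widecheck{f*\tilde f}=\bigl|\widecheck f\bigr|^2$ and $\widecheck f\in L^2(\widehat\gamma)$, as you do. The one cosmetic slip is the expression $\varphi*\overline{\nu_k}$: the function whose squared $L^2$-norm appears in the computation is $u\mapsto\int_G\varphi(u+s)\,\dd\nu_k(s)$ (a reflection of a convolution rather than a convolution with a conjugated measure), but the identity $(\nu_k*\widetilde{\nu_k})(\varphi*\widetilde\varphi)=\|\cdot\|_{L^2}^2\ge 0$ that you actually use is exactly right, so this does not affect the argument. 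Your remark that Hermitianity of $\gamma$ reconciles the $\widehat{\phantom{f}}$ versus $\widecheck{\phantom{f}}$ conventions is also the correct way to handle the normalisation.
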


\begin{remark} Given a measure $\gamma$ on $G$, if there exists a measure $\widehat{\gamma}$ on $\widehat{G}$ satisfying \eqref{FT}, we say that $\gamma$ is Fourier transformable with Fourier transform $\widehat{\gamma}$.
\end{remark}

\begin{definition} For a measure $\omega \in \cM^\infty(G)$ with autocorrelation $\gamma$, we refer to the measure $\widehat{\gamma}$ from Prop.~\ref{Prop FT} as the \textbf{diffraction of $\omega$} (with respect to $\cA$).
\end{definition}

\subsection{Cut and Project Schemes}

Next, we review the definition of cut and project schemes and properties of weak model sets. For more details we recommend \cite{TAO,TAO2,MOO,Moody2,LR,CRS,NS11,NS19}, just to name a few.

\begin{definition}\label{def:cps} By a \textbf{cut and project scheme} (or simply \textbf{CPS}) we understand a triple $(G,H, \cL)$ consisting of two LCAG $G,H$ and a lattice $\cL \subseteq G \times H$ with the following properties
\begin{itemize}
  \item[(a)] The restriction of the first projection $\pi_G|_{\cL}$ to $\cL$ is one-to-one.
  \item[(b)] $\pi_{H}(\cL)$ is dense in $H$.
\end{itemize}
\end{definition}

We usually assume that $G$ is second countable, as this is needed for diffraction theory, but we will not add this as part of the definition of a CPS.

\smallskip
Given a CPS $(G,H, \cL)$ we denote by $\TT:=(G \times H) /\cL$ and by
\[
L:= \pi_G(\cL) \,,
\]
which is a subgroup of $G$. Then, $\pi_{G}|_{\cL} : \cL \to L$ becomes an isomorphism, and hence we can define a mapping
\[
\star : L \to H \, ;\, \star := \pi_{H} \circ \left( \pi_{G}|_{\cL} \right)^{-1} \,,
\]
called the \textbf{star mapping} of the CPS.

This mapping allows us re-parameterise $\cL$ as
\[
\cL= \{ (x,x^\star) : x \in L \} \,.
\]

\medskip
Consider a CPS $(G,H, \cL)$ and a set $W \subseteq H$. We define
\[
\oplam(W)= \{ x \in L : x^\star \in W \} \,.
\]
Note here that if $W$ is precompact, then $\oplam(W)$ is uniformly discrete and if $W^\circ \neq \emptyset$ then $\oplam(W)$ is relatively dense \cite{MOO}.

\medskip
Also, given some $h : H \to \CC$ we can define a formal sum
\[
\omega_h:= \sum_{x \in L} h(x^\star) \delta_x \,.
\]
If $h$ is compactly supported and bounded, then $\omega_h$ is a measure. There are many other conditions on $h$ which ensure that $\omega_h$ is a measure, see for example \cite{LR,CRS,NS19,NS20}.

\smallskip

Note here that for all $s \in L$ we have:
\begin{align}
 \oplam(s^\star+W)  &=\{x \in L : x^\star \in s^\star +W\} =\{x \in L : (x-s)^\star  \in W\}= s+\oplam(W) \label{eq:translate oplam(W)}\\
  T_s\omega_h &=\sum_{x \in L} h(x^\star) \delta_{s+x} =\sum_{y \in L} h(y^\star-s^\star) \delta_{y}= \omega_{T_{s^\star}h} \,. \label{eq:translate omegah}
\end{align}
We will use these formulas often.

\bigskip

Let us review next the concept of dual CPS.  Given a CPS $(G,H, \cL)$ we can define
\[
\cL^0:= \{ (\chi, \psi) \in \widehat{G} \times \widehat{H} : \chi(x)\psi(x^\star)=1 \forall x \in L \} \,.
\]
Then, $(\widehat{G}, \widehat{H}, \cL^0)$ is a CPS\cite{MOO}, called the \textbf{CPS dual to $(G, H, \cL)$}. This dual CPS plays the central role in the study of diffraction of model sets.

\bigskip

Given a uniformly discrete set $\Lambda \subseteq G$ and a van Hove sequence $\cA$ we define
\begin{align*}
\ldens_{\cA}(\Lambda) &:= \liminf_k \frac{\card(\Lambda \cap A_k)}{|A_k|} \\
\udens_{\cA}(\Lambda) &:= \limsup_k \frac{\card(\Lambda \cap A_k)}{|A_k|} \,.
\end{align*}

If the following limit exists, or equivalently if $\ldens_{\cA}(\Lambda)=\udens_{\cA}(\Lambda)$, we define
\[
\dens_{\cA}(\Lambda)= \lim_k \frac{\card(\Lambda \cap A_k)}{|A_k|} \,.
\]

\medskip

Let us next recall the following result of \cite{HR}.

\begin{theorem}\cite{HR} Let $(G,H, \cL)$ be a CPS and $W \subseteq H$ be pre-compact. Then, for all van Hove sequences $\cA$ we have
\begin{displaymath}
  \dens(\cL) \theta(W^\circ) \leq \ldens(\oplam(W)) \leq \udens(\oplam(W)) \leq \dens(\cL) \theta(\overline{W}) \,.
\end{displaymath}
\end{theorem}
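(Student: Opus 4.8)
The plan is to derive both nontrivial inequalities (the middle one $\ldens\le\udens$ being trivial) from one clean density formula for weighted model combs, and to feed in the set-theoretic content by sandwiching indicator functions between continuous windows. First I would reduce: since $\oplam$ is monotone with respect to inclusion and $W^\circ\subseteq W\subseteq\overline W$, it suffices to prove $\udens(\oplam(K))\le\dens(\cL)\,\theta_H(K)$ for every compact $K\subseteq H$ and $\ldens(\oplam(U))\ge\dens(\cL)\,\theta_H(U)$ for every open precompact $U\subseteq H$.

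The crux I would isolate is the following density formula: for every $h\in\Cc(H)$ with $h\ge 0$ and every van Hove sequence $\cA=(A_k)$ in $G$, the limit of $\frac1{|A_k|}\sum_{x\in L\cap A_k}h(x^\star)$ exists and equals $\dens(\cL)\int_H h\,\dd\theta_H$. Granting this, both bounds fall out by sandwiching. For the upper bound, given $K$ and $\eps>0$, outer regularity of $\theta_H$ and Urysohn's lemma provide $h\in\Cc(H)$ with $1_K\le h$ and $\int_H h\,\dd\theta_H\le\theta_H(K)+\eps$; then $\card(\oplam(K)\cap A_k)\le\sum_{x\in L\cap A_k}h(x^\star)$, so $\udens(\oplam(K))\le\dens(\cL)(\theta_H(K)+\eps)$, and $\eps\downarrow0$ finishes it. For the lower bound, given $U$ and $\eps>0$, inner regularity and Urysohn provide $h\in\Cc(H)$ with $\supp h\subseteq U$, $0\le h\le 1$ and $\int_H h\,\dd\theta_H\ge\theta_H(U)-\eps$; then $\sum_{x\in L\cap A_k}h(x^\star)\le\card(\oplam(U)\cap A_k)$, so $\ldens(\oplam(U))\ge\dens(\cL)(\theta_H(U)-\eps)$.

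To establish the density formula I would pass to the compact abelian group $\TT=(G\times H)/\cL$ and exploit unique ergodicity. The homomorphism $\alpha\colon G\to\TT$, $\alpha(s)=(s,0)+\cL$, has dense range: density of $\pi_H(\cL)$ in $H$ forces the closed subgroup $\overline{\alpha(G)}$ to contain $\{0\}\times H+\cL$, hence all of $\TT$. Thus the translation $G$-action on $\TT$ is minimal and equicontinuous, so it is uniquely ergodic with invariant measure $\theta_\TT$, and consequently $\frac1{|A_k|}\int_{A_k}F(\alpha(-s))\,\dd s\to\int_\TT F\,\dd\theta_\TT$ for every $F\in C(\TT)$ along \emph{every} van Hove sequence. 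I would then fix $f\in\Cc(G)$ with $f\ge0$ and $\int_G f\,\dd\theta_G=1$, and set $F\big((u,v)+\cL\big):=\sum_{(x,y)\in\cL}f(x+u)\,h(y+v)$; this is a well-defined element of $C(\TT)$ because $\cL$ is a lattice and $f\otimes h\in\Cc(G\times H)$, and Weil's formula gives $\int_\TT F\,\dd\theta_\TT=\dens(\cL)\big(\int_G f\,\dd\theta_G\big)\big(\int_H h\,\dd\theta_H\big)=\dens(\cL)\int_H h\,\dd\theta_H$. Since $F(\alpha(-s))=\sum_{x\in L}f(x-s)\,h(x^\star)$, Fubini turns the van Hove average into $\frac1{|A_k|}\sum_{x\in L}h(x^\star)\int_{A_k}f(x-s)\,\dd s$, and $\int_{A_k}f(x-s)\,\dd s$ agrees with $1_{A_k}(x)$ except when $x$ lies in a fixed $\supp f$-neighbourhood of $\partial A_k$; since $\oplam(\overline{\supp h})$ is uniformly discrete, the discrepancy between this average and $\frac1{|A_k|}\sum_{x\in L\cap A_k}h(x^\star)$ is $O\big(\theta_G(\partial^{K'}(A_k))/|A_k|\big)\to 0$ by the van Hove property. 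This proves the formula.

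I expect the heart of the matter to be exactly this last step: because the window $W$ is fixed, the sets $A_k\times W$ do \emph{not} form a van Hove sequence in $G\times H$, so one cannot count points of $\cL$ in $A_k\times W$ by a naive lattice asymptotic; re-routing through unique ergodicity of the $G$-action on $\TT$ is what bypasses this obstruction, and the only genuinely delicate point is the uniform $o(|A_k|)$ control of the boundary error. The reduction, the Urysohn sandwich, and the Weil-formula computation are all routine.
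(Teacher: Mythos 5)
Your proof is correct. Note that the paper does not prove this statement at all --- it is quoted from \cite{HR} --- so there is no internal proof to compare against; but your route (sandwich $1_{W^\circ}\le h\le 1_{\overline W}$ by Urysohn/regularity, reduce to the mean-value formula $\lim_k \frac{1}{|A_k|}\sum_{x\in L\cap A_k}h(x^\star)=\dens(\cL)\int_H h\,\dd\theta_H$ for $h\in\Cc(H)$, and prove that formula via unique ergodicity of the dense-range rotation $s\mapsto (s,0)+\cL$ on $\TT$ together with Weil's formula and a van Hove boundary estimate) is exactly the standard argument, and is the same mean-value fact the paper itself invokes from \cite{CRS} elsewhere (e.g.\ in the proof of Lemma~\ref{l3}). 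All the individual steps check out, including the correct identification of the one delicate point: one cannot count $\cL\cap(A_k\times W)$ directly because $A_k\times W$ is not van Hove in $G\times H$, and the uniform discreteness of $\oplam(\supp h)$ is what turns the smoothing error into $o(|A_k|)$.
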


We will use the following definitions.

\begin{definition} Let $(G,H, \cL)$ be a CPS and let $W \subseteq H$ be precompact.
\begin{itemize}
\item[(a)]If $W$ is compact, we call $\oplam(W)$ a \textbf{weak model set}.
\item[(b)] If $W$ is open, we call $\oplam(W)$ a \textbf{open model set}.
\item[(c)] We say that $\oplam(W)$ has \textbf{maximal density with respect to $\cA$} if
\[
 \dens_{\cA}(\oplam(W)) = \dens(\cL) \theta(\overline{W}) \,.
 \]
 \item[(d)] We say that $\oplam(W)$ has \textbf{minimal density with respect to $\cA$} if
\[
 \dens_{\cA}(\oplam(W)) = \dens(\cL) \theta(W^\circ) \,.
 \]
\end{itemize}
 \end{definition}

\begin{remark}
\begin{itemize}
  \item[(a)] If $\oplam(W)$ has maximal density with respect to $\cA$ then $\oplam(\overline{W})$ is a maximal density weak model set. Such models have been studied in \cite{BHS,KR,KR2,Kel1}.
  \item[(b)] If $\oplam(W)$ has minimal density with respect to $\cA$ then $\oplam(W^\circ)$ is a minimal density open model set. We derive the theory of such models in the next section, from the theory of maximal density weak model sets.
\end{itemize}
\end{remark}

\bigskip
Let us complete this subsection by recalling the following result from \cite{Moody}. This result will play an essential role in the remaining of the paper.

\begin{theorem}\cite[Thm.~1]{Moody}\label{thm:moody} Let $(G,H, \cL)$ be CPS and let $W \subseteq H$ a measurable pre-compact set, and let $\cA$ be a tempered van Hove sequence in $G$. Then, for almost all $(s,t)+\cL \in (G \times H)/ \cL =: \TT$ we have\footnote{Note that in \cite{Moody} the Haar measure on $H$ is renormalized such that $\dens(\cL)=1$.}
\begin{equation}\label{eq: def generic}
\dens_{\cA}(-s+\oplam(t+W))= \lim_{k} \frac{1}{|A_k|} \card( \left(-s+\oplam(t+W)\right) \cap A_k ) = \dens(\cL) \theta_{H}(W) \,.
\end{equation}
\end{theorem}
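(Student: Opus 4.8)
The plan is to derive \eqref{eq: def generic} from Lindenstrauss's pointwise ergodic theorem applied to the translation action of $G$ on the compact group $\TT=(G\times H)/\cL$. Equip $\TT$ with its normalized Haar measure $\theta_{\TT}$ and let $G$ act by $g\cdot\big((s,t)+\cL\big):=(g+s,t)+\cL$, a measure-preserving action. Since $\pi_H(\cL)$ is dense in $H$ (CPS axiom (b)), the orbit $\{(g,0)+\cL:g\in G\}$ is dense in $\TT$, and a translation action of a dense subgroup on a compact abelian group is ergodic: any invariant $f\in L^2(\TT,\theta_{\TT})$ has its Fourier expansion supported on characters trivial on that orbit, hence only on the trivial character, so $f$ is constant. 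For $z=(s,t)+\cL$ write $\Lambda_z:=-s+\oplam(t+W)$; by \eqref{eq:translate oplam(W)} this is well defined on $\TT$, it satisfies $g+\Lambda_z=\Lambda_{g\cdot z}$, and $|A_k|^{-1}\card(\Lambda_z\cap A_k)$ is exactly the quantity in \eqref{eq: def generic}.

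Fix $\phi\in\Cc(G)$ with $\phi\ge 0$, $\supp(\phi)\subseteq V$ for a fixed compact symmetric neighborhood $V$ of $0$, and $\int_G\phi\,\dd\theta_G=1$; set $F(z):=\sum_{y\in\Lambda_z}\phi(y)$. A key preliminary is that the family $\{\Lambda_z\}_{z\in\TT}$ is uniformly discrete with one common separation constant: $\Lambda_z-\Lambda_z\subseteq\{y\in L:y^\star\in W-W\}\subseteq\oplam(\overline{W}-\overline{W})$, and the right-hand side is uniformly discrete since $\overline{W}-\overline{W}$ is compact. Hence $F$ is bounded, so $F\in L^1(\TT,\theta_{\TT})$, and $F$ is clearly measurable. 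Unfolding the lattice sum, $F$ is the $\cL$-periodization of $\phi^\dagger\otimes 1_{-W}$ on $G\times H$, whence Weil's formula gives $\int_{\TT}F\,\dd\theta_{\TT}=\dens(\cL)\,\theta_H(W)$ (with the normalization of $\theta_H$ in \cite{Moody}, $\dens(\cL)=1$). Moreover a change of variables gives $\int_{A_k}F(g\cdot z)\,\dd\theta_G(g)=\sum_{y\in\Lambda_z}(\phi*1_{A_k})(y)$.

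Now $G$ is $\sigma$-compact, $\cA$ is a tempered van Hove sequence, and the action is ergodic, so Lindenstrauss's pointwise ergodic theorem applies to $F$: for $\theta_{\TT}$-almost every $z$,
\[
\frac{1}{|A_k|}\sum_{y\in\Lambda_z}(\phi*1_{A_k})(y)\;\longrightarrow\;\int_{\TT}F\,\dd\theta_{\TT}=\dens(\cL)\,\theta_H(W)\qquad(k\to\infty).
\]
Finally I replace $\phi*1_{A_k}$ by $1_{A_k}$. The two functions agree off the $V$-boundary $\partial^V(A_k)$ and take values in $[0,1]$, so $\big|\card(\Lambda_z\cap A_k)-\sum_{y\in\Lambda_z}(\phi*1_{A_k})(y)\big|\le\card\big(\Lambda_z\cap\partial^V(A_k)\big)$. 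The uniform discreteness above yields $\card(\Lambda_z\cap E)\le\theta_G(E+V_0)/\theta_G(V_0)$ for a fixed compact $V_0$ and all Borel $E$, so this error is $\le\theta_G\big(\partial^V(A_k)+V_0\big)/\theta_G(V_0)=o(|A_k|)$, since $\partial^V(A_k)+V_0$ lies in a $K$-boundary of $A_k$ for a fixed compact $K$ and $\cA$ is van Hove. Dividing by $|A_k|$ and letting $k\to\infty$ gives $|A_k|^{-1}\card(\Lambda_z\cap A_k)\to\dens(\cL)\theta_H(W)$ for $\theta_{\TT}$-almost all $z=(s,t)+\cL$, which is \eqref{eq: def generic}.

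The step I expect to be the crux is the last one: moving from the smoothed average $\sum_{y}(\phi*1_{A_k})(y)$, to which the ergodic theorem speaks directly, to the sharp counting function $\card(\Lambda_z\cap A_k)$. Both hypotheses are essential here — the tempered van Hove property (needed to invoke Lindenstrauss and again to make $\partial^V(A_k)$ negligible) and the precompactness of $W$ (which is what makes all the $\Lambda_z$ uniformly discrete, turning the measure bound on $\partial^V(A_k)$ into a bound on the number of points of $\Lambda_z$ there). The remaining ingredients — ergodicity of $\theta_{\TT}$, the normalization constant in Weil's formula, and the measurability of $F$ on $\TT$ — are routine.
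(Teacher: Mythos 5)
The paper does not actually prove this statement; it is imported verbatim from \cite[Thm.~1]{Moody}, so there is no internal proof to compare against. Your argument is a correct reconstruction of the standard proof (and essentially of Moody's own): ergodicity of the $G$-translation action on $\TT$ via density of $\pi_H(\cL)$, Lindenstrauss's pointwise ergodic theorem along the tempered van Hove sequence applied to the $\cL$-periodization of $\phi^\dagger\otimes 1_{-W}$, Weil's formula to identify the spatial mean as $\dens(\cL)\theta_H(W)$, and the van Hove boundary estimate --- powered by the uniform discreteness of all the $\Lambda_z$, which is exactly where precompactness of $W$ enters --- to pass from the smoothed count $\sum_y(\phi*1_{A_k})(y)$ to the sharp count $\card(\Lambda_z\cap A_k)$. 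All the individual steps check out. The only slip is cosmetic: with your action $g\cdot((s,t)+\cL)=(g+s,t)+\cL$ one has $g+\Lambda_z=\Lambda_{(-g)\cdot z}$ rather than $\Lambda_{g\cdot z}$, so the unfolding identity produces $1_{-A_k}$ in place of $1_{A_k}$; either build the sign into the action or observe that $(-A_k)_k$ is again a tempered van Hove sequence, after which the argument goes through unchanged.
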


\begin{remark}  Theorem~\ref{thm:moody} says that given a CPS an open set $U \subseteq H$ and a compact set $W$, for almost all $(s,t)+\cL \in \TT$ the set
$\oplam(t+U)$ is a minimal density open model set and $\oplam(t+W)$ is a maximal density weak model set.
\end{remark}

\subsection{Almost periodic functions and measures}

In this subsection we briefly review the concepts of almost periodicity which we will use in the paper. For this entire subsection $\cA$ a fixed van Hove sequence.

First let us recall that a function $f \in \Cu(G)$ is called \textbf{Bohr almost periodic} if, for each $\eps>0$ the set
\[
P_\eps(f):= \{ t \in G : \| T_tf-f\|_\infty < \eps \}
\]
is relatively dense. A measure $\mu$ is called \textbf{strong almost periodic} if for all $f \in \Cc(G)$ the function $f*\mu$ is Bohr almost periodic. We denote the spaces of Bohr almost periodic functions by $SAP(G)$, and the space of strong almost periodic measures by $\SAP(G)$. For the basic properties of these measures we refer the reader to \cite{ARMA,MoSt}.

\smallskip

Next, let us review the larger classes of Besicovitch almost periodic functions and measures. For $f \in L^1_{loc}(G)$ we define
\[
\|f\|_{b,1, \cA} = \limsup_k \frac{1}{|A_k|} \int_{A_k} |f(t)| \dd t \,.
\]
For a measure $\mu \in \cM^\infty$ we will denote by
\[
\uM_{\cA}(|\mu|):= \limsup_k \frac{\left| \mu \right(A_k)}{|A_k|} \,.
\]

Note here that for $f \in \Cc(G)$ and $\mu \in \cM^\infty(G)$ we have
\begin{displaymath}
\| f*\mu \|_{b,1,\cA}\leq \uM_{\cA}(|\mu|) \|f \|_1 \,.
\end{displaymath}

Now, let us recall the concept of Besicovitch almost periodicity for functions and measures. For more details we refer the reader to \cite{LSS,LSS2}.

\begin{definition} A function $f \in L^1_{loc}(G)$ is called \textbf{Besicovitch almost periodic with respect to $\cA$} if, for each $\eps >0$ there exists a trigonometric polynomial $P$ such that
\[
\| f-P \|_{b,1,\cA} < \eps \,.
\]
We denote the space of Besicovitch almost periodic functions by $Bap_{\cA}(G)$.

A measure $\mu \in \cM^\infty(G)$ is called \textbf{Besicovitch almost periodic with respect to $\cA$} if for all $f \in \Cc(G)$ we have $\mu*f \in Bap_{\cA}(G)$.
The space of Besicovitch almost periodic measures is denote by $\Bap_{\cA}(G)$.
\end{definition}

\begin{remark} On the space $Bap_{\cA}(G)$ we have \cite{LSS}
\[
\| f\|_{b,1, \cA}=  \lim_k \frac{1}{|A_k|} \int_{A_k} |f(t)| \dd t \,.
\]

\end{remark}

\medskip

Let us now recall the following result about Fourier--Bohr coefficients.

\begin{proposition}\label{prop: FB for besicovitch} \cite{LSS}
\begin{itemize}
  \item[(a)] Let $f \in Bap_{\cA}(G)$ and $\chi \in \widehat{G}$. Then, the \textbf{ Fourier--Bohr coefficient}
  \[
  a_{\chi}^\cA(f):= \lim_k \frac{1}{|A_k|} \int_{A_k} \overline{\chi(t)}f(t) \dd t
  \]
  exists.
  \item[(b)] Let $\mu \in \Bap_{\cA}(G)$ and $\chi \in \widehat{G}$. Then, the \textbf{ Fourier--Bohr coefficient}
  \[
  a_{\chi}^\cA(\mu):= \lim_k \frac{1}{|A_k|} \int_{A_k} \overline{\chi(t)} \dd \mu(t)
  \]
  exists.
\end{itemize}
\end{proposition}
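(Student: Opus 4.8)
The plan is to establish the claim first for characters and trigonometric polynomials, then deduce part (a) from the defining $\|\cdot\|_{b,1,\cA}$-approximation of Besicovitch almost periodic functions, and finally reduce part (b) to part (a) by smoothing the measure with a compactly supported bump. The basic building block is the elementary fact that for every $\eta\in\widehat{G}$ the averages $\frac1{|A_k|}\int_{A_k}\eta(t)\,\dd t$ converge, to $1$ if $\eta$ is the trivial character and to $0$ otherwise. The trivial case is obvious; if $\eta$ is nontrivial, pick $s\in G$ with $\eta(s)\neq 1$. Since $\eta(x+s)=\eta(s)\eta(x)$ and $\theta_G$ is translation invariant, $\eta(s)\int_{A_k}\eta=\int_{A_k+s}\eta$, so $(1-\eta(s))\int_{A_k}\eta=\int_{A_k}\eta-\int_{A_k+s}\eta$, which in absolute value is at most $\theta_G\bigl(A_k\triangle(A_k+s)\bigr)=o(|A_k|)$ by the van Hove property (the symmetric difference is contained in a $K$-boundary $\partial^K(A_k)$ for the fixed compact set $K=\{0,s,-s\}$); dividing by $|A_k|$ and cancelling $1-\eta(s)\neq 0$ gives the claim. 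Hence for a trigonometric polynomial $P=\sum_j c_j\xi_j$ with distinct $\xi_j\in\widehat{G}$ we have $\overline{\chi}P=\sum_j c_j\,(\xi_j\chi^{-1})$, and therefore $a_\chi^\cA(P)=\lim_k\frac1{|A_k|}\int_{A_k}\overline{\chi}P$ exists and equals the coefficient of $\chi$ in $P$ (or $0$ if $\chi$ does not occur among the $\xi_j$).

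For part (a), let $f\in Bap_\cA(G)$ and $\eps>0$, and pick a trigonometric polynomial $P$ with $\|f-P\|_{b,1,\cA}<\eps$. As this is a $\limsup$, there is $k_0$ with $\frac1{|A_k|}\int_{A_k}|f-P|<\eps$ for every $k\ge k_0$, whence for $k,l\ge k_0$
\[
\left|\frac1{|A_k|}\int_{A_k}\overline{\chi}f-\frac1{|A_l|}\int_{A_l}\overline{\chi}f\right|\le 2\eps+\left|\frac1{|A_k|}\int_{A_k}\overline{\chi}P-\frac1{|A_l|}\int_{A_l}\overline{\chi}P\right| .
\]
By the first paragraph the last term tends to $0$ as $k,l\to\infty$, so $\bigl(\frac1{|A_k|}\int_{A_k}\overline{\chi}f\bigr)_k$ is a Cauchy sequence and hence converges; this is part (a).

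For part (b), fix $\chi\in\widehat{G}$ and choose $f\in\Cc(G)$ with $f\ge 0$, $f\not\equiv 0$ and $\supp(f)$ contained in a neighbourhood of $0$ on which $\re\overline{\chi}>\tfrac12$; then $\re\widehat{f}(\chi)>\tfrac12\|f\|_1>0$, so $\widehat{f}(\chi)\neq 0$. Since $\mu\in\Bap_\cA(G)$ we have $\mu*f\in Bap_\cA(G)$, so part (a) gives that $a_\chi^\cA(\mu*f)=\lim_k\frac1{|A_k|}\int_{A_k}\overline{\chi}\,(\mu*f)$ exists. On the other hand, Fubini together with the substitution $y=x-t$ yields
\[
\frac1{|A_k|}\int_{A_k}\overline{\chi(x)}(\mu*f)(x)\,\dd x=\frac1{|A_k|}\int_G\overline{\chi(t)}\Bigl(\int_{A_k-t}\overline{\chi(y)}f(y)\,\dd y\Bigr)\dd\mu(t),
\]
and the inner integral equals $\widehat{f}(\chi)$ once $t+\supp(f)\subseteq A_k$, vanishes once $(A_k-t)\cap\supp(f)=\emptyset$, and is bounded by $\|f\|_1$ for the remaining $t$, which form a set contained in the $K$-boundary $\partial^K(A_k)$ for the fixed compact $K:=\supp(f)\cup(-\supp(f))\cup\{0\}$. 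Using the standard estimate that $\frac{|\mu|(\partial^K(A_k))}{|A_k|}\to 0$ for a translation bounded $\mu$ and a van Hove sequence, this gives
\[
\frac1{|A_k|}\int_{A_k}\overline{\chi(x)}(\mu*f)(x)\,\dd x=\widehat{f}(\chi)\,\frac1{|A_k|}\int_{A_k}\overline{\chi(t)}\,\dd\mu(t)+o(1) .
\]
Since the left-hand side converges and $\widehat{f}(\chi)\neq 0$, we conclude that $\frac1{|A_k|}\int_{A_k}\overline{\chi}\,\dd\mu$ converges, to $a_\chi^\cA(\mu*f)/\widehat{f}(\chi)$; this is part (b).

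I expect the only genuinely technical point to be the boundary estimate invoked in part (b): one must combine the van Hove property of $\cA$ with the translation boundedness of $\mu$ to control the $|\mu|$-mass of the ``collar'' $\{t:(A_k-t)\cap\supp(f)\neq\emptyset\text{ but }t+\supp(f)\not\subseteq A_k\}$, showing it is $o(|A_k|)$; and one must be careful to pick the smoothing $f$ with $\widehat{f}(\chi)\neq 0$, which is exactly what allows the relation between the Fourier--Bohr averages of $\mu$ and of $\mu*f$ to be inverted. The remaining ingredients — the character computation and the Cauchy argument — are soft.
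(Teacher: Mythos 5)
Your proof is correct. Note that the paper does not actually prove this proposition; it is imported verbatim from the reference [LSS], so there is no in-paper argument to compare against. Your route is the standard one and, as far as I can tell, essentially the one used in [LSS]: the mean of a nontrivial character over a van Hove sequence vanishes (your translation/symmetric-difference argument is the right way to see this), hence Fourier--Bohr coefficients of trigonometric polynomials exist, and the Cauchy argument transfers existence to all of $Bap_{\cA}(G)$ via the defining $\| \cdot \|_{b,1,\cA}$-approximation; the measure case then reduces to the function case by convolving with a bump $f$ chosen so that $\widehat{f}(\chi)\neq 0$. The two technical points you flag are genuine but both check out: the ``collar'' $\{t : (t+\supp f)\cap A_k\neq\emptyset,\ t+\supp f\not\subseteq A_k\}$ (and likewise the symmetric difference between $A_k$ and $\{t: t+\supp f\subseteq A_k\}$) is indeed contained in $\partial^{K}(A_k)$ for $K=\supp f\cup(-\supp f)\cup\{0\}$, and the estimate $|\mu|(\partial^K(A_k))=o(|A_k|)$ for translation bounded $\mu$ along a van Hove sequence is a standard covering argument. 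The only cosmetic remark is that your choice of $f$ supported where $\re\overline{\chi}>\tfrac12$ is a clean way to guarantee $\widehat{f}(\chi)\neq 0$, which is exactly the invertibility you need.
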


\medskip

In this paper we will be interested in pointsets $\Lambda$ for which $\delta_{\Lambda} \in \Bap_{\cA}(G)$. Because of this we introduce the following notation:
\[
\Bap_{\cA, ps}(G):= \{ \Lambda \subseteq G : \delta_{\Lambda} \in \Bap_{\cA}(G) \} \,.
\]

\subsection{Dynamical systems}
We complete this section by reviewing few basic facts about Dynamical Systems and ergodic measures.

\smallskip

By a \textbf{(topological) dynamical system} we mean a compact topological space $X$ together with a continuous group action $\alpha : G \times X \to X$.

A measure $m$ on $X$ is called $G$-invariant if for all $f \in C(X)$ and all $t \in G$ we have $m(T_tf)=m(f)$. Here $T_t(f)(x)= f(\alpha(-t,x))$.
A probability $G$-invariant measure $m$ is called \textbf{ergodic} if all measurable sets $B \subseteq X$ which are $G$-invariant satisfy $m(B) \in \{0,1\}$.

Ergodic measures always exist on topological dynamical systems. Given an ergodic measure $m$ on a topological dynamical system, we will refer to the triple
$(X, G, m)$ as an ergodic dynamical system.

\smallskip

Of special interest to us will be dynamical systems of translation bounded measures. Let us start by recalling the following result of \cite{SS1} (compare \cite{BL}).

\begin{lemma}\cite{SS1} Let $\XX \subseteq \cM^\infty(G)$ be any $G$ invariant vaguely closed set. Then, $\XX$ is vaguely compact if and only if it there exists some $C$ and a compact set $K \subseteq G$ with non-empty interior such that
\[
\XX \subseteq \{ \mu \in \cM^\infty(G): \| \mu \|_{K} \leq C \} =: \cM_{C,K}(G) \,.
\]

Moreover, in this case the vague topology on $\XX$ is metrisable.
\end{lemma}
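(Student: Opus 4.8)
The statement is a characterization of vague compactness for $G$-invariant vaguely closed subsets $\XX\subseteq\cM^\infty(G)$, together with metrisability. The plan is to prove the two implications separately, and then handle metrisability.

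For the ``if'' direction, suppose $\XX\subseteq\cM_{C,K}(G)$ for some constant $C$ and compact $K$ with nonempty interior. First I would observe that $\cM_{C,K}(G)$ is vaguely closed: for each $f\in\Cc(G)$ with $\supp f$ contained in a translate $t+K$ and $|f|\le 1$, the condition $|\mu(f)|\le C$ passes to vague limits, and by the definition of the total variation measure $|\mu|(t+K)=\sup\{|\mu(f)|:f\in\Cc(G),\ |f|\le 1_{t+K}\}$ (using inner regularity to pass from open to compact supports), so the bound $\|\mu\|_K\le C$ is preserved under vague limits; hence $\cM_{C,K}(G)$ is vaguely closed. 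Next I would show $\cM_{C,K}(G)$ is vaguely compact. The key point is that for each fixed $f\in\Cc(G)$ with $\supp f\subseteq t_1+K\cup\cdots\cup t_n+K$, the evaluation $\mu\mapsto\mu(f)$ is bounded on $\cM_{C,K}(G)$ by $nC\|f\|_\infty$ (covering $\supp f$ by finitely many translates of $K$ and using translation boundedness with a partition of unity), so $\cM_{C,K}(G)$ embeds into the product $\prod_{f\in\Cc(G)}\overline{D(0,nC\|f\|_\infty)}$ of closed disks, which is compact by Tychonoff; the vague topology is exactly the subspace topology from this product, and the image is closed there (linearity in $f$ and the bounds are closed conditions), so $\cM_{C,K}(G)$ is vaguely compact. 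Then $\XX$, being a vaguely closed subset of a vaguely compact set, is vaguely compact.

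For the ``only if'' direction, suppose $\XX$ is vaguely compact. Fix any compact $K$ with nonempty interior and choose $g\in\Cc(G)$ with $0\le g\le 1$ and $g\equiv 1$ on $K$. For each $\mu\in\XX$, translation boundedness of $\mu$ gives $|\mu|(K)\le(|\mu|*\tilde g^{\dagger})(0)<\infty$ pointwise, but I need a \emph{uniform} bound over $\XX$. Here I would use that $\mu\mapsto|\mu|(g)$ — more precisely $\mu\mapsto\sup\{|\mu(h)|:h\in\Cc(G),|h|\le g\}$ — is vaguely lower semicontinuous (a supremum of continuous functions $\mu\mapsto|\mu(h)|$) and finite everywhere on $\XX$. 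A lower semicontinuous function on a compact space need not be bounded, so instead I would argue: for each $n$, the set $\XX_n:=\{\mu\in\XX:|\mu|(g)\le n\}$ is vaguely closed (it is $\{\mu:|\mu(h)|\le n\ \forall h\in\Cc(G),|h|\le g\}$, an intersection of closed sets), and $\bigcup_n\XX_n=\XX$ by finiteness; by the Baire category theorem on the compact (hence Baire) space $\XX$, some $\XX_n$ has nonempty interior, and then $G$-invariance of $\XX$ together with the fact that the orbit of a vaguely open set covers $\XX$ (a standard compactness argument, or using that for measures the translation action is such that the whole space is a single ``orbit closure'' of any open piece up to bounded overlap) upgrades this to a global bound $\sup_{\mu\in\XX}|\mu|(K)\le\sup_{\mu\in\XX}|\mu|(g)=:C<\infty$. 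I expect this Baire-category-plus-invariance step to be the main obstacle, since making the passage from ``bounded on an open set'' to ``bounded on all of $\XX$'' precise requires care about how $G$ acts and uses van Hove / compactness structure. Once $C$ is found, $\|\mu\|_K\le C$ for all $\mu\in\XX$, i.e. $\XX\subseteq\cM_{C,K}(G)$.

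For metrisability, once $\XX\subseteq\cM_{C,K}(G)$ I would exhibit a countable family of functions determining the vague topology: since $G$ is second countable, $\Cc(G)$ is separable in the inductive limit topology (equivalently, there is a countable set $\{f_j\}\subseteq\Cc(G)$ dense in $\Cc(G)$ uniformly on compacta with controlled supports), and the uniform bound $|\mu(f)|\le(\text{const})\|f\|_\infty$ on $\XX$ shows that the maps $\mu\mapsto\mu(f)$ for $f$ ranging over $\{f_j\}$ already separate points of $\XX$ and generate the vague topology; hence $d(\mu,\nu):=\sum_j 2^{-j}\frac{|\mu(f_j)-\nu(f_j)|}{1+|\mu(f_j)-\nu(f_j)|}$ is a metric inducing the vague topology on $\XX$. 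This completes the proof.
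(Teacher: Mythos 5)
The paper does not prove this lemma --- it is quoted from \cite{SS1} --- so there is no internal proof to compare against; I can only judge your argument on its own merits. The ``if'' direction and the metrisability argument are essentially fine (the standard Tychonoff embedding and the countable-seminorm argument; the only quibble is that $\cM_{C,K}(G)$ need not be \emph{exactly} vaguely closed, since $|\mu|(t+K)=\sup\{|\mu(f)|: |f|\le 1_{t+K}\}$ fails when $|\mu|$ charges $\partial(t+K)$, but the vague closure of $\cM_{C,K}(G)$ sits inside $\cM_{C',K}(G)$ for a larger constant, which is all you need).

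The genuine gap is in the ``only if'' direction, exactly where you flagged it. Applying Baire category to the compact space $\XX$ gives you a nonempty vaguely open $O\subseteq\XX$ with $\sup_{\mu\in O}|\mu|(g)\le n$, but the $G$-orbit $\bigcup_{t\in G}T_tO$ is merely an open $G$-invariant subset of $\XX$; its complement is a closed invariant set that can be nonempty (take $\XX$ a disjoint union of two invariant compact pieces, or any non-minimal system), so ``the orbit of an open set covers $\XX$'' is false in general and the upgrade to a global bound does not go through. The fix is to apply the uniform boundedness principle on the right space: fix a compact $K'$ with $K\subseteq (K')^{\circ}$ and view each $\mu\in\XX$ as a bounded linear functional on the Banach space $C_{K'}(G)=\{f\in\Cc(G):\supp f\subseteq K'\}$ with the sup norm. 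For each fixed $f$ the map $\mu\mapsto\mu(f)$ is vaguely continuous, hence bounded on the compact set $\XX$, so the family is pointwise bounded; Banach--Steinhaus then gives $M:=\sup_{\mu\in\XX}\sup\{|\mu(f)|:\|f\|_{\infty}\le 1,\ \supp f\subseteq K'\}<\infty$, whence $|\mu|(K)\le M$ for all $\mu\in\XX$. Only now does $G$-invariance enter, and in a much weaker form than you need it: $|\mu|(t+K)=|T_{-t}\mu|(K)\le M$ because $T_{-t}\mu\in\XX$, so $\|\mu\|_{K}\le M$ uniformly on $\XX$. Note that invariance is genuinely needed somewhere --- $\{n\delta_{n}:n\in\NN\}\cup\{0\}$ is vaguely compact in $\cM^{\infty}(\RR)$ but not uniformly translation bounded --- but its correct role is to translate a bound at the fixed compact $K$ to all of $G$, not to propagate a bound from an open subset of $\XX$ to all of $\XX$.
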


This leads to the following natural definition
\begin{definition} \cite{BL} A pair $(\XX, G)$ is called a \textbf{dynamical system on
the translation bounded measures} on $G$ (TMDS) if $\XX$ is a vaguely compact set of translation bounded measures on $G$.
\end{definition}

\bigskip

Next, let us review the following definition, which is inspired by the general Birkhoff ergodic theorem for continuous group actions \cite{Cha,Tem}.

\begin{definition} Let $(X,G,m)$ be an ergodic dynamical system and let $\cA$ be a van Hove sequence. An element $x \in X$ is called \textbf{generic} for $m$ if
\begin{displaymath}
\lim_{m} \frac{1}{|A_k|} \int_{A_k} f(T_t x) \dd t = \int_{X} f(y) \dd m(y)
\end{displaymath}
holds for all $f \in C(X)$.
\end{definition}

We will need the following simple characterisations of generic elements. This has been used implicitly in  \cite{BL,LSS,LS,LSS2} just to name a few. Since the result is well known and trivial to prove we skip the proof.

\begin{proposition}\label{prop:generic char2} Let $(X, G, m)$ be an ergodic dynamical system. Then, $x \in X$ is generic for $m$ if and only if the set $f \in C(X)$ for which
\[
\int_{X} f(y) \dd m(y)= \lim_m \frac{1}{|A_m|} \int_{A_m} f(T_tx) \dd t
\]
is dense in $C(X)$.
\end{proposition}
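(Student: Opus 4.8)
The plan is to prove the two implications separately; the forward direction is immediate from the definition, and the converse is a routine closedness/density argument for uniformly bounded linear functionals. First I would dispose of the ``only if'' part: if $x$ is generic for $m$, then by definition $\lim_k \frac{1}{|A_k|}\int_{A_k} f(T_tx)\,\dd t = \int_X f\,\dd m$ for \emph{every} $f \in C(X)$, so the set in question is all of $C(X)$, which is trivially dense.

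For the converse, write $D$ for the set of $f \in C(X)$ for which the limit $\lim_k \frac{1}{|A_k|}\int_{A_k} f(T_tx)\,\dd t$ exists and equals $\int_X f\,\dd m$, and set $S_k(f):= \frac{1}{|A_k|}\int_{A_k} f(T_tx)\,\dd t$. The key observations are the uniform bounds $|S_k(f)| \le \|f\|_\infty$ for all $k$ (since $|f(T_tx)| \le \|f\|_\infty$ and $\frac{1}{|A_k|}\int_{A_k}\dd t = 1$) and $\bigl|\int_X f\,\dd m\bigr| \le \|f\|_\infty$ (as $m$ is a probability measure). I would then show $D$ is closed in $(C(X),\|\cdot\|_\infty)$: given $f_n \in D$ with $f_n \to f$ uniformly, estimate
\[
\Bigl| S_k(f) - \textstyle\int_X f\,\dd m \Bigr| \le |S_k(f-f_n)| + \Bigl| S_k(f_n) - \textstyle\int_X f_n\,\dd m \Bigr| + \Bigl| \textstyle\int_X (f_n - f)\,\dd m \Bigr| \le 2\|f-f_n\|_\infty + \Bigl| S_k(f_n) - \textstyle\int_X f_n\,\dd m \Bigr| .
\]
Given $\eps>0$, fix $n$ with $\|f-f_n\|_\infty < \eps/3$; since $f_n \in D$ there is $K$ with $\bigl|S_k(f_n) - \int_X f_n\,\dd m\bigr| < \eps/3$ for $k \ge K$, whence $\bigl|S_k(f) - \int_X f\,\dd m\bigr| < \eps$ for $k \ge K$. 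Thus $f \in D$, so $D$ is closed; being also dense by hypothesis, $D = C(X)$, i.e. $x$ is generic for $m$.

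I do not expect a genuine obstacle here — the only point requiring a little care is the order of quantifiers in the closedness step (first fix $n$ to control the approximation error, then send $k \to \infty$). It is worth remarking that neither ergodicity of $m$ nor the tempered/van Hove structure of $\cA$ plays any role in this particular lemma: the same argument applies verbatim whenever one has a uniformly bounded family of linear functionals on $C(X)$ converging, on a dense subset, to a fixed bounded functional.
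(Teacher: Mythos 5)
Your proof is correct: the forward direction is immediate, and the converse is the standard closedness argument using the uniform bounds $|S_k(f)|\le\|f\|_\infty$ and $|\int_X f\,\dd m|\le\|f\|_\infty$, with the quantifiers handled in the right order. The paper explicitly omits the proof as ``well known and trivial,'' and what you have written is precisely the expected argument (your closing remark that ergodicity and the van Hove structure are irrelevant here is also accurate).
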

%
%

%



As an immediate consequence we get:

\begin{proposition}\label{prop:generic char} Let $(\XX, G, m)$ be an ergodic TMDS consisting of real valued measures. Then, $\mu \in \XX$ is generic for $m$ if and only if, for all $n \in \NN$ and $\varphi_1,..., \varphi_n \in \Cc(G)$ we have
\begin{equation}\label{eq:generic}
\lim_{m} \frac{1}{|A_m|} \int_{A_m} \mu*\varphi_1(t) \mu*\varphi_2(t) \cdot \ldots \cdot \mu*\varphi_n(t) \dd t = \int_{\XX(B)} f_{\varphi_1}(\omega) f_{\varphi_2}(\omega)  \cdot \ldots \cdot f_{\varphi_n}(\omega) \dd m(\omega) \,.
\end{equation}
\end{proposition}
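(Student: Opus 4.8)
The plan is to derive this from Proposition~\ref{prop:generic char2} by exhibiting a dense subset of $C(\XX)$ on which the ergodic averages converge to the space average. The natural candidate is the algebra generated by the functions $f_\varphi : \XX \to \RR$, $f_\varphi(\omega) := \omega*\varphi(0) = \int_G \varphi(-t)\,\dd\omega(t)$ for $\varphi \in \Cc(G)$. First I would note that each $f_\varphi$ is continuous on $\XX$: this is exactly the statement that the vague topology on a set of uniformly translation bounded measures makes $\omega \mapsto \omega*\varphi(0)$ continuous, which follows from the metrisability lemma of \cite{SS1} quoted above (convolution with a fixed $\Cc(G)$ function is vaguely continuous on $\cM_{C,K}(G)$). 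Since $\XX$ consists of real valued measures, the $f_\varphi$ take real values, and they separate points of $\XX$: if $\omega_1 \neq \omega_2$ then $\omega_1*\varphi(0) \neq \omega_2*\varphi(0)$ for some $\varphi \in \Cc(G)$ after a translation, and translation of $\varphi$ is again in $\Cc(G)$, so the family $\{f_\varphi : \varphi \in \Cc(G)\}$ separates points. They also contain the constants only in the trivial sense, but one checks $1 \in \Cc(G)$-span is not needed since the Stone--Weierstrass argument below uses the unital algebra generated by them together with $1$.

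Next I would invoke the Stone--Weierstrass theorem: the unital $\RR$-algebra $\mathcal{P}$ generated by $\{f_\varphi : \varphi \in \Cc(G)\}$ is dense in $C(\XX, \RR)$, hence (allowing complex coefficients) dense in $C(\XX)$. A generic element of $\mathcal{P}$ is a finite linear combination of products $f_{\varphi_1} f_{\varphi_2} \cdots f_{\varphi_n}$ (with the empty product being the constant $1$). Now observe that for $\mu \in \XX$ and $t \in G$ one has $f_{\varphi_j}(T_t\mu) = (T_t\mu)*\varphi_j(0) = \mu*\varphi_j(t)$, directly from the definitions of $T_t$ and convolution, so
\[
\bigl(f_{\varphi_1} f_{\varphi_2} \cdots f_{\varphi_n}\bigr)(T_t\mu) = \mu*\varphi_1(t)\,\mu*\varphi_2(t)\cdots\mu*\varphi_n(t) \,.
\]
Therefore the hypothesis \eqref{eq:generic}, together with linearity of the ergodic average and of the integral $m$, says precisely that every element of $\mathcal{P}$ lies in the set of $f \in C(\XX)$ for which $\int_\XX f\,\dd m = \lim_m \frac{1}{|A_m|}\int_{A_m} f(T_t\mu)\,\dd t$. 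Since $\mathcal{P}$ is dense, Proposition~\ref{prop:generic char2} gives that $\mu$ is generic for $m$. Conversely, if $\mu$ is generic for $m$ then the averaging identity holds for every $f \in C(\XX)$, in particular for each product $f_{\varphi_1}\cdots f_{\varphi_n} \in C(\XX)$, which is \eqref{eq:generic}; this direction is immediate.

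The only genuine point requiring care — and the main obstacle, such as it is — is the interchange needed to pass from "convergence on a dense set of $C(\XX)$" to "convergence on all of $C(\XX)$", i.e. the content of Proposition~\ref{prop:generic char2}; but that is quoted as known, resting on the uniform bound $\sup_k \frac{1}{|A_k|}\int_{A_k} |f(T_t\mu)|\,\dd t \le \|f\|_\infty$ which makes the functionals $f \mapsto \liminf$ and $f \mapsto \limsup$ of the averages uniformly continuous on $C(\XX)$. One should also double-check the side conditions of Stone--Weierstrass: $\XX$ is compact (it is a TMDS) and metrisable by the lemma of \cite{SS1}, hence $C(\XX)$ is a well-behaved Banach algebra, and the point-separation of $\{f_\varphi\}$ uses only properties (a) density of measures is determined by their action on $\Cc(G)$, which is standard. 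Everything else is bookkeeping with the definitions of $*$, $T_t$, and $f_\varphi$.
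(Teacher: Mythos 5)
Your argument is correct and is essentially the paper's own proof: both apply the Stone--Weierstra{\ss} theorem to the unital algebra generated by $\{ f_\varphi : \varphi \in \Cc(G)\}$, use the real-valuedness of the measures to handle conjugation (the paper via $\overline{f_\varphi}=f_{\overline{\varphi}}$, you via the real algebra and complexification --- the same observation), and then conclude with Proposition~\ref{prop:generic char2} together with the identity $f_{\varphi}(T_t\mu)=\mu*\varphi(t)$. No further comment is needed.
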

\begin{proof}
$\Longrightarrow$: is obvious.

$\Longleftarrow$: Since all measures are real valued, we have $\overline{f_{\varphi}(\omega)}=f_{\overline{\varphi}}(\omega)$ for all $\varphi \in \Cc(G)$ and $\omega \in \XX(B)$.

Therefore, the algebra $\AAA$ generated by $\{ f_ \varphi : \varphi  \in \Cc(G) \} \cup \{1_{\XX} \}$ is a complex algebra which clearly separates the points of $\XX$. Stone--Weiertra{\ss} theorem implies then that $\AAA$ is dense in $C(\XX)$.

Since \eqref{eq: def generic} trivially holds for $1_{\XX}$, it also holds for all $f \in \AAA$ by \eqref{eq:generic}. The claim follows now by the density of $\AAA$ in $C(\XX)$ and Prop~\ref{prop:generic char2}.

\end{proof}

Finally, we will need the notion of a Borel factor.

\begin{definition} Let $(X,G, m)$ and $(X', G, m')$ be two ergodic dynamical system. We say that $(X', G, m')$ is a \textbf{Borel factor} of $(X,G,m)$ if there exists sets $Y \subseteq X$ and $Y ' \subseteq Y$ of full measure, and a $G$-mapping $f: X' \to Y'$ such that $m'$ is the push forward of $m$ via $f$, that is
\[
\int_{X'} g(t) \dd m'(t) = \int_{X} g(f(t)) \dd m(t) \qquad \forall g \in C(X) \,.
\]
\end{definition}

\subsection{The rubber topology and the extended hull}

Consider the set $\mathcal{UD}(G)$ of uniformly discrete subsets of $G$. We have a natural embedding \cite{BL} $i : \mathcal{UD}(G) \hookrightarrow \cM^\infty(G)$ defined by
\[
i(\Lambda):= \delta_{\Lambda}= \sum_{x \in \Lambda} \delta_{x} \,.
\]

The topology induced by this embedding on $\mathcal{UD}(G)$ from the vague topology on $\cM^\infty(G)$ is called the \textbf{local rubber topology}. One can define equivalently this topology by saying that two sets $\Lambda, \Gamma \in \mathcal{UD}(G)$ are closed if they "almost" agree on large compact sets (see \cite{BL} for details).

\smallskip
Let us recall next the following result\cite{BL}.
\begin{lemma}\cite[Prop~4]{BL} Let $0 \in V \subseteq G$ be an open neighbourhood, and let $\mathcal{D}_{V}(G)$ be the set of $V$-uniformly discrete subsets of $G$. Then
$\mathcal{D}_{V}(G)$ is compact in the local rubber topology and $i(\mathcal{D}_{V}(G))$ is vaguely compact in $\cM^\infty(G)$.
\end{lemma}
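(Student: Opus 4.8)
The plan is to deduce the statement from the compactness criterion of \cite{SS1} recalled above, by checking that $i(\mathcal{D}_V(G))$ is a $G$-invariant, vaguely closed, uniformly translation bounded subset of $\cM^\infty(G)$, and then transporting the conclusion back through the embedding $i$, which, by the very definition of the local rubber topology, restricts to a homeomorphism of $(\mathcal{D}_V(G),\text{rubber})$ onto its image with the vague topology.

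\emph{Uniform translation boundedness.} First I would fix once and for all a precompact open symmetric neighbourhood $U$ of $0$ with $U-U\subseteq V$ (available in any topological group, by continuity of addition) and set $K:=\overline{U}$. If $\Lambda$ is $V$-uniformly discrete and $x,y\in\Lambda$ satisfy $(x+U)\cap(y+U)\neq\emptyset$, then $x-y\in U-U\subseteq V$, hence $x=y$; so the translates $\{x+U:x\in\Lambda\}$ are pairwise disjoint. For any $t\in G$ the disjoint sets $x+U$ with $x\in\Lambda\cap(t+K)$ all lie in $t+K+\overline{U}$, so $\card(\Lambda\cap(t+K))\,\theta_G(U)\leq\theta_G(K+\overline{U})<\infty$. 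Since $\theta_G(U)>0$, this gives $i(\mathcal{D}_V(G))\subseteq\cM_{C,K}(G)$ with $C:=\theta_G(K+\overline{U})/\theta_G(U)$, and $\cM_{C,K}(G)$ is vaguely compact by the Lemma of \cite{SS1}.

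\emph{Vague closedness.} Since $G$ is second countable, the vague topology on $\cM_{C,K}(G)$ is metrisable, so it suffices to take $\delta_{\Lambda_n}\to\mu$ vaguely with $\Lambda_n\in\mathcal{D}_V(G)$ and produce $\Lambda\in\mathcal{D}_V(G)$ with $\mu=\delta_\Lambda$. The limit $\mu$ is a positive measure. The decisive observation is the $U$-localisation: on any translate $x_0+U$, each $\delta_{\Lambda_n}$ is either $0$ or a single unit point mass (two points of $\Lambda_n$ in $x_0+U$ would differ by an element of $U-U\subseteq V$). From this I would deduce, in order: (i) no collision of atoms, since distinct $x_n\neq y_n\in\Lambda_n$ cannot both converge to one point $x$, as then $x_n-y_n\to 0\in V^\circ$, forcing $x_n-y_n\in V$ for large $n$, against $V$-uniform discreteness; (ii) consequently, on each sufficiently small translate the limit $\mu$ is again $0$ or a unit point mass, so $\mu=\sum_{x\in\Lambda}\delta_x$ with $\Lambda:=\{x:\mu(\{x\})>0\}$; (iii) $\Lambda$ is $V$-uniformly discrete, for if $x\neq y$ in $\Lambda$ had $x-y\in V$, then picking $x_n,y_n\in\Lambda_n$ with $x_n\to x$, $y_n\to y$ (which exist because $\mu$ carries unit atoms at $x$ and $y$, tested against small bump functions, together with the localisation) would give $x_n-y_n\to x-y\in V$, hence $x_n\neq y_n$ with $x_n-y_n\in V$ for large $n$, the same contradiction. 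Therefore $\mu=\delta_\Lambda\in i(\mathcal{D}_V(G))$.

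\emph{Conclusion.} As a vaguely closed subset of the vaguely compact set $\cM_{C,K}(G)$, the set $i(\mathcal{D}_V(G))$ is vaguely compact (alternatively, being $G$-invariant and vaguely closed inside $\cM_{C,K}(G)$, this is a direct instance of the Lemma of \cite{SS1}). Since the local rubber topology is by definition induced by $i$ from the vague topology, $i$ restricts to a homeomorphism $(\mathcal{D}_V(G),\text{rubber})\to(i(\mathcal{D}_V(G)),\text{vague})$, so $\mathcal{D}_V(G)$ is compact in the local rubber topology. I expect the main obstacle to be step two, namely showing that a vague limit of Dirac combs of $V$-uniformly discrete sets is again the Dirac comb of a $V$-uniformly discrete set; the delicate points are ruling out collisions of atoms and the appearance of a non-atomic part of the limit, and the openness of $V$ (that is, $0\in V^\circ$) is precisely what makes both obstructions disappear.
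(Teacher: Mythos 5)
Your proof is correct. Note that the paper does not prove this lemma at all --- it is quoted verbatim from \cite[Prop.~4]{BL} --- so there is no internal argument to compare against; what you have written is a sound, essentially self-contained reconstruction along the standard lines, and it fits the paper's framework by routing the compactness through the criterion of \cite{SS1} ($i(\mathcal{D}_V(G))\subseteq\cM_{C,K}(G)$ plus vague closedness). The one step that deserves the care you already flag is showing that the vague limit restricted to a translate $x_0+U$ is exactly $0$ or a single \emph{unit} point mass: beyond ruling out colliding atoms one must exclude a diffuse part and a partial atom of mass in $(0,1)$, which follows from the existence of the limits $\mu(f)=\lim_n f(z_n)$ for all $f\in\Cc(G)$ supported in $x_0+U$ (multiplicativity on disjointly supported bumps kills two-point support, and a plateau function forces the surviving mass to equal $1$). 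With that spelled out, the argument is complete, and your closing reduction of rubber-compactness of $\mathcal{D}_V(G)$ to vague compactness of its image via the definition of the local rubber topology is exactly as the paper intends.
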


\begin{corollary} Let $\Lambda$ be an uniformly discrete pointset. Then $i$ induces a homeomorphism
$$
i: \XX(\Lambda) \to \XX(\delta_{\Lambda}) \,.
$$
\end{corollary}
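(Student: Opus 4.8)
The plan is to realise both hulls inside the compact space furnished by \cite[Prop~4]{BL} and then to exploit that the local rubber topology is, by construction, the initial topology induced by $i$. First, since $\Lambda$ is uniformly discrete there is an open neighbourhood $0 \in V \subseteq G$ with $\Lambda \in \mathcal{D}_{V}(G)$. As every translate $-t+\Lambda$ again lies in $\mathcal{D}_{V}(G)$, the $G$-orbit $\{ -t+\Lambda : t \in G \}$ is contained in $\mathcal{D}_{V}(G)$, and hence so is its local rubber closure $\XX(\Lambda)$: indeed $\mathcal{D}_{V}(G)$ is compact and therefore closed, the rubber topology being Hausdorff because $i$ is an injection into the Hausdorff space $\cM^\infty(G)$. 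In particular $\XX(\Lambda)$ is compact. Dually, $i(\mathcal{D}_{V}(G))$ is vaguely compact, hence vaguely closed in $\cM^\infty(G)$, and since it contains the orbit $\{ T_t\delta_{\Lambda} : t \in G \} = \{ \delta_{-t+\Lambda} : t \in G \}$ it contains the vague closure $\XX(\delta_{\Lambda})$ as well.

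Next I would record that $i$ restricts to a homeomorphism $\mathcal{D}_{V}(G) \to i(\mathcal{D}_{V}(G))$. This is immediate: $i$ is injective, and the local rubber topology on $\mathcal{UD}(G)$ is by definition the topology it pulls back from the vague topology on $\cM^\infty(G)$ through $i$, so $i$ is a topological embedding; alternatively, one may simply note that $i$ is a continuous injection from the compact space $\mathcal{D}_{V}(G)$ into the Hausdorff space $\cM^\infty(G)$.

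Finally I would match the two hulls. The map $i$ sends the $G$-orbit of $\Lambda$ onto the $G$-orbit of $\delta_{\Lambda}$, since $i(-t+\Lambda) = \delta_{-t+\Lambda} = T_t\delta_{\Lambda}$. Being an embedding on $\mathcal{D}_{V}(G)$, $i$ carries the closure of the orbit of $\Lambda$ computed inside $\mathcal{D}_{V}(G)$ onto the closure of the orbit of $\delta_{\Lambda}$ computed inside $i(\mathcal{D}_{V}(G))$; by the first paragraph the former closure is exactly $\XX(\Lambda)$, and, since $i(\mathcal{D}_{V}(G))$ is vaguely closed, the latter closure is exactly $\XX(\delta_{\Lambda})$. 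Hence $i|_{\XX(\Lambda)}$ is a homeomorphism onto $\XX(\delta_{\Lambda})$, as claimed. There is no genuine obstacle here; the only point needing a moment of care is the routine topological remark that, because $\mathcal{D}_{V}(G)$ and $i(\mathcal{D}_{V}(G))$ are closed in their respective ambient spaces, the closures of the two orbits taken in these compact subspaces agree with the closures taken in $\mathcal{UD}(G)$ and in $\cM^\infty(G)$, that is, with $\XX(\Lambda)$ and $\XX(\delta_{\Lambda})$.
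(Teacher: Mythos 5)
Your proof is correct and follows exactly the route the paper intends: the corollary is stated as an immediate consequence of the cited compactness lemma for $\mathcal{D}_{V}(G)$, and you supply precisely the details that make it work (closedness of $\mathcal{D}_{V}(G)$ and of $i(\mathcal{D}_{V}(G))$ in their ambient Hausdorff spaces, so that relative and absolute orbit closures agree, plus the fact that $i$ is a topological embedding by the very definition of the local rubber topology). The only blemish is the harmless sign slip $\delta_{-t+\Lambda}=T_{-t}\delta_{\Lambda}$ rather than $T_{t}\delta_{\Lambda}$; since $t$ ranges over all of $G$ the two orbits coincide and nothing in the argument is affected.
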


It is worth noting here that $i(\mathcal{UD}(G))$ is not vaguely closed in $\cM^\infty(G)$ which can create issues when one studies pointsets which are uniformly discrete but not equi uniformly discrete.

\subsection{Dynamical systems from CPS}

Consider a CPS $(G,H,\cL)$ and a precompact set $B \subseteq H$.

The \textbf{hull $\XX(\oplam(B))$ of $\oplam(B)$} is defined as
\[
\XX(B):= \overline{\{ s+\oplam(B) : s \in G \}} \,,
\]
where the closure is taken in the local rubber topology.

Following the notation of \cite{KR}, \textbf{extended hull $\extB$ of $B$} is defined as
\[
\extB:= \overline{\{ s+\oplam(t+B) : s \in G, t \in H \}} \,,
\]
where the closure is taken in the local rubber topology.

For regular model sets we have $\extB=\XX(\oplam(B))$, but this is in general not true. Indeed, consider the CPS $(\RR, H, \cL)$ and the window $W$ of the visible points of the lattice \cite{BHS}. Then, there exists positions $t \in H$ such that $(t+W) \cap L^\star =\emptyset$. Pick one such position $t$ and set $B=t+W$. Then, $\XX(\oplam(B))= \{ \emptyset \}$ while the extended hull $\extB$ contains the visible points of the lattice.

\medskip

We complete this section by reviewing the following results, which we will use in the paper.

\begin{theorem}\label{thm:omegah in sap}\cite{LR,LLRSS} Let $(G,H,\cL)$ be a CPS and $g \in \Cc(H)$. Then $\omega_g \in \SAP(G)$. Moreover, the  hull $\XX(\omega_{g})$ is a compact Abelian group.
\end{theorem}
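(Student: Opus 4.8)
The plan is to describe the full $G$-orbit of $\omega_g$ by simultaneously translating the comb and sliding the window, i.e.\ to exhibit the hull as a continuous image of the torus $\TT=(G\times H)/\cL$, and to read off both assertions from this picture. Note first that $\omega_g=\sum_{x\in L}g(x^\star)\delta_x$ is a translation bounded measure, since $g\in\Cc(H)$ is bounded and compactly supported, so $\XX(\omega_g)\subseteq\cM^\infty(G)$ is meaningful. I would define $\beta\colon G\times H\to\cM^\infty(G)$ by $\beta(t,u):=\sum_{x\in L}g(x^\star+u)\,\delta_{x+t}$, so that $\beta(0,0)=\omega_g$ and $\beta(t,0)=T_t\omega_g$. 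For a fixed $f\in\Cc(G)$ and $(t,u)$ ranging over a relatively compact neighbourhood, only finitely many $x\in L$ contribute to $\beta(t,u)(f)=\sum_{x}g(x^\star+u)f(x+t)$ --- those with $x\in\supp f-t$ and $x^\star\in\supp g-u$, a finite set because $\oplam$ of a compact set is uniformly discrete --- so $(t,u)\mapsto\beta(t,u)(f)$ is locally a finite sum of continuous functions and $\beta$ is vaguely continuous; a reindexing $x\mapsto x+z$ shows $\beta(t+z,u+z^\star)=\beta(t,u)$ for $z\in L$, so $\beta$ is $\cL$-periodic and descends to a continuous $\overline\beta\colon\TT\to\cM^\infty(G)$ with $T_s\overline\beta(\xi)=\overline\beta(\xi+\overline{(s,0)})$, where $\overline{(s,0)}$ is the class of $(s,0)$ in $\TT$.

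Given $\overline\beta$, the relation $\omega_g\in\SAP(G)$ would follow at once: for $f\in\Cc(G)$ one checks $(f*\omega_g)(x)=\beta(-x,0)(f^\dagger)=\Psi\big(\overline{(-x,0)}\big)$, where $\Psi:=\big(\nu\mapsto\nu(f^\dagger)\big)\circ\overline\beta$ lies in $C(\TT)$ because $\nu\mapsto\nu(f^\dagger)$ is vaguely continuous. Thus $f*\omega_g$ is the pullback of a continuous function on the compact abelian group $\TT$ along the continuous homomorphism $x\mapsto\overline{(-x,0)}$, and such pullbacks are Bohr almost periodic --- they are uniform limits of pullbacks of characters of $\TT$, each pulling back to a character of $G$, and $SAP(G)$ is a uniformly closed algebra. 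As $f$ was arbitrary, $\omega_g\in\SAP(G)$.

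For the hull I would argue as follows. Density of $\pi_H(\cL)$ in $H$ makes $D:=\{\overline{(s,0)}:s\in G\}$ dense in $\TT$, so by continuity of $\overline\beta$ and the equivariance above $\XX(\omega_g)=\overline{\{T_t\omega_g:t\in G\}}=\overline\beta(\TT)$ (the vague closure of $\overline\beta(D)$ is $\overline\beta(\TT)$ because $D$ is dense and $\overline\beta(\TT)$ is compact). Let $\Gamma:=\{\xi\in\TT:\overline\beta(\eta+\xi)=\overline\beta(\eta)\text{ for all }\eta\in\TT\}$, a closed subgroup; then $\overline\beta$ factors through an injective continuous $\widetilde\beta\colon\TT/\Gamma\to\cM^\infty(G)$. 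Injectivity is the one point needing an argument: if $\overline\beta(\eta_1)=\overline\beta(\eta_2)$, then applying all $T_s$ yields $\overline\beta(\eta_1+d)=\overline\beta(\eta_2+d)$ for every $d\in D$, hence for every $d\in\TT$ by continuity and density, which is precisely $\eta_1-\eta_2\in\Gamma$. A continuous bijection from a compact space to a Hausdorff space is a homeomorphism, so $\widetilde\beta$ identifies $\XX(\omega_g)$ with the compact abelian group $\TT/\Gamma$, on which $G$ acts by the rotation $t\mapsto q(\overline{(t,0)})$ with dense image, $q\colon\TT\to\TT/\Gamma$ being the quotient. This yields the ``moreover'' statement.

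I expect the real work to be confined to the first paragraph --- that $\beta$ is well defined, vaguely continuous and exactly $\cL$-periodic --- together with the injectivity of $\widetilde\beta$ in the third; everything else is soft. An alternative route to the group statement is to invoke that the hull of a strongly almost periodic measure is minimal and equicontinuous, hence a compact group, but the explicit parametrisation by $\TT$ is cleaner and is the viewpoint used in \cite{LR,LLRSS}.
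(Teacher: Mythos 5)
Your proof is correct and complete; the paper itself gives no argument for this theorem but simply cites \cite{LR,LLRSS}, and the torus parametrisation $\overline\beta\colon\TT\to\cM^\infty(G)$ you construct (continuity, $\cL$-periodicity, equivariance, and injectivity modulo the stabiliser $\Gamma$) is precisely the mechanism used in those references. Nothing further is needed.
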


Let us note the following simple lemma.

\begin{lemma}\label{L1} Let $(G,H,\cL)$ be a CPS and $g \in \Cc(H)$ and let $m$ denote the unique ergodic measure on $\XX(\omega_g)$. Then,
\begin{itemize}
  \item[(a)] For all $t \in H$ we have $\omega_{T_tg} \in \XX(\omega_g)$.
  \item[(b)] For all $f \in C(\XX(\omega_g))$ and all $(s,t) \in G \times H$ we have
  \[
  m(f)= \lim_k \frac{1}{|A_k|} \int_{A_k} f( T_{s+x} \omega_{T_tg}) \dd x \,.
  \]
\end{itemize}
\end{lemma}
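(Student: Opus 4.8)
The plan is to prove Lemma~\ref{L1} by combining Theorem~\ref{thm:omegah in sap} with the standard pointwise ergodic theorem for tempered van Hove sequences, after verifying that the group structure on $\XX(\omega_g)$ makes every $\omega_{T_tg}$ an element of the hull and that translations act on the hull by group rotations.

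For part (a): by \eqref{eq:translate omegah} we have $T_s\omega_g=\omega_{T_{s^\star}g}$ for all $s \in L$, and since $\{s^\star : s \in L\}=\pi_H(\cL)$ is dense in $H$ (property (b) of a CPS), the maps $t \mapsto \omega_{T_tg}$ on a dense subset of $H$ all lie in $\{T_s\omega_g : s \in G\}\subseteq\XX(\omega_g)$. It remains to pass to the closure: I would show that $t\mapsto\omega_{T_tg}$ is continuous from $H$ into $\cM^\infty(G)$ with the vague topology. This is immediate for $g\in\Cc(H)$ because for any $\varphi\in\Cc(G)$ the pairing $\omega_{T_tg}(\varphi)=\sum_{x\in L}g(x^\star-t)\varphi(x)$ is a finite sum (the support of $\varphi$ meets only finitely many lattice points, uniformly for $t$ in a compact set since $g$ has compact support) depending continuously on $t$. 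Hence $\omega_{T_tg}=\lim_{s^\star\to t}\omega_{T_{s^\star}g}\in\overline{\{T_s\omega_g:s\in G\}}=\XX(\omega_g)$ for every $t\in H$.

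For part (b): by Theorem~\ref{thm:omegah in sap} the hull $\XX(\omega_g)$ is a compact abelian group, and the $G$-action is by translations, i.e. by a continuous homomorphism $G\to\XX(\omega_g)$ with dense image (the image contains the orbit of $\omega_g$, which is dense by definition of the hull). A compact abelian group carries a unique translation-invariant probability measure, its Haar measure, which is automatically ergodic for any dense subgroup action; this is the measure $m$. Now fix $(s,t)\in G\times H$. By part (a), $\omega_{T_tg}\in\XX(\omega_g)$, so for any $f\in C(\XX(\omega_g))$ the tempered van Hove sequence $\cA$ and the pointwise ergodic theorem for tempered sequences (Lindenstrauss, cited in the preliminaries) give that $\frac{1}{|A_k|}\int_{A_k}f(T_x\omega_{T_tg})\,\dd x$ converges $m$-a.e.\ in the starting point to $m(f)$. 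But on a compact group with Haar measure and translation action, \emph{every} point is generic for $m$: the function $y\mapsto\frac{1}{|A_k|}\int_{A_k}f(T_x y)\,\dd x$ is itself continuous, and translating it by $y_0$ just shifts the integrand, so the limit is the constant $m(f)$ independently of $y_0$ by uniqueness of the Haar measure. Applying this with $y_0=T_s\omega_{T_tg}$ and noting $T_x T_s\omega_{T_tg}=T_{s+x}\omega_{T_tg}$ yields the claimed formula.

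The main obstacle is the subtlety in part (b): one must be careful that "generic for $m$" holds for \emph{all} points rather than merely $m$-almost all. The clean way around this is to avoid invoking the pointwise ergodic theorem at a non-generic point directly and instead exploit the group structure — since translation by any fixed element is a $m$-preserving homeomorphism of $\XX(\omega_g)$, the set of generic points is translation-invariant and nonempty, hence all of $\XX(\omega_g)$; or, even more cleanly, observe that $m(f)=\int f\,\dd m$ for Haar measure is invariant under precomposition with translations, so the Cesàro averages $\frac{1}{|A_k|}\int_{A_k}f(T_{x}y)\,\dd x$ evaluated at $y=T_s\omega_{T_tg}$ equal those evaluated at $y=\omega_{T_tg}$ up to a boundary term that vanishes by the van Hove property. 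A secondary point worth stating carefully is that $g\in\Cc(H)$ guarantees $\omega_g\in\cM^\infty(G)$ (indeed $\omega_g\in\SAP(G)$ by Theorem~\ref{thm:omegah in sap}), so all the measures in sight are genuinely translation bounded and the hull is a legitimate compact dynamical system; this is where second countability and the uniform discreteness of $\oplam(\supp(g))$ enter.
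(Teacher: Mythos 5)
Your proof is correct and follows essentially the same route as the paper: part (a) via density of $L^\star$ in $H$ together with vague continuity of $t\mapsto\omega_{T_tg}$ (the paper phrases this through uniform continuity of $g$ and equi-uniform discreteness of the supports, your finite-sum argument is an equivalent shortcut), and part (b) via unique ergodicity of the hull. The paper simply cites ``the unique ergodic theorem'' for (b), whereas you justify all-points genericity explicitly through the compact group structure; that extra detail is sound but not a different method.
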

\begin{proof}

\textbf{(a)} Since $L^\star$ is dense in $H$ we can find a precompact neighbourhood $t \in V$ and a net $t_{\alpha}$ with $t_\alpha \in V$ which converges in $H$ to $t$. Then, using the uniform continuity of $g$ we get that
\begin{equation}\label{eq:99}
\lim_\alpha \| T_{t_\alpha}g - T_{t}g \|_\infty =0 \,.
\end{equation}
Next, all these functions are supported in the compact set $\overline{V}+\supp(g)$. Therefore, all the measures $\omega_{ T_{t_\alpha}g},  \omega_{ T_{t}g}$ are supported inside the uniformly discrete set $\oplam(\overline{V}+\supp(g))$. Moreover, \eqref{eq:99} implies that for all $x \in \oplam(\overline{V}+\supp(g))$ we have
\[
\omega_{ T_{t}g}( \{x \})= \lim_\alpha \omega_{ T_{t_\alpha}g}( \{x \}) \,.
\]
This combined with the equi uniform discreteness of their support implies that $\omega_{ T_{t_\alpha}g}$ converges vaguely to $\omega_{ T_{t}g}$.

Finally, since $t_\alpha \in L^\star$ there exists some $s_\alpha\in L$ such that $t_\alpha = s_\alpha^\star$. Then, by \eqref{eq:translate omegah} we have
\[
\omega_{ T_{t_\alpha}g}=T_{s_\alpha} \omega_g \in \XX(\omega_g) \,.
\]
The claim follows.

\medskip

\textbf{(b)} follows immediately from (a) and the unique ergodic theorem.

\end{proof}

\section{Minimal density open model sets}\label{sect: min dens open}

In this section we review the theory of minimal density open model sets from the corresponding theory of maximal density weak model sets from \cite{BHS}, and derive few new results.

\begin{lemma}\label{lemma 1} Let $(G, H, \cL)$ be a CPS and let $U \subseteq H$ be a precompact open set and $K$ a regular window such that $U \subseteq K$.
Then,
\begin{itemize}
  \item[(i)] $\oplam(K)=\oplam(K \backslash U) \bigcupdot \oplam(U)$.
  \item[(ii)] $\oplam(K \backslash U)$ has maximal density with respect to $\cA$ if and only if $\oplam(U)$ has minimal density with respect to $\cA$.
\end{itemize}
\end{lemma}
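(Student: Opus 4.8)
\textbf{Proof proposal for Lemma~\ref{lemma 1}.}

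The plan is to prove both parts by directly unwinding the definition of $\oplam$ as a preimage under the star map, and then transferring the density statement through Moody's density dichotomy together with the Hof inequalities already recorded in the excerpt.

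For part (i), recall $\oplam(V)=\{x \in L : x^\star \in V\}$, so $\oplam$ is literally the pullback $(\star)^{-1}$ restricted to $L$. Hence $\oplam$ commutes with finite unions, intersections and set-differences of windows: since $K = (K\setminus U) \bigcupdot U$ is a disjoint union in $H$, applying $(\star)^{-1}$ gives $\oplam(K) = \oplam(K\setminus U) \cup \oplam(U)$, and the union is disjoint because $(K\setminus U) \cap U = \emptyset$ forces the two preimages to be disjoint. This part is essentially a one-line set-theoretic observation; the only thing to note is that $K \setminus U$ is precompact (contained in $K$) and that $\oplam(U), \oplam(K\setminus U)$ are both uniformly discrete, so $\delta_{\oplam(K)} = \delta_{\oplam(K\setminus U)} + \delta_{\oplam(U)}$ as measures with equi-uniformly-discrete supports.

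For part (ii), the key point is that the counting functions add: from the disjoint decomposition in (i),
\[
\card\big(\oplam(K) \cap A_k\big) = \card\big(\oplam(K\setminus U) \cap A_k\big) + \card\big(\oplam(U) \cap A_k\big)
\]
for every $k$. Divide by $|A_k|$ and take limits along $\cA$. The window $K$ is regular, hence $\theta(\partial K)=0$, so $\theta(\overline{K}) = \theta(K^\circ) = |K|$ and $\oplam(K)$ is a regular model set: by Schlottmann's theorem (or directly by the remark following Theorem~\ref{thm:moody}, noting $\theta(K)$ is unambiguous) $\dens_{\cA}(\oplam(K)) = \dens(\cL)\,|K|$ exists for \emph{every} van Hove sequence $\cA$. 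Now $U$ open gives $\overline{K\setminus U} \subseteq \overline{K} \setminus U = \overline{K}\setminus U$, and since $K$ is regular one computes $\theta(\overline{K\setminus U}) = |K| - |U|$ because $U \subseteq K^\circ$ up to a null set; similarly $(K\setminus U)^\circ \supseteq K^\circ \setminus \overline{U}$, and for the open set $U$ one has $\theta(U^\circ)=\theta(U)=|U|$. Feeding these into the Hof inequalities $\dens(\cL)\theta(V^\circ) \le \ldens(\oplam(V)) \le \udens(\oplam(V)) \le \dens(\cL)\theta(\overline V)$ applied to $V = K\setminus U$ and to $V = U$, and using that the two upper/lower densities must sum to the existing limit $\dens(\cL)|K|$, one gets: $\udens_\cA(\oplam(U)) = \dens(\cL)|U| - \ldens_\cA(\oplam(K\setminus U))$ and $\ldens_\cA(\oplam(U)) = \dens(\cL)|U| - \udens_\cA(\oplam(K\setminus U))$. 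Therefore $\oplam(K\setminus U)$ attains its maximal density $\dens(\cL)\theta(\overline{K\setminus U}) = \dens(\cL)(|K|-|U|)$ if and only if $\oplam(U)$ attains its minimal density $\dens(\cL)\theta(U^\circ) = \dens(\cL)|U|$, which is exactly the claimed equivalence.

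The main obstacle I anticipate is the bookkeeping of boundary/closure/interior measures in the step above: one must be careful that $U$ being merely open and precompact (not regular) is enough, and the clean cancellation relies crucially on $K$ being regular so that $\dens_\cA(\oplam(K))$ exists for \emph{all} $\cA$ simultaneously — without that, the ``if and only if'' could fail for a fixed $\cA$. I would isolate the identity $\theta(\overline{K\setminus U}) + \theta(U^\circ) = \theta(\overline K)$ (equivalently $\theta(\overline{K\setminus U}) + |U| = |K|$), which holds because $K$ regular implies $|\partial K|=0$ and $U$ open precompact inside $K$ gives $\overline{K\setminus U} = \overline K \setminus U$ up to a $\theta$-null set; this is the one genuinely nontrivial measure-theoretic input, and everything else is formal.
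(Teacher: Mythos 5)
Your proof is correct and follows essentially the same route as the paper: additivity of the counting functions coming from the disjoint decomposition in (i), additivity of the Haar measure ($\theta_H(K)=\theta_H(K\setminus U)+\theta_H(U)$, with $\overline{K\setminus U}=K\setminus U$ since $K$ is compact and $U$ open), and the existence of $\dens_{\cA}(\oplam(K))=\dens(\cL)\theta_H(K)$ for the regular window $K$, so that the limit for one summand exists and has the extremal value iff the same holds for the other. The only blemish is a slip in your displayed identities, which should read $\udens_{\cA}(\oplam(U))=\dens(\cL)|K|-\ldens_{\cA}(\oplam(K\setminus U))$ and $\ldens_{\cA}(\oplam(U))=\dens(\cL)|K|-\udens_{\cA}(\oplam(K\setminus U))$ (with $|K|$, not $|U|$), consistent with your own remark that the two densities sum to $\dens(\cL)|K|$; the paper avoids the $\limsup/\liminf$ bookkeeping altogether by simply noting that two of the three limits exist, hence so does the third.
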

\begin{proof}
\textbf{(a)} is obvious.

\textbf{(b)}
$\Longrightarrow$:

Since $K$ is regular model set and $\oplam(K \backslash U)$ is maximal density, we have
\begin{align*}
\lim_k \frac{1}{|A_k|} \card(\oplam(K)\cap A_k)&=\dens(\cL) \theta_{H}(K) \,,\\
\lim_k \frac{1}{|A_k|} \card(\oplam(K\backslash U)\cap A_k)&=\dens(\cL) \theta_{H}(K\backslash U) \,. \\
\end{align*}
Now by (a)
\begin{equation}\label{EQ1}
\card(\oplam(K\backslash U)\cap A_k) + \card(\oplam(U)\cap A_k)=\card(\oplam(K)\cap A_k) \,.
\end{equation}
Since $U \subseteq K$ we also have
\begin{equation}\label{eq2}
\theta_H(K)=\theta_H(K\backslash U)+\theta_H(U) \,.
\end{equation}

This immediately gives
$$
\lim_n \frac{1}{|A_k|} \card(\oplam(U)\cap A_k)=\dens(\cL) \theta_{H}(U) \,.
$$

$\Longleftarrow$: is similar.

Since $K$ is regular model set and $\oplam(U)$ is minimal density, we have
\begin{align*}
\lim_k \frac{1}{|A_k|} \card(\oplam(K)\cap A_k)&=\dens(\cL) \theta_{H}(K) \,, \\
\lim_k \frac{1}{|A_k|} \card(\oplam(U)\cap A_k)&=\dens(\cL) \theta_{H}(U) \,.\\
\end{align*}
Now by \eqref{EQ1} and \eqref{eq2} we get
$$
\lim_n \frac{1}{|A_k|} \card(\oplam(K\backslash U)\cap A_k)=\dens(\cL) \theta_{H}(K\backslash U) \,.
$$
\end{proof}

As consequence we get(compare \cite{BHS}).

\begin{theorem}\label{thm: min density model sets} Let $\cA$ be a van Hove sequence, $(G,H, \cL)$ be a CPS and $U \subseteq H$ be open precompact set such that $\oplam(U)$ is minimal density open model set with respect to $\cA$. Then,
\begin{itemize}
  \item[(a)] $\oplam(U) \in \Bap_{\cA,ps}(G)$.
  \item[(b)] The autocorrelation $\gamma$ of $\oplam(U)$ exists and satisfies
  \[
  \gamma=\dens(\cL) \omega_{c(U)} = \dens( \cL) \sum_{(x,x^\star) \in \cL} \theta_H(U \cap (x^\star +U)) \delta_x \,.
  \]
  \item[(c)] The diffraction $\widehat{\gamma}$ is given by
\[
\widehat{\gamma}= \dens(\cL)^2 \sum_{(\chi,\chi^\star) \in \cL^0} \left| \int_{U} \chi^\star(t) \dd t \right|^2 \delta_\chi \,.
\]
  \item[(d)] The Fourier--Bohr coefficients
  \[
a_\chi^\cA( \delta_{\oplam(U)})= \lim_k \frac{1}{|A_k|} \sum_{x \in \oplam(U) \cap A_k} \overline{\chi(x)}
  \]
  exists and satisfy
  \[
a_\chi^\cA( \delta_{\oplam(U)})= \left\{
\begin{array}{cc}
\dens(\cL) \widecheck{1_U}(\chi^\star)=\dens(\cL) \int_{U} \chi^\star(t) \dd t  & \mbox{ if } \chi \in \pi_{\hat{G}}(\cL^0) \\
 0 & \mbox{ otherwise }
\end{array}
\right. \,.
  \]
  \item[(e)] $\delta_{\oplam(U)}$  satisfies the Consistent Phase Property
\[
\widehat{\gamma}(\{ \chi \}) = \left| a_\chi^\cA( \delta_{\oplam(U)}) \right|^2 \qquad \forall \chi \in \widehat{G} \,.
\]
  \item[(f)] There exist a $G$-invariant ergodic measure $m$ on $\XX=\XX(\oplam(U))$ such that $\oplam(U)$ is generic for $m$.
  \item[(g)] $(\XX, G, m)$ has pure point dynamical spectrum.
  \item[(h)] $\gamma$ is the autocorrelation of $(\XX, G, m)$.
\end{itemize}
\end{theorem}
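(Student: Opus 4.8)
The plan is to reduce every claim of Theorem~\ref{thm: min density model sets} for a minimal density open model set $\oplam(U)$ to the corresponding (known) statement for a maximal density weak model set, using the decomposition from Lemma~\ref{lemma 1}. Fix a regular window $K \supseteq U$ with $\overline{U} \subseteq K^\circ$ (such a $K$ exists since $\overline{U}$ is compact and one can fatten it to a regular set, e.g.\ an averaged indicator neighbourhood of $\overline{U}$). By Lemma~\ref{lemma 1}(i) we have the \emph{disjoint} decomposition $\oplam(K) = \oplam(K\setminus U)\, \dot\cup\, \oplam(U)$, and by Lemma~\ref{lemma 1}(ii) the hypothesis that $\oplam(U)$ has minimal density w.r.t.\ $\cA$ forces $\oplam(K\setminus U)$ to have maximal density w.r.t.\ $\cA$; here $K\setminus U$ is a compact set (as $U$ is open, $K$ compact). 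Since $K$ is a regular model set, $\oplam(K)$ is strongly almost periodic with the usual autocorrelation, diffraction and Fourier--Bohr data. So at the level of Dirac combs we have the identity $\delta_{\oplam(K)} = \delta_{\oplam(K\setminus U)} + \delta_{\oplam(U)}$, i.e.\ $\delta_{\oplam(U)} = \delta_{\oplam(K)} - \delta_{\oplam(K\setminus U)}$.

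\textbf{Carrying out the steps.} For (a): $\Bap_{\cA}(G)$ is a vector space (it is the $\|\cdot\|_{b,1,\cA}$-closure of trigonometric polynomials, convolved against $\Cc(G)$), $\delta_{\oplam(K)}\in\SAP(G)\subseteq\Bap_{\cA}(G)$, and $\delta_{\oplam(K\setminus U)}\in\Bap_{\cA,ps}(G)$ because $\oplam(K\setminus U)$ is a maximal density weak model set (the Baake--Huck--Strungaru / Keller--Richard result quoted in the introduction). Hence $\delta_{\oplam(U)} = \delta_{\oplam(K)} - \delta_{\oplam(K\setminus U)} \in \Bap_{\cA}(G)$, giving $\oplam(U)\in\Bap_{\cA,ps}(G)$. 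For (d) and (e): Fourier--Bohr coefficients are linear in the measure (Prop.~\ref{prop: FB for besicovitch}), so $a_\chi^\cA(\delta_{\oplam(U)}) = a_\chi^\cA(\delta_{\oplam(K)}) - a_\chi^\cA(\delta_{\oplam(K\setminus U)})$; plugging in the known formulas $\dens(\cL)\widecheck{1_K}(\chi^\star)$ and $\dens(\cL)\widecheck{1_{K\setminus U}}(\chi^\star)$ (both supported on $\chi\in\pi_{\hat G}(\cL^0)$) and using $1_K = 1_{K\setminus U} + 1_U$ in $L^1(H)$ yields exactly the claimed $\dens(\cL)\widecheck{1_U}(\chi^\star)$. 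For (b): the autocorrelation of a Besicovitch almost periodic measure exists w.r.t.\ $\cA$ (this is part of the LSS theory), and one can either compute it directly from $\gamma_k = \frac1{|A_k|}(\delta_{\oplam(U)}|_{A_k})*\widetilde{(\delta_{\oplam(U)}|_{A_k})}$ using van Hove averaging of $\card(\oplam(U)\cap(x^\star+U)\cap A_k)$ along the lines of the weak model set computation, or get it via the general principle (again LSS) that a Besicovitch a.p.\ measure has the same autocorrelation as the strong a.p.\ measure it is $\|\cdot\|_{b,1,\cA}$-close to, namely $\omega_{c(U)}$ up to the factor $\dens(\cL)$. For (c): apply the Fourier transform to $\gamma = \dens(\cL)\omega_{c(U)}$; since $c(U) = 1_U * \widetilde{1_U}$ has $\widecheck{c(U)} = |\widecheck{1_U}|^2$, the standard diffraction formula for $\omega_h$ with $h$ of this form (the "dual CPS" computation used for regular model sets, which is purely an $L^1$-Fourier statement here) gives $\widehat{\gamma} = \dens(\cL)^2\sum_{(\chi,\chi^\star)\in\cL^0}|\widecheck{1_U}(\chi^\star)|^2\delta_\chi$. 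Then (e) is immediate by comparing (c) and (d). For (f)--(h): by the LSS result that every Besicovitch almost periodic measure is generic for an ergodic measure on its hull with pure point dynamical spectrum, and whose autocorrelation equals that of the measure, we obtain the ergodic $m$ on $\XX(\oplam(U))$ (identifying $\XX(\oplam(U))$ with $\XX(\delta_{\oplam(U)})$ via the rubber-topology homeomorphism) with $\oplam(U)$ generic for $m$, pure point spectrum, and autocorrelation $\gamma$.

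\textbf{Main obstacle.} The delicate point is \emph{not} any single item but making the reduction clean: I must ensure the regular window $K$ can be chosen with $K\setminus U$ genuinely compact and $\oplam(K)$ genuinely a regular (strong a.p.) model set with boundary of measure zero, \emph{and} — crucially — that the maximality of $\oplam(K\setminus U)$'s density is along the \emph{same} sequence $\cA$ for which $\oplam(U)$ is minimal; this is exactly the content of Lemma~\ref{lemma 1}(ii), so it is already in hand. The one genuine subtlety is (b): extracting the precise autocorrelation (rather than just existence) requires either invoking the general structural fact from \cite{LSS} that Besicovitch a.p.\ measures have autocorrelation equal to that of their $b,1$-approximant and identifying that approximant, or redoing the van Hove counting argument for $\oplam(U)$ directly; the former is cleaner but one should check the hypotheses of the LSS theorem apply to $\delta_{\oplam(U)}$ (translation boundedness is clear since $U$ is precompact, so $\oplam(U)$ is uniformly discrete). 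Everything else is linearity of Fourier--Bohr coefficients, closedness of $\Bap_{\cA}$ under sums, and the elementary $L^1$ identity $1_K = 1_{K\setminus U}+1_U$.
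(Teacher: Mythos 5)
Your proposal follows essentially the same route as the paper: enclose $U$ in a regular window $K$, use Lemma~\ref{lemma 1} to transfer minimality of $\dens(\oplam(U))$ into maximality of $\dens(\oplam(K\setminus U))$ along the same sequence $\cA$, write $\delta_{\oplam(U)}=\delta_{\oplam(K)}-\delta_{\oplam(K\setminus U)}$, and then reduce (b)--(e) to the known formulas for regular and maximal-density weak model sets and (f)--(h) to the Lenz--Spindeler--Strungaru results on Besicovitch almost periodic measures. One justification in your step (a) is wrong, though the conclusion survives: the Dirac comb $\delta_{\oplam(K)}$ of a regular model set is \emph{not} in $\SAP(G)$ (strong almost periodicity of the Dirac comb of a Meyer set essentially forces it to be a crystallographic set; e.g.\ the Fibonacci comb is not strongly almost periodic --- only $\omega_g$ with $g\in\Cc(H)$ lands in $\SAP(G)$, and $1_K$ is not continuous). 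The correct statement, and the one the paper uses, is that $\delta_{\oplam(K)}$ is \emph{Weyl} almost periodic, hence Besicovitch almost periodic with respect to $\cA$; with that substitution your argument for (a), and everything downstream of it, matches the paper's proof.
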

\begin{proof}
\textbf{(a)} Pick some regular set $K$ such that $U \subseteq K$. Such a set exists by \cite{HR}.

Then, we have
\begin{displaymath}
\delta_{\oplam(U)}= \delta_{\oplam(K)} - \delta_{\oplam(K \backslash U)} \,.
\end{displaymath}

Since $\oplam(K)$ is a regular model set, the measure $\delta_{\oplam(K)}$ is Weyl almost periodic \cite[Cor.~4.26]{LSS} and hence Besicovitch almost periodic with respect to $\cA$ \cite[Prop.~4.3]{LSS}.

Next, since $\oplam(U)$ is a minimal density model set with respect to $\cA$, by Lemma~\ref{lemma 1} $\oplam(K \backslash U)$ is a maximal density model set with respect to $\cA$, and hence $\delta_{\oplam(K \backslash U)} \in \Bap_{\cA}(G)$ \cite[Prop.~3.39]{LSS}.

Therefore,
$$\delta_{\oplam(U)}= \delta_{\oplam(K)} - \delta_{\oplam(K \backslash U)} \in \Bap_{\cA}(G) \,.$$

\textbf{(b),(c), (d)} follow from \cite[Thm.~9]{BHS}.

\textbf{(e)} follow from \cite[Thm.~9]{BHS} or \cite[Thm.~3.36]{LSS}.

\textbf{(f)} follows from \cite[Thm.~6.13]{LSS} or \cite[Thm.~3.4 and Prop.~6.1]{LSS2}.

\textbf{(g), (h)} follows from \cite[Thm.~6.13]{LSS}
\end{proof}

\section{Model sets with Borel windows}\label{Sect:model sets with norel windows}

In this section we study model sets with precompact Borel windows. We show that almost surely all positions of the windows yield Besicovitch almost periodic point sets. This allows us calculate the autocorrelation, diffraction and Fourier--Bohr coefficients of all these point sets.

\begin{theorem}\label{thm:main} Let $(G,H, \cL)$ be a CPS and $B \subseteq H$ be Borel precompact set. Then, there exists a set $\cT \subseteq \TT$ of full measure with the following properties:
\begin{itemize}
  \item[(a)]  For all $(s,t)+\cL \in \cT$ we have $-s+\oplam(t+B) \in \Bap_{\cA,ps}(G)$.
    \item[(b)] For each $(s,t)+\cL \in \cT$ we have
  \[
  \lim_k \frac{1}{|A_k|} \card( (-s+\oplam(t+B)) \cap A_k ) = \dens(\cL) \theta_H(B) \,.
  \]
  \item[(c)] For each $(s,t)+\cL \in \cT$ the autocorrelation $\gamma$ of $-s+\oplam(t+B)$ exists, is independent of the choice of $(s,t)+\cL \in \cT$, and satisfies
  \[
  \gamma=\dens(\cL) \omega_{c(B)} = \dens( \cL) \sum_{(x,x^\star) \in \cL} \theta_H(B \cap (x^\star +B)) \delta_x \,.
  \]
  \item[(d)] The diffraction $\widehat{\gamma}$ is given by
\[
\widehat{\gamma}= \dens(\cL)^2 \sum_{(\chi,\chi^\star) \in \cL^0} \left| \int_{B} \chi^\star(t) \dd t \right|^2 \delta_\chi \,,
\]
  \item[(e)] For each $(s,t)+\cL \in \cT$, the Fourier--Bohr coefficients
  \[
a_\chi^\cA(-s+\oplam(t+B))=\chi(s) \lim_k \frac{1}{|A_k|} \sum_{x \in \oplam(t+B) \cap A_k} \overline{\chi(x)}
  \]
  exist and satisfy
  \[
a_\chi^\cA(-s+\oplam(t+B))= \left\{
\begin{array}{cc}
\dens(\cL) (\chi(s)\chi^\star(t)  ) \int_{B} \chi^\star(r) \dd r  & \mbox{ if } \chi \in \pi_{\hat{G}}(\cL^0) \\
 0 & \mbox{ otherwise }
\end{array}
\right. \,.
  \]
  \item[(f)]For all $(s,t)+\cL \in \cT$ the set $-s+\oplam(t+B)$  satisfies the Consistent Phase Property
\[
\widehat{\gamma}(\{ \chi \}) = \left| a_\chi^\cA( -s+\oplam(t+B)) \right|^2 \qquad \forall \chi \in \widehat{G} \,.
\]
  \item[(g)] There exist a $G$-invariant ergodic measure $\m$ on the extended hull $\extB$ such that, for all $(s,t) +\cL \in \cT$ the set $-s+\oplam(t+B)$ is generic for $\m$.
  \item[(h)] $(\extB, G, \m)$ has pure point dynamical spectrum generated by $\{ \chi : \widehat{\gamma}(\{\chi \}) \neq 0 \}$.
  \item[(i)] $\gamma$ is the autocorrelation of $(\extB, G, \m)$.
  \item[(j)] $\m( \extB \cap \Bap_{\cA,ps}(G))=1$.
  \item[(k)] For each $\chi$ with $\widehat{\gamma}(\{ \chi \}) \neq 0$ the function
  \[
\extB \cap \Bap_{\cA,ps}(G) \ni \omega \to a_{\chi}^{\cA} ( \omega)
  \]
  is a measurable eigenfunction for $\extB$.
\end{itemize}
\end{theorem}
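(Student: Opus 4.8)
The plan is to reduce everything to two ingredients: Moody's equidistribution theorem (Theorem~\ref{thm:moody}), which I would first upgrade from windows to bounded Borel functions of precompact support (write such an $h$ as a uniform limit of simple functions $\sum_ic_i1_{W_i}$ and apply Theorem~\ref{thm:moody} to each $1_{W_i}$); and the transfer results of \cite{LSS,LSS2} connecting Besicovitch almost periodicity with pure point diffraction, the CPP, genericity, and pure point dynamical spectrum. I would fix once and for all a countable family $\mathcal G\subseteq\Cc(H)$ with $\inf_{g\in\mathcal G}\|1_B-g\|_{L^1(H)}=0$ and regular windows $W_n\subseteq H$ (say finite unions of open precompact balls of null boundary) with $\theta_H(W_n\triangle B)\to0$, and let $\cT$ be the set of $(s,t)+\cL$ for which the upgraded Moody theorem holds simultaneously for the countably many functions $1_B$, $1_{W_n\setminus B}$, $1_{B\setminus W_n}$, $1_{B\cap(B-z^\star)}$ ($z\in L$) and $|1_B-g|$ ($g\in\mathcal G$); being a countable intersection of conull sets, $\cT$ is conull.

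Parts (a),(b) come first. Part (b) is Theorem~\ref{thm:moody} applied to $B$. For (a), for $(s,t)+\cL\in\cT$ I would use the disjoint decompositions of $B$ and $W_n$ along $B\cap W_n$ to write $\delta_{\oplam(t+B)}=\delta_{\oplam(t+W_n)}+\delta_{\oplam(t+(B\setminus W_n))}-\delta_{\oplam(t+(W_n\setminus B))}$; the first summand is a regular model set comb, hence Weyl and so Besicovitch almost periodic with respect to $\cA$ (as in the proof of Theorem~\ref{thm: min density model sets}(a)), while the last two have $\uM_\cA\le\dens(\cL)\theta_H(B\triangle W_n)\to0$ by Moody. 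Hence $\delta_{\oplam(t+B)}$, and then its translate $\delta_{-s+\oplam(t+B)}$, is a $\|\cdot\|_{b,1,\cA}$-limit of Besicovitch almost periodic measures, so lies in $\Bap_{\cA,ps}(G)$. For (c)--(f): (e) follows from Proposition~\ref{prop: FB for besicovitch}, the identity $a_\chi^\cA(-s+\oplam(t+B))=\chi(s)a_\chi^\cA(\oplam(t+B))$, the estimate $|a_\chi^\cA(\oplam(t+B))-a_\chi^\cA(\oplam(t+W_n))|\le\dens(\cL)\theta_H(B\triangle W_n)$, and the regular model set formula for $a_\chi^\cA(\oplam(t+W_n))$ (Theorem~\ref{thm: min density model sets}(d) with $U=W_n$, translated by $t$), on letting $n\to\infty$. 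For (c) I would expand $\gamma_k$ of $\delta_{-s+\oplam(t+B)}$ against $\varphi\in\Cc(G)$, group the pairs by their difference $z\in L$, discard the $o(|A_k|)$ van Hove boundary terms, use $\oplam(t+B)\cap(\oplam(t+B)-z)=\oplam(t+(B\cap(B-z^\star)))$, and apply Moody to the windows $B\cap(B-z^\star)$; this shows $\gamma=\lim_k\gamma_k$ exists along all of $\cA$, equals $\dens(\cL)\omega_{c(B)}$, and is independent of $(s,t)$. Since $c(B)=1_B*\widetilde{1_B}\in\Cc(H)$, the positive definite strongly almost periodic measure $\dens(\cL)\omega_{c(B)}$ has, by the Poisson summation formula for $\cL$, Fourier transform $\dens(\cL)^2\omega^{}_{|\widecheck{1_B}|^2}$; this is (d) and shows $\widehat\gamma$ is pure point. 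Finally (f) is a direct comparison of (d) and (e): on $\pi_{\widehat G}(\cL^0)$ both sides equal $\dens(\cL)^2|\widecheck{1_B}(\chi^\star)|^2$ (since $|\chi(s)\chi^\star(t)|=1$), and off $\pi_{\widehat G}(\cL^0)$ both vanish.

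For (g)--(k): by (a) and \cite{LSS,LSS2}, each $\delta_{-s+\oplam(t+B)}$ with $(s,t)+\cL\in\cT$ is generic for a unique ergodic measure $\m_{(s,t)}$ on $\extB$ having pure point dynamical spectrum. The heart of the argument is that $\m_{(s,t)}$ does not depend on $(s,t)$. For this I would fix $g\in\mathcal G$: by Theorem~\ref{thm:omegah in sap} and Lemma~\ref{L1}, $T_{-s}\omega_{T_tg}$ lies in the uniquely ergodic hull $\XX(\omega_g)$ and is generic for its Haar measure, with ergodic averages independent of $(s,t)$. Since $\delta_{-s+\oplam(t+B)}-T_{-s}\omega_{T_tg}=T_{-s}\omega_{T_t(1_B-g)}$ has $\uM_\cA\le\dens(\cL)\|1_B-g\|_{L^1(H)}$ for $(s,t)+\cL\in\cT$, and since $\|\mu*\varphi\|_{b,1,\cA}\le\uM_\cA(|\mu|)\|\varphi\|_1$, the ergodic averages along $\delta_{-s+\oplam(t+B)}$ and along $T_{-s}\omega_{T_tg}$ of any fixed polynomial in finitely many of the continuous functions $f_\varphi$ ($\varphi\in\Cc(G)$) differ by at most a constant, depending only on the polynomial, times $\dens(\cL)\|1_B-g\|_{L^1(H)}$; letting $g$ run over $\mathcal G$ forces these averages to agree for all such polynomials, hence (Stone--Weierstra{\ss} and Proposition~\ref{prop:generic char}) $\m_{(s,t)}=\m_{(s',t')}=:\m$, which is (g). Then (h),(i) follow from \cite{LSS,LSS2}: $\m$ has pure point spectrum whose eigenvalue group is generated by the Fourier--Bohr spectrum, which by (d)--(f) equals $\{\chi:\widehat\gamma(\{\chi\})\ne0\}$, and its autocorrelation equals that of the generic point $\delta_{-s+\oplam(t+B)}$, namely $\gamma$. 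Part (j) is the statement \cite{LSS2} that an ergodic system with pure point spectrum has $\m$-a.e.\ element Besicovitch almost periodic (and every element of $\extB$ is the Dirac comb of a uniformly discrete set), and for (k) the map $\omega\mapsto a_\chi^\cA(\omega)$ is a bounded, measurable, $\m$-a.e.\ defined function satisfying $a_\chi^\cA(T_r\omega)=\overline{\chi(r)}a_\chi^\cA(\omega)$, hence a measurable eigenfunction, nontrivial exactly when $\widehat\gamma(\{\chi\})\ne0$ by (e).

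The hard part will be (g): because the parametrization $(s,t)+\cL\mapsto\delta_{-s+\oplam(t+B)}$ is only Borel and not continuous, one cannot identify $\m$ by pushing Haar measure forward along a continuous factor map as in the compact or open window case, so one must instead show directly that all admissible positions are generic for one and the same ergodic measure. The device of transferring genericity from the strongly almost periodic combs $\omega_g$ — where every position works by unique ergodicity — to $\delta_{-s+\oplam(t+B)}$, uniformly in the position via an $L^1$-bound on $\uM_\cA$ of the difference, is the crucial idea. The accompanying technical points are the upgrade of Moody's theorem to bounded Borel functions and the bookkeeping that keeps the exceptional set conull.
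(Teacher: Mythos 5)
Your proposal is correct and its skeleton is the same as the paper's: define $\cT$ as a countable intersection of the conull sets furnished by Moody's theorem, transfer Besicovitch almost periodicity and the explicit formulas from $L^1$-approximants of $1_B$, and prove (g) by comparing ergodic averages of $\delta_{-s+\oplam(t+B)}$ with those of the strongly almost periodic combs $T_{-s}\omega_{T_tg}$ on their uniquely ergodic hulls, via the bound $\uM_{\cA}(|T_{-s}\omega_{T_t g}-\delta_{-s+\oplam(t+B)}|)\lesssim \|1_B-g\|_{L^1(H)}$ and the telescoping product estimate — this last step, which you correctly identify as the crux, is identical to the paper's. The differences are in the approximation scheme. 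The paper works throughout with a nested squeeze $K_n\subseteq B\subseteq U_n$ ($K_n$ compact, $U_n$ open, $\theta_H(U_n\setminus K_n)<1/n$) and chooses $1_{K_n}\leq g_n\leq 1_{U_n}$; positivity then dominates every error term by $\delta_{-s+\oplam(t+U_n)}-\delta_{-s+\oplam(t+K_n)}$, so Moody's theorem is only ever invoked for the sets $K_n,U_n$ themselves. Your symmetric-difference approximation by regular windows $W_n$ forces you to invoke Moody for the non-nested Borel sets $B\setminus W_n$ and $W_n\setminus B$ (harmless, since the theorem as stated covers measurable precompact sets) and, for (g), for the non-indicator functions $|1_B-g|$, which is exactly why you need the upgrade of Moody's theorem to bounded Borel functions; your sketch of that upgrade (simple-function approximation plus the Huck--Richard upper density bound on $\oplam$ of the common precompact support) is sound, but the paper's nested choice makes it unnecessary. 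For (c) you compute the autocorrelation coefficients directly by applying Moody to the countably many intersection windows $B\cap(B-z^\star)$, $z\in L$, which simultaneously gives existence of $\gamma$ along all of $\cA$ and the formula; the paper instead gets existence from Besicovitch almost periodicity and identifies $\gamma$ by squeezing it between $\dens(\cL)\omega_{c(K_n)}$ and $\dens(\cL)\omega_{c(U_n)}$. Your route is a bit more self-contained (it also yields (f) by direct comparison of (d) and (e) rather than by citation), at the cost of a longer list of sets defining $\cT$; both are valid.
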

\begin{proof}

Fix first an open set $U$ such that $U$ is precompact and $B \subseteq U$.

Since $B$ is precompact, $\theta_H(B) < \infty$. Then, for each $n$, by \cite[Thm.~2.14 (c) and (d)]{Rud}, there exists some compact set $K_n$ and open precompact set $U_n \subseteq U$ such that $K_n \subseteq B \subseteq U_n$ and
\[
\theta_{H} (U_n \backslash K_n) < \frac{1}{n} \,.
\]

By Theorem~\ref{thm:moody}, there exists sets $X_n, Y_n \subseteq \TT$ of full measure such that
\begin{align*}
\lim_{k} \frac{1}{|A_k|} \card( \left(-s+\oplam(t+K_n)\right) \cap A_k) &= \dens(\cL) \theta_{H}(K_n) \qquad \forall (s,t) +\cL \in X_n \,,  \\
\lim_{k} \frac{1}{|A_k|} \card( \left(-s+\oplam(t+U_n)\right) \cap A_k ) &= \dens(\cL) \theta_{H}(U_n) \qquad \forall (s,t) +\cL \in Y_n \,. \\
\end{align*}

Define
\[
\cT:= \bigcap_n (X_n \cap Y_n) \,.
\]
Then, $\cT$ has full measure in $\TT$ and for all $(s,t)+\cL \in \cT$ we have
\begin{align*}
\lim_{k} \frac{1}{|A_k|} \card( \left(-s+\oplam(t+K_n)\right) \cap A_k ) &= \dens(\cL) \theta_{H}(K_n)  \,, \\
\lim_{k} \frac{1}{|A_k|} \card( \left(-s+\oplam(t+U_n)\right) \cap A_k ) &= \dens(\cL) \theta_{H}(U_n)  \,.\\
\end{align*}

We claim that this $\cT$ satisfies the given conditions.

\textbf{(a)} Let $(s,t) +\cL \in \cT$.

Let $\varphi \in \Cc(G)$. Then, for all $n$ we have
\begin{align*}
&\bigl\|\delta_{-s+\oplam(t+U_n)}*\varphi(t) - \delta_{ -s+\oplam(t+B)}*\varphi(t) \bigr\|_{b,1,\cA} \\
 &\leq  \| \varphi \|_1 \uM_{\cA} \left|\delta_{-s+\oplam(t+U_n)} - \delta_{ -s+\oplam(t+B)}\right| \,.\\
\end{align*}

Moreover, since $K_n \leq B \leq U_n$, the measure $\delta_{-s+\oplam(t+U_n)} - \delta_{ -s+\oplam(t+B)}$ is positive and
\begin{align*}
&\uM_{\cA}( \left|\delta_{-s+\oplam(t+U_n)} - \delta_{ -s+\oplam(t+B)}\right|) \\
&=\limsup_{k} \frac{1}{|A_k|} \delta_{-s+\oplam(t+U_n)} - \delta_{ -s+\oplam(t+B)}(A_k) \\
&\leq \lim_{k} \frac{1}{|A_k|} \delta_{-s+\oplam(t+U_n)} - \delta_{ -s+\oplam(t+K_n)}(A_k) \\
&= \dens(\cL) \left( \theta_H(U_n)- \theta_H(K_n) \right) < \frac{\dens(\cL)}{n} \,.
\end{align*}

Therefore,
\begin{displaymath}
\bigl\|\delta_{-s+\oplam(t+U_n)}*\varphi(t) - \delta_{ -s+\oplam(t+B)}*\varphi(t)\bigr\|_{b,1,\cA}  \leq  \frac{\dens(\cL) \cdot \| \varphi \|_1}{n}
\end{displaymath}

This shows that $\| \delta_{-s+\oplam(t+U_n)}*\varphi - \delta_{ -s+\oplam(t+B)}*\varphi \|_{b,1, \cA} \to 0$.

Now,
\[
\lim_{k} \frac{1}{|A_k|} \card( \left(-s+\oplam(t+U_n)\right) \cap A_k ) = \dens(\cL) \theta_{H}(U_n) \,,
\]
implies
\[
\lim_{k} \frac{1}{|A_k|} \card( \left(\oplam(t+U_n)\right) \cap A_k ) = \dens(\cL) \theta_{H}(U_n) \,.
\]
Since $U_n$ is open, it follows that $\oplam(t+U_n)$ is minimal density model set, and hence $\delta_{\oplam(t+U_n)} \in \Bap_{\cA}(G)$ by Theorem~\ref{thm: min density model sets}. Since $\delta_{\oplam(t+U_n)}  \in \cM^\infty(G)$ we get $\delta_{-s+\oplam(t+U_n)} \in \Bap_{\cA}(G)$ and hence $\delta_{-s+\oplam(t+U_n)}*\varphi \in Bap_{\cA}(G)$.

Since $\| \delta_{-s+\oplam(t+U_n)}*\varphi - \delta_{ -s+\oplam(t+B)}*\varphi \|_{b,1, \cA} \to 0, \delta_{-s+\oplam(t+U_n)}*\varphi \in Bap_{\cA}(G)$ and $Bap_{\cA}(G)$ is complete by \cite[Thm.~3.10]{LSS}, we get that $\delta_{ -s+\oplam(t+B)}*\varphi  \in Bap_{\cA}(G)$.

This proves (a).

\medskip

\textbf{(b)} Let $(s,t) +\cL \in \cT$. Since $\delta_{-s+\oplam(t+B)} \in \Bap_{\cA}(G)$, the limit
 \[
L:=  \lim_k \frac{1}{|A_k|} \card( (-s+\oplam(t+B)) \cap A_k )
 \]
exists by \cite[Thm.~3.36(b)]{LSS} applied to the trivial character.

For simplicity, we denote by
\[
\cM_{\cA}(\mu) :=  \lim_k \frac{1}{|A_k|} \mu(A_k )
\]
whenever when the limit exists. Now, since $-s+\oplam(t+K_n) \subseteq -s+\oplam(t+B)) \subseteq -s+\oplam(t+U_n)$ we have for all $n$.
\[
\dens(\cL) \theta_H(K_n)= \cM_{\cA} ( \delta_{-s+\oplam(t+K_n)}) \leq L \leq  \cM_{\cA} ( \delta_{-s+\oplam(t+U_n)}) = \dens(\cL) \theta_H(U_n) \,.
\]

Therefore, for all $n$ we have
\begin{align*}
\dens(\cL) \theta_H(K_n) &\leq L \leq  \dens(\cL) \theta_H(U_n) \\
\dens(\cL) \theta_H(K_n) &\leq \dens(\cL) \theta_H(B) \leq  \dens(\cL) \theta_H(U_n) \\
\dens(\cL) \theta_H(U_n)&-\dens(\cL) \theta_H(K_n) < \frac{1}{n}
\end{align*}

It follows that $\left| \dens(\cL) \theta_H(B) - L \right| <\frac{1}{n}$  for all $n$ and hence
\begin{displaymath}
\lim_k \frac{1}{|A_k|} \card( (-s+\oplam(t+B)) \cap A_k ) = L = \dens(\cL) \theta_H(B) \,.
\end{displaymath}

\medskip
\textbf{(c)}

For each $(s,t) +\cL \in \cT$ we have $\delta_{ -s+\oplam(t+B)} \in \Bap_{\cA}(G)$. Therefore, the autocorrelation $\gamma$ of $\delta_{ -s+\oplam(t+B)}$ exists with respect to $\cA$ \cite{LSS}.

We next show that $\gamma=\dens(\cL) \omega_{c(B)}$. This will imply in particular that $\gamma$ is independent of the choice of $(s,t)+\cL \in \cT$.

Note that for each $n$ we have
\begin{equation}\label{EQ3}
\delta_{ -s+\oplam(t+K_n)} \leq \delta_{ -s+\oplam(t+B)} \leq \delta_{ -s+\oplam(t+U_n)} \,.
\end{equation}

By Theorem~\ref{thm: min density model sets} the autocorrelation of $\delta_{ -s+\oplam(t+U_n)}$ exists with respect to $\cA$ and is equal to $\dens(\cL) \omega_{c(U_n)}$.
Same way, by \cite[Thm.~7]{BHS} the autocorrelation of $\delta_{ -s+\oplam(t+K_n)}$ exists with respect to $\cA$ and is equal to $\dens(\cL) \omega_{c(K_n)}$. Therefore, by \eqref{EQ3} we have
\begin{displaymath}
\dens(\cL) \omega_{c(K_n)} \leq \gamma \leq \dens(\cL) \omega_{c(U_n)} \,.
\end{displaymath}

Next, since $U$ is precompact, $\gamma$ is  supported inside the model set $\oplam(U-U)$, and hence inside $L= \pi_{G}(\cL)$.

Let $x \in L$. Then, for all $n$ we have

\begin{displaymath}
\dens(\cL) \theta_H(K_n \cap (x^\star+K_n)) \leq \gamma(\{ x \}) \leq  \dens(\cL) \theta_H(U_n \cap (x^\star+U_n)) \,.
\end{displaymath}

Then, for all $n$ we have
\begin{align*}
 & \left|\gamma(\{ x \})-\dens(\cL) \theta_H(B \cap (x^\star+B)) \right|   \leq \left|\gamma(\{ x\})-\dens(\cL) \theta_H(U_n \cap (x^\star+U_n)) \right| \\
  &+  \left|\dens(\cL)\theta_H(U_n \cap (x^\star+U_n))-\dens(\cL) \theta_H(B \cap (x^\star+B)) \right| \\
   & = \dens(\cL) \theta_H(U_n \cap (x^\star+U_n)) - \gamma(\{ x\}) + \theta_H(U_n \cap (x^\star+U_n))-\dens(\cL) \theta_H(B \cap (x^\star+B)) \\
   &\leq  2 \dens(\cL) \left( \theta_H(U_n \cap (x^\star+U_n))-\theta_H(K_n \cap (x^\star+K_n)) \right)  \\
   &=  2 \dens(\cL) \left( \int_{H} 1_{U_n}(t) 1_{U_n}(t-x^\star) \dd t -\int_{H} 1_{K_n}(t) 1_{K_n}(t-x^\star) \dd t \right) \\
    &=  2 \dens(\cL) \left( \int_{H} 1_{U_n}(t) 1_{U_n}(t-x^\star) \dd t -\int_{H} 1_{K_n}(t) 1_{U_n}(t-x^\star) \dd t \right)\\
    &+ 2 \dens(\cL) \left( \int_{H} 1_{K_n}(t) 1_{U_n}(t-x^\star) \dd t -\int_{H} 1_{K_n}(t) 1_{K_n}(t-x^\star) \dd t \right) \\
    &=  2 \dens(\cL) \left( \int_{H}\bigl( 1_{U_n}(t)- 1_{K_n}(t) \bigr) 1_{U_n}(t-x^\star) \dd t \right) \\
    &+ 2 \dens(\cL) \left( \int_{H} 1_{K_n}(t) \left(1_{U_n}(t-x^\star)- 1_{K_n}(t-x^\star) \right) \dd t \right) \,.\\
\end{align*}

Note here that since $K_n \subseteq U_n$ we have $1_{U_n}(t)- 1_{K_n}(t) \geq 0$ and $1_{U_n}(t-x^\star) \dd t 1_{K_n}(t-x^\star) \geq 0$ for all $t \in H$. Therefore,
\begin{align*}
  \left|\gamma(\{ x\})-\dens(\cL) \theta_H(B \cap (x^\star+B)) \right|  & \leq 2 \dens(\cL) \left( \int_{H} 1_{U_n}(t)- 1_{K_n}(t)\dd t \right)\\
  &+ 2 \dens(\cL) \left( \int_{H} 1_{U_n}(t-x^\star) - 1_{K_n}(t-x^\star)  \dd t \right) \\
  &=4 \dens(\cL) \left( \theta_H(U_n) - \theta_H(K_n)  \right)  < \frac{4 \dens(\cL) }{n} \,.\\
\end{align*}

As this holds for all $n \in N$ it follows that for all $x \in L$ we have
\[
\gamma(\{ x\})= \dens(\cL) \theta_H(B \cap (x^\star+B)) \,.
\]

This proves (c) .

\medskip

\textbf{(d)} Since $c(B)$ is continuous, positive definite and has compact support, we have $\widehat{c(B)} \in L^1(\widehat{H})$ \cite[Lemma~3.6]{CRS}.  Therefore, by \cite[Thm.~5.3]{CRS} we have
\begin{displaymath}
\widehat{\gamma}= \dens(\cL) \widehat{\omega_{c(B)}}=  \dens(\cL)^2 \omega_{\widecheck{c(B)}} \,.
\end{displaymath}

Now, since
\begin{displaymath}
\widecheck{c(B)} = \widecheck{ 1_{B} * \widetilde{1_{B}}}=\left| \widecheck{1_{B}} \right|^2
\end{displaymath}
the claim follows.

\medskip

\textbf{(e)} Let $(s,t) +\cL \in \cT$.

Since $\delta_{-s+\oplam(t+B)} \in \Bap_{\cA}(G)$ the Fourier--Bohr coefficients $a_{\chi}^\cA(\delta_{-s+\oplam(t+B)})$ exist by \cite[Thm.~3.36(b)]{LSS}.

By translation boundedness and the van Hove property we also have
\begin{displaymath}
a_{\chi}^\cA(\delta_{-s+\oplam(t+B)})= \chi(s) a_{\chi}^\cA(\delta_{\oplam(t+B)}) \,.
\end{displaymath}

Let us note first that for all $n$ we have
\begin{align*}
 &\left| a_{\chi}^\cA(\delta_{-s+\oplam(t+B)}) - a_{\chi}^\cA(\delta_{-s+\oplam(t+U_n)})\right|  \\
 &= \lim_k \frac{1}{|A_k|} \left| \int_{A_k} \overline{\chi(t)} \dd \delta_{-s+\oplam(t+B)}(t) - \int_{A_k} \overline{\chi(t)} \dd \delta_{-s+\oplam(t+U_n)}(t) \right|   \\
  & \leq \lim_k \frac{1}{|A_k|}\int_{A_k}  \left| \overline{\chi(t)}  \right| \dd \left|\delta_{-s+\oplam(t+B)} - \delta_{-s+\oplam(t+U_n)} \right|(t)    \\
     &= \lim_k \frac{1}{|A_k|} \left( \delta_{-s+\oplam(t+U_n)} - \delta_{-s+\oplam(t+B)}\right)(A_k)   \,. \\
\end{align*}

Now, since $(s,t)+\cL \in \cT$, the set $-s+\oplam(t+U_n)$ is a minimal density model set. Therefore, by Theorem\ref{thm: min density model sets} and (b) we have
\begin{equation}\label{eq4}
 \begin{split}
\nonumber \left| a_{\chi}^\cA(\delta_{-s+\oplam(t+B)}) - a_{\chi}^\cA(\delta_{-s+\oplam(t+U_n)})\right| &\leq  \dens(\cL) (\theta_H(t+U_n)- \theta_H(t+B)) \\
 &\leq  \dens(\cL) (\theta_H(U_n)- \theta_H(K_n))< \frac{\dens(\cL)}{n} \,.
 \end{split}
\end{equation}

Next, for all $\chi \notin \pi_{\hat{G}}(\cL^0)$, for all $n$ we have by \eqref{eq4}
\begin{align*}
  \left| a_{\chi}^\cA(\delta_{-s+\oplam(t+B)}) \right|  &=  \left| a_{\chi}^\cA(\delta_{-s+\oplam(t+B)}) - a_{\chi}^\cA(\delta_{-s+\oplam(t+U_n)})\right| \leq  \frac{\dens(\cL)}{n} \,.
\end{align*}

Therefore, for all $\chi \notin \pi_{\hat{G}}(\cL^0)$ we have $a_{\chi}^\cA(\delta_{-s+\oplam(t+B)})=0$.

Next, let  $\chi \in \pi_{\hat{G}}(\cL^0)$. Then
\begin{align*}
  &\left| a_{\chi}^\cA(\delta_{-s+\oplam(t+B)})-\dens(\cL)(\chi(s) \chi^\star(t)  ) \int_{B} \chi^\star(r) \dd r \right|    \leq  \left| a_{\chi}^\cA(\delta_{-s+\oplam(t+B)})- a_{\chi}^\cA(\delta_{-s+\oplam(t+U_n)}) \right|\\
  &+ \left| a_{\chi}^\cA(\delta_{-s+\oplam(t+U_n)})-\dens(\cL)(\chi(s)\chi^\star(t)  ) \int_{B} \chi^\star(r) \dd r \right|\\
   &\stackrel{\eqref{eq4}}{\leq}  \frac{\dens(\cL)}{n}+ \left| \chi(s) a_{\chi}^\cA(\delta_{\oplam(t+U_n)})-\dens(\cL)(\chi(s) \chi^\star(t)  ) \int_{B} \chi^\star(r) \dd r \right| \\
  &=  \frac{\dens(\cL)}{n}+ \left|  a_{\chi}^\cA(\delta_{\oplam(t+U_n)})-\dens(\cL)(\chi^\star(t)  ) \int_{B} \chi^\star(r) \dd r \right| \,. \\
\end{align*}

Now, since $\oplam(t+U_n)$ is minimal density model set, by Theorem~\ref{thm: min density model sets} we get
\begin{align*}
 & \left| a_{\chi}^\cA(\delta_{-s+\oplam(t+B)})-\dens(\cL)(\chi(s) \chi^\star(t)  ) \int_{B} \chi^\star(r) \dd r \right|  \\
 &  \leq   \frac{\dens(\cL)}{n}+ \left| \dens(\cL) \int_{t+U_n} \chi^\star(r) \dd r-\dens(\cL)(\chi^\star(t) ) \int_{B} \chi^\star(r) \dd r \right| \\
  &= \frac{\dens(\cL)}{n}+ \dens(\cL)\left|  \int_{U_n} \chi^\star(s+t) \dd s-(\chi^\star(t) ) \int_{B} \chi^\star(r) \dd r \right| \\
   &= \frac{\dens(\cL)}{n}+ \dens(\cL)\left|  \int_{H} \left(1_{U_n}(r)\chi^\star(r) - 1_{B} (r) \chi^\star(r)\right) \dd r \right| \\
      &= \frac{\dens(\cL)}{n}+  \dens(\cL)\int_{H} \left|  1_{U_n}(r) - 1_{B}(r)  \right|\dd r \\
         &= \frac{\dens(\cL)}{n}+ \dens(\cL)\left( \theta_{H}(U_n)-\theta_{H}(B)\right) \leq  \frac{2 \dens(\cL)}{n} \,. \\
\end{align*}

This proves the claim.

\medskip

\textbf{(f)} Follows from \cite[Thm.~3.36]{LSS}.

\medskip

\textbf{(g)} For each $(s,t) +\cL \in \cT$ the measure $\delta_{-s+\oplam(t+B)}$ is Besicovitch almost periodic. Then, by \cite[Thm.~6.13]{LSS} or \cite[Thm.~3.4 and Prop.~6.1]{LSS2} there exists an ergodic measure $m_{(s,t)+\cL}$ on $\XX(-s+\oplam(t+B)) \subseteq \extB$ such that $-s+\oplam(t+B)$ is generic for $m_{(s,t)+\cL}$.  We can consider $m_{(s,t)+\cL}$ as a measure on $\extB$.

We show now that $m_{(s,t)+\cL}$ does not depend on $(s,t)+\cL \in T$. Therefore, we can define $\m:= m_{(s,t)+\cL}$ for one, and hence all $(s,t)+\cL \in \cT$.

\smallskip

For each $n$ pick some $g_n \in \Cc(G)$ such that $1_{K_n} \leq g_n \leq 1_{U_n}$.

Define as usual the measure
\[
\omega_n:= \omega_{g_n}= \sum_{(x,x^\star) \in \cL} g_n(x^\star) \delta_x \,.
\]
Since $g_n \in \Cc(H)$, by Theorem~\ref{thm:omegah in sap} we have $\omega_n \in \SAP(G)$ and hence $\XX(\omega_n)$ is uniquely ergodic \cite{LR,LSS,LSS2,LS}. Let $\varpi_n$ be the unique ergodic measure on $\XX(\omega_n)$.

\smallskip

Note that all the measures $\omega_{g_n}$ are supported inside the Meyer set $\oplam(U)$. As $\|g_n\|_{\infty} =1$, it follows immediately that there exists a compact $K \subseteq G$ and some $C>0$ such that, all the hulls $\XX(\omega_{g_n})$ as well as $\extB$ are subsets of
\[
\cM_{C,K}:= \{ \mu : \| \mu \|_{K} \leq C \}
\]

This set is $G$-invariant and vaguely compact \cite{BL}. Let $\YY$ denote the set of real valued measures in $\cM_{C,K}$. Then, $\YY$ is $G$ invariant, vaguely compact and contains $\XX(g_n)$ and $\extB$. We can consider the measures $\varpi_n$ and $m_{(s,t)+\cL}$ as measures on $\YY$.

Now, as usual, for $\phi \in \Cc(G)$ we define $f_{\phi} : \YY \to \CC$ via
\[
f_{\phi}(\omega)=\omega*\phi(0) \,.
\]
Note that since $\YY$ only consists of real valued measures, we have
\[
\overline{f_{\phi}}= f_{\overline{\phi}} \,.
\]

Consider now, as in Proposition~\ref{prop:generic char}, the algebra $\AAA$ generated by $\{ f_{\phi}: \phi \in \Cc(G) \} \cup \{ 1_{\YY} \}$, which is dense in $C(\YY)$.

We show that for all $f \in \AAA$ and all $(s,t)+\cL \in \cT$ we have
\begin{equation}\label{eq5}
m_{(s,t)+\cL}(f)= \lim_n \varpi_n(f) \,.
\end{equation}
This will show that all $m_{(s,t)+\cL}$ agree on a dense subset of $C(\YY)$, and hence that they are all equal.

\smallskip

Since $m_{(s,t)+\cL}(1_{\YY})=1=\varpi_n(1_{\YY})$ we only need to show that \eqref{eq5} holds for functions $f$ of the form $f= f_{\phi_1}f_{\phi_2}\cdot...\cdot f_{\phi_n}$ for $n \in \NN$ and $\phi_1,.., \phi_n \in \Cc(G)$.

\smallskip

Next, note that $ (s,t)+ \cL \in \cT$ implies
\begin{align*}
 \delta_{-s+\oplam(K_n+t)}  &\leq  \delta_{-s+\oplam(B+t)} \leq   \delta_{-s+\oplam(U_n+t)} \\
 \delta_{-s+\oplam(K_n+t)}  &\leq  T_{-s}\omega_{T_tg_n} \leq   \delta_{-s+\oplam(U_n+t)} \\
\cM_{\cA}(  \delta_{-s+\oplam(U_n+t)}&- \delta_{-s+\oplam(K_n+t)})< \frac{\dens(\cL)}{n} \,.
\end{align*}

This immediately implies
\[
\uM_\cA(| T_{-s}\omega_{T_tg_n}- \delta_{-s+\oplam(B+t)}|) < \frac{\dens(\cL)}{n} \,.
\]

In particular,  for all $\phi \in \Cc(G)$ we have
\begin{equation}\label{eq: mean}
\bigl\| (T_{-s}\omega_{T_tg_n}) *\phi(x) - \delta_{-s+\oplam(B+t)}*\varphi(x) \bigr\|_{b,a,\cA}  < \frac{\dens(\cL)\|\varphi\|_1}{n} \,.
\end{equation}

Let $n \in \NN$ and $\phi_1,.., \phi_l \in \Cc(G)$. Then, similarly to \cite{BHS} we have
\begin{align*}
&\bigl\|  \prod_{k=1}^l f_{\phi_k} (T_x(T_{-s}\omega_{T_tg_n})) -\prod_{k=1}^l f_{\varphi_k} (T_x\delta_{-s+\oplam(B+t)}) \bigr\|_{b,1,\cA}  \\
&\leq \bigl\| \prod_{k=1}^l((T_{-s}\omega_{T_tg_n}))*\phi_k(x)-\prod_{k=1}^l \delta_{-s+\oplam(B+t)}*\phi_k(x)\bigr\|_{b,1,\cA} \\
&\leq \| \sum_{j=1}^l \left( \prod_{k=1}^{j-1} ((T_{-s}\omega_{T_tg_n}))*\phi_k(x)\right)\left(((T_{-s}\omega_{T_tg_n}))*\phi_j(x) \right.  \\
&\left.  - \delta_{-s+\oplam(B+t)}*\phi_j(x)\right) \left( \prod_{k=j+1}^{l}\delta_{-s+\oplam(B+t)}*\phi_k(x)\right) \|_{b,1,\cA} \\
&\leq \| \sum_{j=1}^l \left( \prod_{k=1}^{j-1} \|((T_{-s}\omega_{T_tg_n}))*\phi_k\|_{\infty} \right) \left|((T_{-s}\omega_{T_tg_n}))*\phi_j(x) \right. \\
&\left. - \delta_{-s+\oplam(B+t)}*\varphi_j(x)\right| \left( \prod_{k=j+1}^{l}\|\delta_{-s+\oplam(B+t)}*\varphi_k\|_\infty\right) \|_{b,1,\cA} \,.\\
\end{align*}

Now, since $0 \leq g_n \leq 1_{U_1}$ and $B \subseteq U_1$, the measures $(T_{-s}\omega_{T_tg_n})$ and $\delta_{-s+\oplam(B+t)}$ are equi translation bounded. Therefore, there exists a constant $C$, which only depends on $U_1$ and $\phi_1,.., \phi_l$ such that, for all $n,k$ we have
\begin{align*}
 \|((T_{-s}\omega_{T_tg_n}))*\phi_k\|_{\infty} &\leq C \,, \\
\|\delta_{-s+\oplam(B+t)}*\phi_k\|_\infty& \leq C \,.
\end{align*}

Therefore,
\begin{align}
&\bigl\| \prod_{k=1}^l f_{\phi_k} (T_x(T_{-s}\omega_{T_tg_n})) -\prod_{k=1}^l f_{\phi_k} (T_x\delta_{-s+\oplam(B+t)})\bigr\|_{b,1,\cA}   \nonumber\\
&\leq  C^{l-1} \sum_{j=1}^l \bigl\| ((T_{-s}\omega_{T_tg_n}))*\phi_j(x)- \delta_{-s+\oplam(B+t)}*\phi_j(x)\bigr\|_{b,1,\cA} \nonumber\\
&\leq  \frac{\dens(\cL)\left(\sum_{j=1}^l \|\phi_j\|_1\right) C^{l-1}}{n} \,. \label{eq 6}
\end{align}

Next, by Lemma~\ref{L1} we have
\begin{equation}\label{eq7}
\lim_k \frac{1}{|A_k|} \int_{A_k} \prod_{j=1}^l f_{\phi_j} T_x( (T_{-s}\omega_{T_tg_n})) \dd x= \varpi_n (\prod_{j=1}^l f_{\phi_j}) \,.
\end{equation}
Moreover, since $ (s,t)+ \cL \in \cT$ , the measure $\delta_{-s+\oplam(t+B)}$ is generic for $m_{(s,t)+\cL}$ and hence
\begin{equation}\label{eq8}
\lim_k \frac{1}{|A_k|} \int_{A_k} \prod_{j=1}^l f_{\phi_j} (T_x \delta_{-s+\oplam(t+B)})\dd x = m_{(s,t)+\cL} (\prod_{j=1}^l f_{\varphi_j}) \,.
\end{equation}

Combining \eqref{eq 6}, \eqref{eq7} and \eqref{eq8} we get
\[
\left| m_{(s,t)+\cL} (\prod_{j=1}^l f_{\phi_j}) -\varpi_n (\prod_{j=1}^l f_{\phi_j})  \right| \leq \frac{\dens(\cL)\left(\sum_{j=1}^l \|\phi_j\|_1\right) C^{l-1}}{n} \,.
\]

This shows \eqref{eq5}, and completes the proof of (g).

\medskip

\textbf{(h),(i), (j)} follow now from \cite{LSS,LSS2}.

\end{proof}

\begin{definition} Let $(G,H, \cL)$ be a CPS and $B \subseteq H$ be a precompact Borel set. We denote by $\m_B$ or simply $\m$ the measure given by Theorem~\ref{thm:main} (g).
\end{definition}

\section{Properties of $\m$}\label{Sect: prop m}

Consider a CPS $(G,H,\cL)$ and a precompact Borel set $B \subseteq H$.

Let $\m$ be the ergodic measure on $\extB$ from Theorem~\ref{thm:main} (g) and define
\begin{align*}
\YY_{\m}&:= \{ \Gamma \in \extB : \Gamma \mbox{ is generic for } \m \} \\
\YY_{b,\m}&= \YY_\m \cap \Bap_{\cA,ps} (G) \,.
\end{align*}

The following result follows trivially from translation boundedness.

\begin{lemma}
\begin{itemize}
  \item[(a)] Let $\Gamma \in \YY_{\m}$ and $t \in G$. Then $T_t \Gamma \in \YY_{\m}$.
  \item[(b)] Let $\Gamma \in \YY_{b,\m}$ and $t \in G$. Then $T_t \Gamma \in \YY_{b,\m}$.
\end{itemize}
\end{lemma}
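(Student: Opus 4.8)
The plan is to obtain both statements from the translation-invariance of genericity and of Besicovitch almost periodicity; this invariance rests only on the van Hove property of $\cA$, which is the precise sense in which the lemma is ``trivial from translation boundedness''.

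\textbf{Part (a).} First I would note that $\extB$ is $G$-invariant: it is the closure, in the local rubber topology, of the $G$-invariant family $\{s+\oplam(t+B): s\in G,\, t\in H\}$, and $G$ acts on the rubber topology by homeomorphisms, so its closure is $G$-invariant; hence $T_t\Gamma\in\extB$. Next, fix $f\in C(\extB)$ and set $M:=\|f\|_\infty$, which is finite since $\extB$ is compact. Using that the action is a group action, $T_x(T_t\Gamma)=T_{x+t}\Gamma$, and substituting $y=x+t$ (Haar measure being translation invariant),
\[
\frac{1}{|A_k|}\int_{A_k} f\bigl(T_x(T_t\Gamma)\bigr)\,\dd x \;=\; \frac{1}{|A_k|}\int_{A_k+t} f(T_y\Gamma)\,\dd y \,.
\]
Choosing a compact $K\subseteq G$ with $t,-t\in K$, a direct check gives $(A_k+t)\setminus A_k\subseteq\overline{(A_k+K)\setminus A_k}$ and $A_k\setminus(A_k+t)\subseteq(\overline{G\setminus A_k}-K)\cap A_k$, so both are contained in $\partial^K(A_k)$. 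Hence the right-hand average differs from $\tfrac{1}{|A_k|}\int_{A_k} f(T_y\Gamma)\,\dd y$ by at most $M\,\theta_G(\partial^K(A_k))/|A_k|$, which tends to $0$ by the van Hove property. Since $\Gamma$ is generic for $\m$, $\tfrac{1}{|A_k|}\int_{A_k}f(T_y\Gamma)\,\dd y\to\m(f)$, and therefore $\tfrac{1}{|A_k|}\int_{A_k}f(T_x(T_t\Gamma))\,\dd x\to\m(f)$ as well. As $f\in C(\extB)$ was arbitrary, $T_t\Gamma\in\YY_\m$.

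\textbf{Part (b).} Then I would observe that, by (a), it remains to check $T_t\Gamma\in\Bap_{\cA,ps}(G)$, i.e. $\delta_{T_t\Gamma}=T_t\delta_\Gamma\in\Bap_\cA(G)$ given that $\delta_\Gamma\in\Bap_\cA(G)$. This is just the translation-invariance of $\Bap_\cA(G)$: for $\varphi\in\Cc(G)$ one has $\varphi*(T_t\delta_\Gamma)=T_t(\varphi*\delta_\Gamma)$; a translate of a trigonometric polynomial is again a trigonometric polynomial since $T_t\chi=\overline{\chi(t)}\,\chi$; and $\|\cdot\|_{b,1,\cA}$ is translation invariant on bounded functions by the same van Hove boundary estimate as in (a). Thus a $\|\cdot\|_{b,1,\cA}$-approximation of $\varphi*\delta_\Gamma$ by trigonometric polynomials transports under $T_t$ to one of $\varphi*(T_t\delta_\Gamma)$, so $T_t\delta_\Gamma\in\Bap_\cA(G)$ and hence $T_t\Gamma\in\YY_\m\cap\Bap_{\cA,ps}(G)=\YY_{b,\m}$.

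I do not anticipate a genuine obstacle. The only point needing a line of care is the containment of $(A_k+t)\setminus A_k$ and $A_k\setminus(A_k+t)$ in $\partial^K(A_k)$, combined with the observation that all functions in play ($f$ on the compact hull, and $\varphi*\delta_\Gamma$ together with the approximating trigonometric polynomials) are bounded, so that the boundary contributions are genuinely negligible in the van Hove limit; everything else is formal.
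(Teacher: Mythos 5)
Your proof is correct, and it is exactly the argument the paper has in mind: the paper omits the proof entirely, asserting the lemma ``follows trivially from translation boundedness,'' and your van Hove boundary estimate (the containment of $(A_k+t)\,\Delta\, A_k$ in $\partial^K(A_k)$, plus boundedness of $f$ and of $\varphi*\delta_\Gamma-P$) is precisely the content behind that assertion. Both the genericity part and the translation-invariance of $\Bap_{\cA}(G)$ check out as written.
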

\qed

Note that in general $\YY_\m$ is not closed in $\extB$, but it is closed for regular model sets.

\medskip

We can now list the following properties of $\m$ and the sets $\YY_{\m}$ and $\YY_{b,\m}$.

\begin{theorem}\label{thm:main 2}
\begin{itemize}
  \item[(a)] $m(\YY_\m)= m(\YY_{b,\m}) =1$.
  \item[(b)] For each $\omega \in \YY_\m$ the autocorrelation of $\omega$ exists with respect to $\cA$ and is equal to $\gamma$.
  \item[(c)] For each $\chi \in \widehat{G}$ with $\widehat{\gamma}(\{ \chi \}) \neq 0$ the Fourier--Bohr coefficients
  \[
  a_\chi^\cA : \YY_{b,\m} \to \CC
  \]
  defines a measurable eigenfunction for $\chi$. Moreover, for all $\omega \in \YY_{b,\m}$ we have
  \[
  \widehat{\gamma}(\{\chi \})= \left| a_\chi^{\cA}(\omega) \right|^2 \,.
  \]
  \item[(d)] $\YY_{\m}$ is pseudo-minimal in the following sense: if $\Gamma_1, \Gamma_2 \in \YY_{\m}$ then
  \[
  \Gamma_1 \in \XX(\Gamma_2) \, \mbox{ and } \, \Gamma_{2} \in \XX(\Gamma_1) \,.
  \]
  \item[(e)] There exists a set $\cT \subseteq \TT$ of full measure such that, for all $(s,t)+\cL \in \cT$ we have
  \[
  -s+\oplam(t+B) \in \YY_{b,m} \,.
  \]
  \item[(f)] If $\Gamma \in \YY_\m$ then $\dens_{\cA}(\Gamma)= \dens(\cL) \theta_H(B) \,.$
\end{itemize}
\end{theorem}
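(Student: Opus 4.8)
The plan is to deduce everything from Theorem~\ref{thm:main} together with the structural results of \cite{LSS,LSS2} on Besicovitch almost periodic measures and their hulls, once we know that $\m$ is an ergodic measure on $\extB$ with pure point dynamical spectrum. For part~(a): by definition $\YY_{b,\m} = \YY_\m \cap \Bap_{\cA,ps}(G)$, and $m(\Bap_{\cA,ps}(G) \cap \extB) = 1$ by Theorem~\ref{thm:main}~(j); so it suffices to show $m(\YY_\m) = 1$. This is exactly the statement that $m$-almost every point is generic for $m$, which holds because $(\extB, G, \m)$ is an ergodic TMDS over a $\sigma$-compact group and $\cA$ is a tempered van Hove sequence, so the pointwise ergodic theorem of \cite{Lin} applies to a countable dense subalgebra of $C(\extB)$ (the metrisability from the $SS1$ lemma gives separability); intersecting the full-measure sets and invoking Proposition~\ref{prop:generic char2} yields $m(\YY_\m)=1$. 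Part~(e) is then immediate: take $\cT$ from Theorem~\ref{thm:main}; for $(s,t)+\cL \in \cT$ we have $-s+\oplam(t+B) \in \Bap_{\cA,ps}(G)$ by~(a) there and it is generic for $\m$ by~(g) there, hence it lies in $\YY_\m \cap \Bap_{\cA,ps}(G) = \YY_{b,\m}$.

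For parts~(b), (c) and~(f): if $\omega \in \YY_\m$ then $\omega$ is generic for the ergodic measure $\m$, and the autocorrelation of a generic point equals the autocorrelation of the measure (a standard computation: $\gamma_k(\varphi * \tilde\varphi) = \tfrac{1}{|A_k|}\int_{A_k} |\omega * \varphi|^2 - o(1)$ converges by the ergodic theorem to $\int f_{|\varphi|^2}\, d\m$, and this defines $\widehat\gamma$ via Proposition~\ref{prop:generic char}); since the autocorrelation of $(\extB,G,\m)$ is $\gamma$ by Theorem~\ref{thm:main}~(i), we get~(b). For~(f), apply this with the trivial character, or simply note that $\omega \in \YY_{b,\m}$ means $\omega \in \Bap_{\cA,ps}(G)$ so by \cite[Thm.~3.36]{LSS} the density $\lim_k \tfrac{1}{|A_k|}\card(\supp\omega \cap A_k) = a_0^\cA(\omega) = \gamma(\{0\})^{1/2}\cdot(\dots)$; more directly, $a_0^\cA(\omega)$ is the $\m$-average of $f_0$, which is $\dens(\cL)\theta_H(B)$ by genericity and Theorem~\ref{thm:main}~(b). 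For~(c): on $\YY_{b,\m} \subseteq \extB \cap \Bap_{\cA,ps}(G)$ the map $\omega \mapsto a_\chi^\cA(\omega)$ is well defined and measurable by Theorem~\ref{thm:main}~(k); that it is an eigenfunction with eigenvalue $\chi$ follows from $a_\chi^\cA(T_r\omega) = \chi(r)\, a_\chi^\cA(\omega)$, a direct consequence of translation boundedness and the van Hove property (shift the domain $A_k$ by $r$ and use $\theta_G(\partial^K A_k)/|A_k| \to 0$); and the identity $\widehat\gamma(\{\chi\}) = |a_\chi^\cA(\omega)|^2$ is the Consistent Phase Property, which holds for every Besicovitch almost periodic measure by \cite[Thm.~3.36]{LSS}, hence for every $\omega \in \YY_{b,\m}$.

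For part~(d), pseudo-minimality: let $\Gamma_1, \Gamma_2 \in \YY_\m$. Both are generic for the same ergodic measure $\m$, so for every $f \in C(\extB)$ the orbit averages of $f$ along $\Gamma_1$ and along $\Gamma_2$ converge to the common value $\m(f)$; in particular $\m$ is the unique $G$-invariant measure supported on $\overline{\{T_r\Gamma_1 : r \in G\}} = \XX(\Gamma_1)$ that this orbit ``sees'', and $\supp(\m) \subseteq \XX(\Gamma_1)$. Since $\Gamma_2 \in \supp(\m)$ — a generic point always lies in the support of the measure it is generic for, because any neighbourhood of $\Gamma_2$ must receive positive average, hence positive $\m$-mass — we get $\Gamma_2 \in \XX(\Gamma_1)$, and symmetrically $\Gamma_1 \in \XX(\Gamma_2)$. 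The main obstacle I anticipate is part~(d): one must be careful that $\YY_\m$ is genuinely not closed (as the remark preceding the theorem warns), so $\XX(\Gamma_1)$ may be strictly larger than $\overline{\YY_\m}$-type sets and need not contain $\YY_\m$; the clean route is precisely through $\supp(\m)$, using that genericity forces membership in the support and that the support is a closed invariant set contained in every $\XX(\Gamma)$ with $\Gamma$ generic. Everything else is bookkeeping on top of Theorem~\ref{thm:main} and the cited results.
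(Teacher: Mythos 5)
Your proposal follows the paper's proof essentially step for step: (a) via $\m(\extB\cap\Bap_{\cA}(G))=1$ together with the pointwise ergodic theorem applied to a countable dense subset of $C(\extB)$ and Proposition~\ref{prop:generic char2}; (b) by matching the orbit averages of $f_\varphi\,\overline{T_tf_\psi}$ along a generic point with the autocorrelation of $(\extB,G,\m)$; (c) from the consistent phase property for Besicovitch almost periodic measures; (d) via $\supp(\m)\subseteq\XX(\Gamma_i)$ and the assertion that a generic point lies in $\supp(\m)$, which is precisely the argument the paper gives (so the delicacy you flag there is shared by the paper, not introduced by you); (e) directly from Theorem~\ref{thm:main}.

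The one place where your write-up does not cover the stated claim is (f): the statement is for all $\Gamma\in\YY_{\m}$, but every route you offer goes through $a^{\cA}_0$ and hence through $\Bap_{\cA,ps}(G)$, which only handles $\YY_{b,\m}$ (and the expression $\gamma(\{0\})^{1/2}\cdot(\dots)$ is not right in any case: $|a^{\cA}_0|^2$ computes $\widehat{\gamma}(\{0\})$, not $\gamma(\{0\})$). The paper's argument is shorter and avoids Besicovitch almost periodicity entirely: by Theorem~\ref{thm:main}~(c) one has $\gamma(\{0\})=\dens(\cL)\,\theta_H(B)$, and since by your part~(b) the autocorrelation of any $\Gamma\in\YY_{\m}$ equals $\gamma$, which is a pure point measure supported in the uniformly discrete set $\oplam(U-U)$, one reads off $\dens_{\cA}(\Gamma)=\gamma(\{0\})$ by testing against a function of small support at the origin. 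This is a one-line repair rather than a change of approach, but as written your (f) proves less than what is claimed.
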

\begin{proof}
\textbf{(a)} By \cite[Theorem.~6.10]{LSS} we have $m(\extB \cap \Bap_{\cA}(G))=1$.

Next, since $G$ is second countable, there exists a countable dense set $Q \subseteq C(\extB)$. By Birkhoff ergodic Theorem, for each $f \in Q$, there exists a set $X_{f} \subseteq \YY$ of full measure, such that, for all $\omega \in X_{f}$ we have
\[
\int_{\YY} f(\nu) \dd \m(\nu) = \lim_k \frac{1}{|A_k|} \int_{A_k} f(T_t\omega) \dd t  \,.
\]

Then, the set
\[
X:= \bigcup_{f \in Q} X_f
\]
has full measure in $\extB$ and , by Prop~\ref{prop:generic char2}, each $\omega \in X$ is generic for $\m$.

Therefore, $X \subseteq \YY_{\m}$. Since $\m(X)=\m(\extB \cap \Bap_{\cA}(G))=1$, the claim follows.

\medskip

\textbf{(b)} By \cite{BL}, for all $\psi, \varphi \in \Cc(G)$ and all $t \in G$ we have
\[
\gamma*\varphi*\tilde{\psi}(t)= \langle f_\varphi , T_t f_\psi \rangle = \int_{\extB} f_\varphi(\omega) \overline{T_t f_\psi (\omega)} \dd \m(\omega) \,.
\]

Next, let $\mu \in \YY_\m$. Then,
\begin{align*}
 \gamma*\varphi*\tilde{\psi}(t)&= \int_{\extB} f_\varphi(\omega) \overline{T_t f_\psi (\omega)} \dd m(\omega) \\
  &= \lim_m \frac{1}{|A_k|} \int_{A_k} f_\varphi(T_s \mu) \overline{T_t f_\psi (T_s \mu)} \dd s\\
  &= \lim_m \frac{1}{|A_k|} \int_{A_k} \mu*\varphi(s) \overline{  \mu *\psi (s-t)}  \dd s\\
&= \lim_m \frac{1}{|A_k|} \int_{A_k} \mu*\varphi(s) \widetilde{  \mu *\psi (t-s)}  \dd s \,.\\
\end{align*}

Then, by \cite{LSS2} the autocorrelation of $\mu$ exists with respect to $\cA$ and is equal to $\gamma$.

\textbf{(c)} Follows from \cite[Theorem.~6.10]{LSS}.

\medskip

\textbf{(d)}We have $\m(\XX(\Gamma_{2})) \in
\{0,1\}$. Since $\Gamma_2$ is generic for $\m$ we must have $\m(\XX(\Gamma_{2}))\neq 0$ and hence $\m(\XX(\Gamma_{2}))=1$.

This implies that $\supp(\m) \subseteq \XX(\Gamma_2)$.

Then, using that $\Gamma_1$ is generic for $\m$ we have
\[
\Gamma_1 \in \supp(\m) \subseteq \XX(\Gamma_2) \,.
\]

Switching the roles of $\Gamma_1, \Gamma_2$ gives the other implication.

\medskip

\textbf{(e)} Follows from Theorem~\ref{thm:main}.

\textbf{(f)} We have
\[
\dens(\cL) \theta_H(B)= \gamma(\{ 0\})\,.
\]
Now, since $\gamma$ is the autocorrelation of $\Gamma$ we have
\[
\gamma(\{0\})= \dens_{\cA}(\Gamma) \,.
\]

\end{proof}

\section{The set $\YY_{app}$ and the relation to $\YY_{b,m}$}

Our next goal is to try to define a factor mapping $\pi : (\extB, G, \m)$ to a torus. We could try to define $\pi$ in a natural way from the set
\[
Z:= \{ -s+\oplam(t+B) : (s,t)+\cL \in \cT \} \,,
\]
but this is problematic. Indeed, while $\cT$ has full measure in $\TT$, we see no reason why $Z$ would have full measure in $(\extB, G, \m)$. Moreover, elements in $Z$ can come from different $(s,t)+\cL \in \cT$, which means that the mapping $\pi$ could only be defined on a factor of $\TT$.

Finally, the situation is much worse: many model sets with compact window are known to be hereditary. Then, given an arbitrary $\Gamma \in Z$, all the subsets $\Lambda \subseteq \Gamma$ with $\dens(\Lambda)= \dens(\Gamma)$ become generic for $\m$, and should also be included somehow in $Z$. This is exactly what we do in this section.

In this section, we define a new set $\YY_{app}$. Heuristically, this set consists of all $\Lambda$ for which there exists some $(s,t)+\cL$ in $\TT$, such that
\begin{itemize}
  \item{} $-s+\oplam(t+B)$ is generic for $\m$.
  \item{} $\dens(\Lambda \Delta (-s+\oplam(t+B)))=0$.
\end{itemize}

To identify these points, we rely on the proof of Theorem~\ref{thm:main}: in the notation of that Theorem, for any such $(s,t)$, the measure $\delta_{\Gamma}$ (and hence $\delta_\Lambda$) must be the limit in the mean of $T_{s} \omega_{g_n}$. This is exactly how we are going to define $\YY_{app}$, as being the set of pointsets which can be approximated this way.

With this definition we will show that $\YY_{app} \subseteq \YY_{b,\m}$ and that $\YY_{app}$ does indeed contain all elements of the form $\{ -s+\oplam(t+B) : (s,t)+\cL \in \cT\}$. Next, we define in the natural way a mapping from $\YY_{app}$ to a factor of $\TT$. In the case of compact windows, we then show that $\YY_{app}=\YY_{b,\m}$ consist exactly of the elements in $\extB$ of maximal density, and that the mapping $\pi$ is continuous. Since $\m(\YY_{b,\m})=1$, we get that this is a Borel factor map, with very nice properties.

\bigskip

Let us proceed to the construction and study of $\YY_{app}$.

As usual, we fix a  $(G,H,\cL)$ and a precompact Borel set $B \subseteq H$, $\extB$  denotes the extended hull
and $\m$ is the ergodic measure from Theorem~\ref{thm:main} (g).

As in the proof of Theorem~\ref{thm:main}, pick compact set $K_n$ and open precompact set $U_n \subseteq U$ such that $K_n \subseteq B \subseteq U_n$ and
\[
\theta_{H} (U_n \backslash K_n) < \frac{1}{n} \,.
\]
Let $g_n \in \Cc(H)$ be so that $1_{K_n} \leq g_n \leq 1_{U_n}$.

\smallskip

We start by giving an intrinsic characterisation of $g_n$, which will allow us show below that all our definition are independent of the choice of $K_n,U_n$ and $g_n$, respectively.

\begin{lemma} \label{l3}
\begin{itemize}
  \item[(a)]$g_n$ converges to $1_{B}$ in $L^1(H)$.
  \item[(b)] If $h_n \in \Cc(H)$ converges to $1_B$ in $L^1(H)$ then, for all $(s,t) \in G \times H$ we have
  \[
 \lim_n \uM_{\cA}(\left| T_{-s}\omega_{T_t g_n} - T_{-s}\omega_{T_t h_n}\right|) = 0 \,.
  \]
\end{itemize}
\end{lemma}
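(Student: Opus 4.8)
The plan is to prove (a) first, since it is the easy half, and then use it together with the structural properties of $\omega_h$ to deduce (b).

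For (a): By construction $1_{K_n} \leq g_n \leq 1_{U_n}$ and $K_n \subseteq B \subseteq U_n$ with $\theta_H(U_n \backslash K_n) < \tfrac1n$. Hence $|g_n - 1_B| \leq 1_{U_n} - 1_{K_n} = 1_{U_n \backslash K_n}$ pointwise, so $\|g_n - 1_B\|_{L^1(H)} \leq \theta_H(U_n \backslash K_n) < \tfrac1n \to 0$. This is immediate and requires no further comment.

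For (b): The key point is to control $\uM_{\cA}(|T_{-s}\omega_{T_tg_n} - T_{-s}\omega_{T_th_n}|)$ by the $L^1(H)$-distance of the profiles. First I would reduce to the untranslated case: translation by $-s$ on $G$ does not change the total variation restricted to a van Hove sequence up to the van Hove property, so it suffices to bound $\uM_{\cA}(|\omega_{T_tg_n} - \omega_{T_th_n}|) = \uM_{\cA}(|\omega_{T_t(g_n - h_n)}|)$; and since $T_t$ acts on the profile, replacing $g_n - h_n$ by $\phi_n := T_t(g_n-h_n)$ we have $\|\phi_n\|_{L^1(H)} = \|g_n - h_n\|_{L^1(H)} \leq \|g_n - 1_B\|_{L^1(H)} + \|h_n - 1_B\|_{L^1(H)} \to 0$ by part (a) and the hypothesis on $h_n$. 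So it remains to prove the estimate
\[
\uM_{\cA}\bigl( \left| \omega_{\phi_n} \right| \bigr) \leq \dens(\cL)\, \|\phi_n\|_{L^1(H)} + o(1),
\]
or more precisely some bound of the form $\uM_{\cA}(|\omega_{\phi}|) \leq C\|\phi\|_{L^1(H)}$ valid for all continuous $\phi$ supported in a fixed compact set (all $g_n - h_n$ are supported in the fixed compact $\overline{U} + (\supp g_1 \cup \supp h_1)$-type set, up to the translate $t$, so equicompact support is available).

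To get this last estimate I would use the standard fact that for $\phi \in \Cc(H)$ the measure $\omega_{|\phi|} = \sum_{x\in L} |\phi(x^\star)| \delta_x$ is strong almost periodic (Theorem~\ref{thm:omegah in sap}), hence has a uniform mean, and that mean equals $\dens(\cL) \int_H |\phi(y)|\,\dd y = \dens(\cL)\|\phi\|_{L^1(H)}$ by the standard density/Weyl-averaging formula for weighted model combs (this is exactly the $n=1$ instance of the covariogram computation used in the proof of Theorem~\ref{thm:main}(b),(c), applied to the profile $|\phi|$ rather than to an indicator). Since $|\omega_{\phi_n}| \leq \omega_{|\phi_n|}$ as positive measures, $\uM_{\cA}(|\omega_{\phi_n}|) \leq \uM_{\cA}(\omega_{|\phi_n|}) = \dens(\cL)\|\phi_n\|_{L^1(H)} \to 0$, which is the claim. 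Alternatively, if one wants to avoid invoking uniform mean existence, one can bound directly: $|\omega_{\phi_n}|(A_k) = \sum_{x \in L \cap A_k} |\phi_n(x^\star)| \leq \|\phi_n\|_\infty \cdot \card(\oplam(C) \cap A_k)$ where $C$ is a fixed compact containing all $\supp \phi_n$, which only gives a $\|\phi_n\|_\infty$ bound; the $L^1$ bound genuinely needs the equidistribution of $\{x^\star : x \in L \cap A_k\}$ in $C$, which is precisely the content of Weyl's theorem / uniform distribution underlying Theorem~\ref{thm:moody}, so the cleanest route is through Theorem~\ref{thm:omegah in sap} and the mean formula.

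The main obstacle is the last step: passing from an $\|\phi_n\|_\infty$-type trivial bound to the $\|\phi_n\|_{L^1(H)}$ bound, which is where all the arithmetic content (equidistribution of the star-images of lattice points inside a van Hove window) sits. Everything else — the reduction via the van Hove property to kill the $G$-translate by $-s$, the triangle inequality splitting into $\|g_n - 1_B\|$ and $\|h_n - 1_B\|$, and the monotonicity $|\omega_\phi| \leq \omega_{|\phi|}$ — is routine. One should also be a little careful that the family $\{g_n - h_n\}$ (after translating by $t$) has support in a common compact set so that $\dens(\cL)$ and the implied constants are uniform in $n$; this follows since all $g_n, h_n$ are eventually supported in $\overline{U}$ and $B \subseteq U$ with $U$ precompact.
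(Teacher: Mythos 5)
Your proposal is correct and follows essentially the same route as the paper: part (a) is the identical pointwise bound $|g_n-1_B|\leq 1_{U_n}-1_{K_n}$, and for part (b) the paper likewise reduces to the exact identity $\left|T_{-s}\omega_{T_tg_n}-T_{-s}\omega_{T_th_n}\right|=T_{-s}\omega_{|T_tg_n-T_th_n|}$ and then invokes the mean formula $\uM_{\cA}(\omega_{|\phi|})=\dens(\cL)\|\phi\|_{L^1(H)}$ for $\phi\in\Cc(H)$ (cited to the density formula for weighted model combs), which is exactly the equidistribution input you identify as the crux. The only cosmetic difference is that you state the monotonicity $|\omega_\phi|\leq\omega_{|\phi|}$ where the paper uses the corresponding equality for pure point measures supported on $L$; nothing of substance changes.
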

\begin{proof}
\textbf{(a)}
We have $1_{U_n} \leq g_n \leq 1_{K_n}$. Therefore
\begin{align*}
g_n - 1_B & \leq 1_{U_n}-1_{B} \\
1_{B}-g_n &\leq 1_{B}-1_{K_n} \,.
\end{align*}

Since $ 1_{U_n}-1_{B} \geq 0$ and $1_{B}-1_{K_n} \geq 0$, we immediately get
\[
\left| g_n -1_{B} \right| \leq \max\{  1_{U_n}-1_{B}, 1_{B}-1_{K_n} \} \leq \left(1_{U_n}-1_{B}\right) + \left(1_{B}-1_{K_n} \right) =1_{U_n} -1_{K_n}\,.
\]

We thus have
\[
\| g_n -1_{B} \|_1 \leq \theta_H(U_n)-\theta_H(K_n) < \frac{1}{n} \,.
\]

\textbf{(b)} Let $(s,t) \in G \times H$ be arbitrary. Then
\begin{align*}
\left|T_{-s}\omega_{T_t g_n} - T_{-s}\omega_{T_t h_n}\right| &= \left| \left(T_{-s} \sum_{x \in L} (T_tg_n(x^\star) \delta_x) \right)-\left(T_{-s} \sum_{x \in L} (T_th_n(x^\star) \delta_x) \right) \right| \\
&= \left| T_{-s} \left( \sum_{x \in L} \bigl(T_tg_n(x^\star) -T_th_n(x^\star)\bigr) \delta_x \right) \right| \\
&=   T_{-s} \left|\sum_{x \in L} \bigl(T_tg_n(x^\star) -T_th_n(x^\star)\bigr) \delta_x  \right| \\
&=   T_{-s} \sum_{x \in L}\left| T_tg_n(x^\star) -T_th_n(x^\star) \right| \delta_x  \\
&= T_{-s}\omega_{|T_tg_n-T_th_n|} \,.
\end{align*}

Since $|T_tg_n-T_th_n| \in \Cc(H)$ we get \cite{CRS}
\begin{align*}
&\uM_{\cA}(\left| T_{-s}\omega_{T_t g_n} - T_{-s}\omega_{T_t h_n}\right|)= \dens(\cL) \int_{H} |T_tg_n-T_th_n| (y) \dd y \\
&= \dens(\cL) \int_{H} |g_n(y)-h_n(y)| \dd y \\
&= \dens(\cL) \| g_n -h_n \|_1 \leq \dens(\cL) \left( \|g_n -1_B \|_1+\|h_n -1_{B} \|_1 \right) \,.
\end{align*}
\end{proof}

\medskip

As an immediate consequence we get.

\begin{corollary}\label{C1} Let $h_n \in \Cc(H)$ be so that $\|h_n -1_{B} \|_1 \to 0$, let $\Gamma \in \extB$ and $(s,t) \in G \times H$.  Then
\begin{displaymath}
\lim_n \uM_{\cA}(\left| T_{-s}\omega_{T_t g_n} - \delta_{\Gamma}\right|) = 0 \,.
\end{displaymath}
if and only if
\begin{displaymath}
\lim_n \uM_{\cA}(\left| T_{-s}\omega_{T_t h_n} - \delta_{\Gamma}\right|) = 0 \,.
\end{displaymath}
\end{corollary}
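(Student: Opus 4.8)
The plan is to deduce this immediately from Lemma~\ref{l3}, using only that the upper mean $\uM_{\cA}(|\cdot|)$ is subadditive. First I would note that the fixed sequence $(g_n)$ above satisfies $1_{K_n}\le g_n\le 1_{U_n}$, so $\|g_n-1_B\|_1\to 0$ by Lemma~\ref{l3}(a); since by hypothesis $\|h_n-1_B\|_1\to 0$ as well, Lemma~\ref{l3}(b) applies to $(h_n)$ and yields
\[
\lim_n \uM_{\cA}\bigl(\left| T_{-s}\omega_{T_t g_n} - T_{-s}\omega_{T_t h_n}\right|\bigr) = 0 \,.
\]

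Next I would record the triangle inequality for the upper mean: for any two measures $\mu,\nu$ one has $|\mu+\nu|\le|\mu|+|\nu|$ as positive measures, hence $|\mu+\nu|(A_k)\le|\mu|(A_k)+|\nu|(A_k)$ for every $k$, and dividing by $|A_k|$ and taking $\limsup_k$ gives $\uM_{\cA}(|\mu+\nu|)\le\uM_{\cA}(|\mu|)+\uM_{\cA}(|\nu|)$. Writing $T_{-s}\omega_{T_t h_n} - \delta_{\Gamma} = \bigl(T_{-s}\omega_{T_t h_n} - T_{-s}\omega_{T_t g_n}\bigr) + \bigl(T_{-s}\omega_{T_t g_n} - \delta_{\Gamma}\bigr)$ and applying this, we obtain
\[
\uM_{\cA}\bigl(\left| T_{-s}\omega_{T_t h_n} - \delta_{\Gamma}\right|\bigr) \le \uM_{\cA}\bigl(\left| T_{-s}\omega_{T_t g_n} - T_{-s}\omega_{T_t h_n}\right|\bigr) + \uM_{\cA}\bigl(\left| T_{-s}\omega_{T_t g_n} - \delta_{\Gamma}\right|\bigr) \,,
\]
together with the same bound with the roles of $g_n$ and $h_n$ interchanged.

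Finally, I would take $\limsup_n$ in both inequalities. Since the mixed term tends to $0$ by the first display, this gives
\[
\limsup_n \uM_{\cA}\bigl(\left| T_{-s}\omega_{T_t h_n} - \delta_{\Gamma}\right|\bigr) = \limsup_n \uM_{\cA}\bigl(\left| T_{-s}\omega_{T_t g_n} - \delta_{\Gamma}\right|\bigr) \,,
\]
and since both quantities are nonnegative, one of these $\limsup$s vanishes if and only if the other does, which is exactly the asserted equivalence of the two limits being $0$.

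I expect essentially no obstacle here: the corollary is a formal consequence of Lemma~\ref{l3} and the subadditivity of the upper mean. The only point worth checking carefully is that $T_{-s}\omega_{T_t g_n}$, $T_{-s}\omega_{T_t h_n}$ and $\delta_{\Gamma}$ are genuine translation bounded measures, so that the differences and their total variations are well defined; this is immediate since $g_n,h_n\in\Cc(H)$ and $\Gamma$ ranges over the (uniformly discrete) elements of $\extB$.
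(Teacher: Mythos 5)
Your proof is correct and is exactly the argument the paper intends: the corollary is stated there as an immediate consequence of Lemma~\ref{l3}, with the mixed term controlled by Lemma~\ref{l3}(b) (applicable since Lemma~\ref{l3}(a) gives $\|g_n-1_B\|_1\to 0$) and the conclusion following from subadditivity of $\uM_{\cA}(|\cdot|)$. Your write-up simply supplies the details the paper leaves implicit, and they all check out.
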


\smallskip

We can now define $\YY_{app}$.

\begin{definition}
Define $\YY_{app}$ to be the set of all $\Gamma \in \extB$ for which, there exists some $(s,t) +\cL \in \TT$ such that
\begin{equation}\label{eq9}
\lim_n \uM_{\cA}(\left| T_{-s}\omega_{T_t g_n} - \delta_{\Gamma}\right|) = 0 \,.
\end{equation}
\end{definition}

\begin{remark} Corollary~\ref{C1} says that $\YY_{app}$ does not depend on the choice of $g_n$. Moreover, for each $\Gamma \in \YY_{app}$ the choice of $(s,t) \in G \times H$ such that \eqref{eq9} holds does not depend on $g_n$.

Moreover, if instead of $g_n$ we would use any $h_n \in \Cc(H)$ such that $h_n \to 1_{B}$ in $L^1(H)$, we would get exactly the same definition for $\YY_{app}$ and exactly the same choices of $(s,t)$.
\end{remark}

\medskip

Since $\uM_{\cA}$ is translation invariant for translation bounded measures, we immediately get:
\begin{lemma}\label{l4} Let $\Gamma \in \YY_{app}$ and $t \in G$. Then $T_t \Gamma \in \YY_{app}$. \qed
\end{lemma}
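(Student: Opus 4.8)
The plan is to unwind the definition of $\YY_{app}$ and to use the translation invariance of $\uM_{\cA}$ on translation bounded measures, exactly as the sentence preceding the statement asserts. Let $\Gamma \in \YY_{app}$ and $t \in G$ be given. By definition there is some $(s,r)+\cL \in \TT$ with
\[
\lim_n \uM_{\cA}\bigl(\left| T_{-s}\omega_{T_{r} g_n} - \delta_{\Gamma}\right|\bigr) = 0 \,.
\]
First I would apply the translation $T_t$ inside the modulus. Using $T_t T_{-s}=T_{t-s}=T_{-(s-t)}$, the identity $T_t \delta_{\Gamma}=\delta_{T_t\Gamma}$, and $\left| T_t\nu\right| = T_t\left|\nu\right|$ for any measure $\nu$, one obtains
\[
T_t\left| T_{-s}\omega_{T_{r}g_n}-\delta_{\Gamma}\right| = \left| T_{-(s-t)}\omega_{T_{r}g_n}-\delta_{T_t\Gamma}\right| \,.
\]

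Next I would invoke that $\uM_{\cA}(T_t\mu)=\uM_{\cA}(\mu)$ for every positive translation bounded measure $\mu$, which is the standard consequence of the van Hove property of $\cA$ already used in the remark preceding the definition of $\YY_{app}$. Applying it to $\mu = \left| T_{-(s-t)}\omega_{T_{r}g_n}-\delta_{T_t\Gamma}\right|$ and combining with the previous display gives
\[
\uM_{\cA}\bigl(\left| T_{-(s-t)}\omega_{T_{r}g_n}-\delta_{T_t\Gamma}\right|\bigr) = \uM_{\cA}\bigl(\left| T_{-s}\omega_{T_{r}g_n}-\delta_{\Gamma}\right|\bigr) \,,
\]
which tends to $0$ as $n \to \infty$ by assumption.

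Finally, since $\extB$ is the local rubber closure of the $G$-invariant set $\{-s'+\oplam(t'+B): s'\in G,\ t'\in H\}$, it is itself invariant under all translations $T_t$, so $T_t\Gamma \in \extB$. Hence the pair $(s-t,r)+\cL \in \TT$ witnesses condition \eqref{eq9} for $T_t\Gamma$, and therefore $T_t\Gamma \in \YY_{app}$. I do not expect any genuine obstacle here: the only ingredient beyond routine bookkeeping with the operators $T_\bullet$ and the identity $\left|T_t\nu\right| = T_t\left|\nu\right|$ is the translation invariance of $\uM_{\cA}$, which is already in force; everything else is a direct rewriting of the defining condition of $\YY_{app}$.
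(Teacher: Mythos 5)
Your argument is correct and is exactly the one the paper intends: the lemma is stated with no proof beyond the preceding remark that $\uM_{\cA}$ is translation invariant on translation bounded measures, and your write-up simply spells out that one-line justification (shifting the witness from $(s,r)+\cL$ to $(s-t,r)+\cL$, using $\lvert T_t\nu\rvert = T_t\lvert\nu\rvert$ and the $G$-invariance of $\extB$). No gaps; this matches the paper's approach.
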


Now, we can prove the following result:

\begin{proposition}
\begin{itemize}
  \item [(a)] $\YY_{app} \subseteq \YY_{b,\m}$.
  \item [(b)] There exists a set $\cT \subseteq \TT$ of full measure such that, for all $(s,t)+\cL \in \cT$ we have $-s+\oplam(t+B) \in \YY_{app}$.
\end{itemize}
\end{proposition}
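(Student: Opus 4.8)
The plan is to deduce both parts from constructions already used for Theorem~\ref{thm:main}; since (b) is essentially read off from that proof, I would do it first. For (b) I would take $\cT$ to be the full-measure subset of $\TT$ produced by Theorem~\ref{thm:main}, fix $(s,t)+\cL\in\cT$, and set $\Gamma:=-s+\oplam(t+B)$, which lies in $\extB$ by definition of the extended hull. From $1_{K_n}\le g_n\le 1_{U_n}$ and $\omega_{T_tg_n}=\sum_{x\in L}g_n(x^\star-t)\delta_x$ one gets the sandwich $\delta_{-s+\oplam(t+K_n)}\le T_{-s}\omega_{T_tg_n}\le\delta_{-s+\oplam(t+U_n)}$, and the same inequalities hold with $T_{-s}\omega_{T_tg_n}$ replaced by $\delta_\Gamma$; hence the positive measure $|T_{-s}\omega_{T_tg_n}-\delta_\Gamma|$ is dominated by $\delta_{-s+\oplam(t+U_n)}-\delta_{-s+\oplam(t+K_n)}$, whose mean along $\cA$ equals $\dens(\cL)(\theta_H(U_n)-\theta_H(K_n))<\dens(\cL)/n$ because $(s,t)+\cL\in\cT$ (Theorem~\ref{thm:moody} applied to $K_n$ and $U_n$). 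Thus $\uM_\cA(|T_{-s}\omega_{T_tg_n}-\delta_\Gamma|)\to 0$, which is exactly \eqref{eq9}, so $\Gamma\in\YY_{app}$; this computation is already carried out inside the proof of Theorem~\ref{thm:main}(g).

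For (a) I would fix $\Gamma\in\YY_{app}$ with a witness $(s,t)+\cL\in\TT$ and write $\mu_n:=T_{-s}\omega_{T_tg_n}$, so that $\uM_\cA(|\mu_n-\delta_\Gamma|)\to 0$; the two things to prove are $\delta_\Gamma\in\Bap_\cA(G)$ and that $\Gamma$ is generic for $\m$. For the first, since $T_tg_n\in\Cc(H)$, Theorem~\ref{thm:omegah in sap} gives $\omega_{T_tg_n}\in\SAP(G)$, hence $\mu_n\in\SAP(G)\subseteq\Bap_\cA(G)$ (strong almost periodicity implies Weyl, hence Besicovitch, almost periodicity, and all these classes are translation invariant; compare the proof of Theorem~\ref{thm: min density model sets}(a)). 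For any $\varphi\in\Cc(G)$ one has $\mu_n*\varphi\in Bap_\cA(G)$ and $\|\mu_n*\varphi-\delta_\Gamma*\varphi\|_{b,1,\cA}\le\|\varphi\|_1\,\uM_\cA(|\mu_n-\delta_\Gamma|)\to 0$, so by completeness of $Bap_\cA(G)$ \cite{LSS} we get $\delta_\Gamma*\varphi\in Bap_\cA(G)$. As $\varphi$ was arbitrary, $\delta_\Gamma\in\Bap_\cA(G)$, i.e.\ $\Gamma\in\Bap_{\cA,ps}(G)$; this is the same argument as in the proof of Theorem~\ref{thm:main}(a).

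For genericity, since $(\extB,G,\m)$ is an ergodic TMDS consisting of real-valued measures, Proposition~\ref{prop:generic char} reduces the claim to verifying, for all $l\in\NN$ and $\varphi_1,\dots,\varphi_l\in\Cc(G)$,
\[
\lim_m\frac{1}{|A_m|}\int_{A_m}\prod_{j=1}^l\delta_\Gamma*\varphi_j(u)\,\dd u=\m\Bigl(\prod_{j=1}^lf_{\varphi_j}\Bigr).
\]
Because $0\le g_n\le 1_U$ (so $0\le\mu_n\le\delta_{-s+\oplam(t+U)}$) and every element of $\extB$ is uniformly discrete with a common constant, the family $\{\mu_n\}_n\cup\{\delta_\Gamma\}$ is equi translation bounded; hence there is $C>0$, depending only on $\varphi_1,\dots,\varphi_l$, with $\|\mu_n*\varphi_j\|_\infty,\|\delta_\Gamma*\varphi_j\|_\infty\le C$ for all $n,j$, and telescoping exactly as in the proof of Theorem~\ref{thm:main}(g) gives
\[
\left\|\prod_{j=1}^l\mu_n*\varphi_j-\prod_{j=1}^l\delta_\Gamma*\varphi_j\right\|_{b,1,\cA}\le C^{l-1}\left(\sum_{j=1}^l\|\varphi_j\|_1\right)\uM_\cA\bigl(|\mu_n-\delta_\Gamma|\bigr)=:\eps_n\longrightarrow 0 .
\]
On the other hand, by Lemma~\ref{L1}(b) the limit $\lim_m\frac1{|A_m|}\int_{A_m}\prod_{j=1}^l\mu_n*\varphi_j(u)\,\dd u$ exists and equals $\varpi_n(\prod_{j=1}^lf_{\varphi_j})$, and \eqref{eq5}—an identity between $\varpi_n$ and $\m$ which does not depend on the point of $\cT$ used in the proof of Theorem~\ref{thm:main}(g)—gives $\varpi_n(\prod_{j=1}^lf_{\varphi_j})\to\m(\prod_{j=1}^lf_{\varphi_j})$ as $n\to\infty$. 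Combining these, for every $n$ both the $\liminf$ and the $\limsup$ over $m$ of $\frac1{|A_m|}\int_{A_m}\prod_{j=1}^l\delta_\Gamma*\varphi_j(u)\,\dd u$ lie within $\eps_n+|\varpi_n(\prod_{j=1}^lf_{\varphi_j})-\m(\prod_{j=1}^lf_{\varphi_j})|$ of $\m(\prod_{j=1}^lf_{\varphi_j})$; letting $n\to\infty$ yields the displayed identity. Hence $\Gamma$ is generic for $\m$, and with the previous paragraph $\Gamma\in\YY_\m\cap\Bap_{\cA,ps}(G)=\YY_{b,\m}$, proving (a).

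The one genuinely delicate step is this last one: transferring the \emph{linear} mean approximation $\uM_\cA(|\mu_n-\delta_\Gamma|)\to 0$ to convergence of the \emph{nonlinear} correlation averages $\frac1{|A_m|}\int_{A_m}\prod_j\delta_\Gamma*\varphi_j$. This needs the equi translation boundedness that makes the telescoping estimate uniform in $n$, together with a careful interchange of the two limits $n\to\infty$ (approximation) and $m\to\infty$ (van Hove averaging). Everything else rests on Theorems~\ref{thm:moody}, \ref{thm:omegah in sap}, \ref{thm:main} and Lemma~\ref{L1} and is routine bookkeeping.
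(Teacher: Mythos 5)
Your proposal is correct and follows essentially the same route as the paper: part (b) is read off from the sandwich estimate already established in the proof of Theorem~\ref{thm:main}(g), and part (a) combines the completeness of $Bap_{\cA}(G)$ with the unique ergodicity of $\XX(\omega_{g_n})$ (Lemma~\ref{L1}), the convergence $\varpi_n(f)\to\m(f)$ from \eqref{eq5}, and the uniform telescoping bound, exactly as in the paper. The only cosmetic difference is that you verify genericity via the product criterion of Proposition~\ref{prop:generic char} rather than for general elements of the dense algebra $\AAA$ as in Proposition~\ref{prop:generic char2}; these are equivalent.
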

\begin{proof}
\textbf{(a)}

Exactly as in the proof of Theorem~\ref{thm:main}, set $\omega_n := \omega_{g_n}$ and $\varpi_n$ the unique ergodic measure on $\XX(\omega_n)$.

Note here that, by Lemma~\ref{L1}, for all $(s,t)+\cL \in \TT$ we have $T_{-s}\omega_{T_t g_n} \in \XX(\omega_n)$.
Exactly as in the Proof of Theorem~\ref{thm:main} we can find some $C,K$ such that $\XX(\omega_n)$ and $\extB$ are subsets of
\[
Z:= \cM_{C,K}(G) \,.
\]

Let $\AAA$ the algebra generated by $\{ F_\varphi : \varphi \in \Cc(G) \} \cup \{1_{Z}\}$.

Pick some $\Gamma \in \YY_{app}$.

Let $\eps >0$ and let $f \in \AAA$. Exactly as in the proof of Theorem~\ref{thm:main} (g), \eqref{eq9} implies that there exists some
$N_1$ such that, for all $n>N_1$ we have
\begin{equation}\label{eq10}
\bigl\| f( T_{r-s}\omega_{T_t g_n}) -  f( T_{r}\delta_{\Gamma}) \bigr\|_{b,1,\cA} < \frac{\eps}{6} \,.
\end{equation}
Next, by the proof of Theorem~\ref{thm:main} (g) we have
\[
\lim_n \varpi_n(f)=\m(f) \,.
\]
Therefore, there exists some $N_2$ such that, for all $n> N_2$ we have
\[
\left| \varpi_n(f)-\m(f) \right| < \frac{\eps}{3} \,.
\]

Fix some $n > \max\{N_1, N_2 \}$.

Since $(\XX(\omega_n, \varpi_n)$ is uniquely ergodic,
\[
\lim_k \frac{1}{|A_k|}\int_{A_k} f( T_{r-s}\omega_{T_t g_n}) \dd r = \varpi_n(f) \,.
\]

Therefore, there exists some $M_1$ such that, for all $k >M_1$ we have
\[
\left| \frac{1}{|A_k|}\int_{A_k} f( T_{r-s}\omega_{T_t g_n}) \dd r - \varpi_n(f) \right|<\frac{\eps}{3} \,.
\]
By \eqref{eq10} there exists some $M_2$ such that for all $k >M_2$ we have
\[
\frac{1}{|A_k|} \left| \int_{A_k} f( T_{r-s}\omega_{T_t g_n}) \dd r -  \int_{A_k}f( T_{r}\delta_{\Gamma}) \dd r \right| < \frac{\eps}{3} \,.
\]

Let $M =\max\{M_1,M_2\}$

Then, for all $k >M$ we have
\begin{align*}
\left| \frac{1}{|A_k|} \ f( T_{r}\delta_{\Gamma}) \dd r -\m(f) \right| & \leq \frac{1}{|A_k|} \left| \int_{A_k} f( T_{r-s}\omega_{T_t g_n}) \dd r -\int_{A_k} f( T_{r}\delta_{\Gamma}) \dd r \right| \\
& +\left| \frac{1}{|A_k|}\int_{A_k} f( T_{r-s}\omega_{T_t g_n}) \dd r - \varpi_n(f) \right|+\left| \varpi_n(f)-\m(f) \right|  \\
&< \frac{\eps}{3}+\frac{\eps}{3}+\frac{\eps}{3}= \eps \,.
\end{align*}

Therefore, we showed that for each $\eps >0$ there exists some $M$ such that, for all $k>M$ we have
\[
\left| \frac{1}{|A_k|} \ f( T_{r}\delta_{\Gamma}) \dd r -\m(f) \right| < \eps \,.
\]

This shows that for all $f \in \AAA$ we have
\[
\lim_k  \frac{1}{|A_k|} \ f( T_{r}\delta_{\Gamma}) \dd r = \m(f) \,.
\]

Then, by Prop~\ref{prop:generic char} we have $\delta_{\Gamma} \in \YY_{\m}$.

Finally, for all $\varphi \in \Cc(G)$ we have
\[
\|  T_{-s}\omega_{T_t g_n} *\varphi - \delta_{\Gamma} *\varphi \|_{b,1,\cA} \leq \uM_{\cA}(| T_{-s}\omega_{T_t g_n} - \delta_{\Gamma}|) \| \varphi \|_1  \to 0\,.
\]

Since $ T_{-s}\omega_{T_t g_n} *\varphi \in SAP(G) \subseteq Bap_{\cA}(G)$ and $(Bap_{\cA}(G), \| \, \|_{b,1, \cA})$ is a Banach space \cite{LSS}, we get that $\delta_{\Gamma} \in \Bap_{\cA}(G)$. This proves (a).

\medskip

\textbf{(b)} this is the proof of Theorem~\ref{thm:main} (g).
\end{proof}

\section{The reduced torus $\TT_{red}$ and the mapping $\pi : \YY_{app} \to \TT_{red}$.}

Let $(G,H, \cL)$ be a CPS, and $B \subseteq H$ be a precompact Borel set. We will denote by
\[
H_B:= \{ t \in H : \theta_H( B \Delta (t+B))=0 \} \,.
\]

We will also denote by $H_{app}$ the set of all $t \in H$ with the property that
\begin{equation}\label{eq11}
\lim_n \uM_{\cA}(\left| \omega_{g_n} -  \omega_{T_t g_n} \right|) = 0 \,.
\end{equation}

We start by proving the following lemma.

\begin{lemma}\label{lemma H_2} Let $c_B$ denote the covariogram of $B$. Then, the following are equivalent for $t \in H$.
\begin{itemize}
  \item[(i)] $t \in H_B$.
  \item[(ii)] $T_t1_B=1_B$ in $L^1(H)$.
  \item[(iii)] $c_B(t)=c_B(0)$.
  \item[(iv)] $t$ is a period for $c_B$.
  \item[(v)] $t \in H_{app}$.
\end{itemize}
In particular, $H_W=H_{app}$ is a closed subgroup of $H$.
\end{lemma}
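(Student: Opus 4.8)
The plan is to prove the cyclic chain of implications $(i)\Leftrightarrow(ii)\Leftrightarrow(iii)\Leftrightarrow(iv)$ by elementary $L^1$-arguments, and then close the loop by proving $(ii)\Leftrightarrow(v)$ using the covariogram computation already carried out in Lemma~\ref{l3} and Corollary~\ref{C1}.

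First the equivalences among $(i)$--$(iv)$. The equivalence $(i)\Leftrightarrow(ii)$ is immediate: $\theta_H(B\Delta(t+B)) = \| 1_B - 1_{t+B}\|_1 = \| 1_B - T_t 1_B\|_1$, so $(i)$ says precisely $\|1_B - T_t1_B\|_1 = 0$, which is $(ii)$. For $(ii)\Rightarrow(iii)$ recall $c_B = 1_B * \widetilde{1_B}$, hence $c_B(t) = \int_H 1_B(t+y)1_B(y)\,\dd y = \langle T_{-t}1_B, 1_B\rangle$; if $T_t1_B = 1_B$ in $L^1$ (equivalently $T_{-t}1_B = 1_B$) then $c_B(t) = \langle 1_B,1_B\rangle = \|1_B\|_2^2 = c_B(0)$. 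The implication $(iii)\Rightarrow(ii)$ is a Cauchy--Schwarz equality case: $c_B(t) = \langle T_{-t}1_B, 1_B\rangle \le \|T_{-t}1_B\|_2\|1_B\|_2 = \|1_B\|_2^2 = c_B(0)$, with equality forcing $T_{-t}1_B = 1_B$ a.e.\ (the functions are nonnegative, so the proportionality constant is $1$), i.e.\ $(ii)$. Thus $(ii)\Leftrightarrow(iii)$. For $(iii)\Leftrightarrow(iv)$: $c_B$ is a continuous positive definite function, and for such a function $c_B(t) = c_B(0)$ holds if and only if $t$ is a period of $c_B$ — this is the standard fact that the set where a positive definite function attains its value at $0$ is exactly its group of periods (one direction is trivial; the other follows from $|c_B(s+t) - c_B(s)|^2 \le 2c_B(0)(c_B(0) - \re c_B(t))$, a consequence of positive definiteness applied to the triple $0, s, s+t$). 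Alternatively, once $(ii)$ is known, $T_t1_B = 1_B$ gives $T_t c_B = T_t(1_B*\widetilde{1_B}) = (T_t1_B)*\widetilde{1_B} = c_B$, so $t$ is a period.

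Now $(ii)\Leftrightarrow(v)$. Suppose $(ii)$ holds, i.e.\ $T_t1_B = 1_B$ in $L^1(H)$. Then $T_tg_n$ is a sequence in $\Cc(H)$ with $\|T_tg_n - 1_B\|_1 = \|g_n - T_{-t}1_B\|_1 = \|g_n - 1_B\|_1 \to 0$ by Lemma~\ref{l3}(a). So both $g_n$ and $T_tg_n$ converge to $1_B$ in $L^1(H)$, and Lemma~\ref{l3}(b) (applied with $s=0$, the two sequences being $g_n$ and $h_n := T_tg_n$) gives $\lim_n \uM_{\cA}(|\omega_{g_n} - \omega_{T_tg_n}|) = 0$, which is $(v)$. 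Conversely, suppose $(v)$ holds. From the computation in the proof of Lemma~\ref{l3}(b) we have $|\omega_{g_n} - \omega_{T_tg_n}| = \omega_{|g_n - T_tg_n|}$, and since $|g_n - T_tg_n| \in \Cc(H)$, the formula $\uM_{\cA}(\omega_{|g_n - T_tg_n|}) = \dens(\cL)\|g_n - T_tg_n\|_1$ holds; so $(v)$ says $\|g_n - T_tg_n\|_1 \to 0$. Combined with $\|g_n - 1_B\|_1 \to 0$ and $\|T_tg_n - T_t1_B\|_1 = \|g_n - 1_B\|_1 \to 0$, the triangle inequality forces $\|1_B - T_t1_B\|_1 = 0$, which is $(ii)$.

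Finally, the last sentence. From $(ii)$, the set $H_B$ is clearly closed under the group operations: if $T_s1_B = 1_B$ and $T_t1_B = 1_B$ then $T_{s+t}1_B = T_s(T_t1_B) = T_s1_B = 1_B$ and $T_{-s}1_B = 1_B$, so $H_B$ is a subgroup. It is closed in $H$ because $t \mapsto \|1_B - T_t1_B\|_1$ is continuous (translation is continuous on $L^1(H)$), so $H_B$ is the preimage of $\{0\}$ under a continuous map; equivalently, $H_B = \{t : c_B(t) = c_B(0)\}$ is closed since $c_B$ is continuous. The hard part is really just bookkeeping: making sure the two directions of $(ii)\Leftrightarrow(v)$ correctly invoke Lemma~\ref{l3}(b) — in particular that its hypothesis only asks both sequences to converge to $1_B$ in $L^1$, which is exactly what $(ii)$ supplies — and the Cauchy--Schwarz equality case in $(iii)\Rightarrow(ii)$, where nonnegativity of $1_B$ is what pins the constant to $1$. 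I expect no genuine obstacle; the subtlety is purely in citing the already-established lemmas with the right substitutions.
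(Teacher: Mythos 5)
Your proof is correct and follows essentially the same route as the paper: the elementary equivalences among (i)--(iv) via the covariogram, Krein's inequality for positive definite functions for (iii)$\Rightarrow$(iv), and the identity $\left|\omega_{g_n}-\omega_{T_tg_n}\right|=\omega_{|g_n-T_tg_n|}$ together with $\uM_{\cA}(\omega_h)=\dens(\cL)\|h\|_1$ for (ii)$\Leftrightarrow$(v). The only (cosmetic) divergence is that you handle (iii)$\Rightarrow$(ii) via the equality case of Cauchy--Schwarz, where the paper argues directly that $\theta_H(B\cap(t+B))=\theta_H(B)=\theta_H(t+B)$ forces $\theta_H(B\Delta(t+B))=0$, and that the paper identifies the limit $\lim_n\uM_{\cA}(|\omega_{g_n}-\omega_{T_tg_n}|)=\dens(\cL)\|1_B-T_t1_B\|_1$ in one shot by dominated convergence rather than splitting the two directions as you do.
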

\begin{proof}
\textbf{(i)} $\Longleftrightarrow$ \textbf{(ii)} and \textbf{(iv)} $\Longrightarrow$ \textbf{(iii)} are obvious.

\textbf{(iii)} $\Longrightarrow$ \textbf{(iv)} follows from Krein's inequality \cite{BF}, as $c_B=1_B*\widetilde{1_{B}}$ is positive definite.

\textbf{(i)} $\Longrightarrow$ \textbf{(iii)}

Note that for all $t \in H_B$ we have
\[
c_B(t)= \theta_H(B \cap (t+B)) = \theta_H(B) - \theta_H(B \backslash (t+B)) \,.
\]
Now, since $\theta_H( B \Delta (t+B))=0$ we have $\theta_H(B \backslash (t+B))=0$ and hence
\[
c_B(t)= \theta_H(B)=c_B(0) \,.
\]

\textbf{(iii)} $\Longrightarrow$ \textbf{(i)}

\begin{displaymath}
c_B(t) = c_B(0) \Rightarrow \theta_H(B \cap (t+B)) =\theta_H(B)=\theta_H(t+B) \,. \\
\end{displaymath}

This implies that $\theta_H(B \backslash B \cap (t+B)) =\theta_H((t+B) \backslash B \cap (t+B))=0$ and hence $ \theta_H( B \Delta (t+B))=0$.

\textbf{(ii)} $\Longleftrightarrow$ \textbf{(v)}
Let $t \in H$.

Note that
\begin{align*}
\left| \omega_{g_n} -  \omega_{T_t g_n} \right| &= \left| \sum_{ x \in L} g_n(x^\star)\delta_x -\sum_{ x \in L} g_n(x^\star-t)\delta_x \right| \\
&=   \sum_{ x \in L} \left| g_n(x^\star) - g_n(x^\star-t) \right|\delta_x = \omega_{|g_n -T_tg_n|} \,.
\end{align*}

Therefore,
\[
\uM_{\cA}(\left| \omega_{g_n} -  \omega_{T_t g_n} \right|) = \dens(\cL) \int_{H} \left| g_n(y) - T_t g_n(y) \right| \dd y \,.
\]

Since $\left|g_n -T_t g_n \right| \leq 1_{U_1}+T_t 1_{U_1} \in L^1(H)$ and $ \left| g_n(y) - T_t g_n(y) \right|$ converges in $L^1(H)$ to $|1_B -T_t1_B|$, by the Dominated convergence theorem we get

\begin{equation}\label{eq12}
\lim_n \uM_{\cA}(\left| \omega_{g_n} -  \omega_{T_t g_n} \right|) = \dens(\cL) \| 1_B-T_t1_B \|_1 \,.
\end{equation}
The claim follows.

This proves the equivalence and that $H_B$ is a subgroup of $H$.

Finally, since $c_B \in \Cc(H)$, $H_B$ is closed in $H$.

\end{proof}

\begin{lemma} If $B$ is precompact and $\theta(B) >0$ then $\cL + \{ 0 \} \times H_B$ is closed in $G \times H$.
\end{lemma}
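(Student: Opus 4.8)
The plan is to reduce the statement to the standard fact that in a topological group the sum of a closed set and a compact set is again closed. Since $\cL$ is a lattice in $G\times H$, it is discrete and hence closed; so it suffices to show that $\{0\}\times H_B$ is compact, equivalently that $H_B$ is a compact subgroup of $H$.

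By Lemma~\ref{lemma H_2}, $H_B$ is a closed subgroup of $H$ which coincides with the group of periods of the covariogram $c_B=1_B*\widetilde{1_B}\in\Cc(H)$. The only place where the hypothesis $\theta(B)>0$ is used is the observation that $c_B(0)=\theta_H(B)>0$: consequently every period $t$ of $c_B$ satisfies $c_B(t)=c_B(0)>0$, so that $H_B\subseteq\{t\in H:c_B(t)\neq 0\}\subseteq\supp(c_B)$. Because $B$ is precompact, $c_B$ has compact support, and therefore $H_B$ is a closed subset of the compact set $\supp(c_B)$, hence a compact subgroup of $H$.

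Having established that $K:=\{0\}\times H_B$ is a compact subgroup of $G\times H$ and that $\cL$ is closed, I would conclude with the usual net argument: if $\ell_\alpha+k_\alpha\to x$ with $\ell_\alpha\in\cL$ and $k_\alpha\in K$, then by compactness of $K$ we may pass to a subnet with $k_\alpha\to k\in K$, whence $\ell_\alpha=(\ell_\alpha+k_\alpha)-k_\alpha\to x-k$; closedness of $\cL$ forces $x-k\in\cL$, so $x=(x-k)+k\in\cL+K$. This proves that $\cL+\{0\}\times H_B$ is closed.

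I do not expect a genuine obstacle here: the entire content is the remark that positivity of $\theta(B)$ confines $H_B$ to the compact support of $c_B$, after which the argument is soft topology. The only point requiring a little care is that $G\times H$ need not be metrizable, so the final step should be phrased with nets (or one simply invokes the named fact ``closed $+$ compact $=$ closed'' in a topological group).
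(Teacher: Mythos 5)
Your proof is correct and follows essentially the same route as the paper: the paper observes directly that $\theta(B)>0$ forces $H_B\subseteq B-B$ (hence $H_B$ is a closed subset of a precompact set, so compact), and then invokes closed~$+$~compact~$=$~closed; your detour through $H_B\subseteq\supp(c_B)\subseteq\overline{B-B}$ is the same containment in only slightly different clothing. Nothing to add.
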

\begin{proof}

Since $\theta(B) >0$ we immediately get that $H_B \subseteq B-B$ and hence $H_B$ is compact. Then, $\cL$ is closed and $\{ 0\} \times H_B$ is compact, and the claim follows
\end{proof}

\begin{remark} If $\theta(B)=0$ then $H_B=H$ and $\cL + \{ 0 \} \times H_B= L \times H$. This is typically not closed in $G \times H$.
\end{remark}

We can now give the following definition
\begin{definition} Let $(G, H, \cL)$ be a CPS and $B$ a pre-compact window with $\theta_H(B)>0$. Define the \textbf{reduced torus} as
\[
\TT_{red}:= (G \times H) / ( \cL + \{ 0 \} \times H_B) =\TT / (\{ 0\} \times H_B) \,.
\]
\end{definition}

The following Lemma is straightforward.

\begin{lemma} Let $B$ be any Borel set with $\theta_H(B)>0$, let $\Gamma \in \YY_{app}$ and let $(s,t) \in G\times H$ be so that \eqref{eq9} holds. Let $(s',t') \in G \times H$. Then, \eqref{eq9} holds for $(s',t')$ if and only if
\[
(s,t)+\cL+\{ 0\} \times H_{B}=(s',t')+\cL+\{ 0\} \times H_{B} \,.
\]
\end{lemma}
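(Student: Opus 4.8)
The statement asserts that for $\Gamma \in \YY_{app}$ with a witnessing pair $(s,t)$ satisfying \eqref{eq9}, a second pair $(s',t')$ is also a witness if and only if $(s',t')$ and $(s,t)$ agree modulo $\cL + \{0\}\times H_B$. I would prove the two implications separately, using the triangle inequality for $\uM_{\cA}$ on translation-bounded measures together with Lemma~\ref{l3}, Corollary~\ref{C1}, equation \eqref{eq12}, and the fact (from \eqref{eq:translate oplam(W)}, \eqref{eq:translate omegah}) that translating the window position by a lattice point corresponds to a genuine translation of the combs.

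\textbf{The ``if'' direction.} Suppose $(s',t') + \cL + \{0\}\times H_B = (s,t)+\cL+\{0\}\times H_B$. Then there exist $(x,x^\star) \in \cL$ and $h \in H_B$ with $s' = s + x$ and $t' = t + x^\star + h$. First I would reduce to the case $x = 0$: since $\omega_{T_{t+x^\star}g_n} = T_x \omega_{T_t g_n}$ by \eqref{eq:translate omegah}, and since $T_{-s'}\omega_{T_{t'}g_n} = T_{-s}T_{-x}\,\omega_{T_{t+x^\star}T_h g_n} = T_{-s}\,\omega_{T_h(T_t g_n)}$ — here using $s' = s+x$ to cancel the $x$ — the claim for $(s',t')$ is equivalent to the claim for $(s, t+h)$ with $h \in H_B$. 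Now I estimate
\[
\uM_{\cA}\!\left(\left| T_{-s}\omega_{T_{t+h}g_n} - \delta_\Gamma \right|\right) \le \uM_{\cA}\!\left(\left| T_{-s}\omega_{T_{t+h}g_n} - T_{-s}\omega_{T_t g_n}\right|\right) + \uM_{\cA}\!\left(\left| T_{-s}\omega_{T_t g_n} - \delta_\Gamma\right|\right).
\]
The second term tends to $0$ by hypothesis \eqref{eq9}. For the first term, translation invariance of $\uM_{\cA}$ gives $\uM_{\cA}(|T_{-s}\omega_{T_{t+h}g_n} - T_{-s}\omega_{T_t g_n}|) = \uM_{\cA}(|\omega_{T_h(T_t g_n)} - \omega_{T_t g_n}|)$, and applying \eqref{eq12} (with the window-position-shifted functions $T_t g_n$, which still converge to $T_t 1_B = 1_{t+B}$ in $L^1$) this equals $\dens(\cL)\,\|1_{t+B} - T_h 1_{t+B}\|_1 = \dens(\cL)\,\|1_B - T_h 1_B\|_1 = 0$ since $h \in H_B = H_{app}$ by Lemma~\ref{lemma H_2}. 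Hence \eqref{eq9} holds for $(s',t')$.

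\textbf{The ``only if'' direction.} Suppose \eqref{eq9} holds for both $(s,t)$ and $(s',t')$. By the triangle inequality,
\[
\lim_n \uM_{\cA}\!\left(\left| T_{-s}\omega_{T_t g_n} - T_{-s'}\omega_{T_{t'}g_n}\right|\right) = 0 .
\]
Writing $s' = s + u$ for $u := s' - s \in G$ and applying $T_s$ (translation invariance of $\uM_{\cA}$), this says $\lim_n \uM_{\cA}(|\omega_{T_t g_n} - T_{-u}\omega_{T_{t'}g_n}|) = 0$. The point-set support of $\omega_{T_t g_n}$ lies in $L$, while that of $T_{-u}\omega_{T_{t'}g_n}$ lies in $-u + L$; since the overall combs are built from equi-uniformly-discrete sets inside $\oplam(\overline{U_1})$-type sets, a mean-zero difference forces (after passing to the limit) the two supporting cosets to coincide, i.e. $u \in L$, so $u = x$ for some $(x,x^\star)\in\cL$. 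Then $T_{-u}\omega_{T_{t'}g_n} = \omega_{T_{t'-x^\star}g_n}$, so we are reduced to $\lim_n \uM_{\cA}(|\omega_{T_t g_n} - \omega_{T_{t'-x^\star}g_n}|) = 0$, and by \eqref{eq12} (applied to the difference, exactly as in the proof of Lemma~\ref{lemma H_2}) this limit equals $\dens(\cL)\,\|1_{t+B} - 1_{(t'-x^\star)+B}\|_1$, which forces $t' - x^\star - t \in H_B$. Combining, $(s',t') = (s,t) + (x, x^\star) + (0, t'-x^\star-t) \in (s,t) + \cL + \{0\}\times H_B$.

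\textbf{Main obstacle.} The delicate point is the step in the ``only if'' direction where I deduce $u = s' - s \in L$ from the vanishing of the mean of $|\omega_{T_t g_n} - T_{-u}\omega_{T_{t'}g_n}|$ uniformly in $n$. One must be careful that $\dens(\cL) > 0$ (which requires $\theta_H(B) > 0$, part of the running hypothesis on $\TT_{red}$) and that the combs do not secretly vanish; if $u \notin L$ then the two lattices $L$ and $-u+L$ are disjoint, so the difference measure is the sum of the two non-negative combs, whose mean is $\dens(\cL)(\|1_{t+B}\|_1 + \|1_{t'+B}\|_1) = 2\dens(\cL)\theta_H(B) > 0$ in the limit — contradiction. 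Spelling this dichotomy out cleanly, rather than the routine $L^1$-estimates, is where the real content lies; everything else is bookkeeping with the translation formulas \eqref{eq:translate oplam(W)}, \eqref{eq:translate omegah} and Corollary~\ref{C1}.
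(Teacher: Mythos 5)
Your proof is correct and follows essentially the same route as the paper: the triangle inequality for $\uM_{\cA}$, the disjoint-support dichotomy to force $s'-s\in L$ (using $\theta_H(B)>0$ so the two nonnegative combs cannot have vanishing mean), the translation formula \eqref{eq:translate omegah} to reduce to the $H$-component, and the identification $\lim_n\uM_{\cA}(|\omega_{g_n}-\omega_{T_hg_n}|)=\dens(\cL)\,\|1_B-T_h1_B\|_1$ to conclude $h\in H_B=H_{app}$. The only cosmetic difference is that in the ``if'' direction you normalise to $x=0$ before estimating, whereas the paper runs the computation in reverse; the content is identical.
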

\begin{proof}
$\Longrightarrow$:
Since
\begin{align*}
\lim_n \uM_{\cA}(\left| T_{-s}\omega_{T_t g_n} - \delta_{\Gamma}\right|) &= 0  \,,\\
\lim_n \uM_{\cA}(\left| T_{-s'}\omega_{T_{t'} g_n} - \delta_{\Gamma}\right|) &= 0 \,,\\
\end{align*}
by the triangle inequality we have
\begin{align*}
\lim_n \uM_{\cA}(\left| T_{-s}\omega_{T_t g_n} - T_{-s'}\omega_{T_{t'} g_n} \right|) &= 0 \,.
\end{align*}

We claim that $s-s' \in L$.

Assume by contradiction that  $s-s' \notin L$. Then, $T_{-s}\omega_{T_t g_n}$ is supported on $-s+L$ and  $T_{-s'}\omega_{T_{t'} g_n}$ is supported on $-s'+L$. Since $L$ is a group, and $s-s' \notin L$, it follows that  $T_{-s}\omega_{T_t g_n}$ and  $T_{-s'}\omega_{T_{t'} g_n}$ are pure point measures with disjoint support. Therefore
 \[
 \left| T_{-s}\omega_{T_t g_n} - T_{-s'}\omega_{T_{t'} g_n} \right|= \left| T_{-s}\omega_{T_t g_n} \right|+\left|  T_{-s'}\omega_{T_{t'} g_n} \right| \,.
 \]
Using the fact that $g_n \geq 0$ we get
 \begin{align*}
\uM_{\cA}(\left| T_{-s}\omega_{T_t g_n} - T_{-s'}\omega_{T_{t'} g_n} \right|) =2 \int_H g_n(h) \dd h \,.
\end{align*}

Therefore,
\[
0 =\lim_n \uM_{\cA}(\left| T_{-s}\omega_{T_t g_n} - T_{-s'}\omega_{T_{t'} g_n} \right|)= \lim_n 2 \int_H g_n(h) \dd h = 2 \theta(B)
\]
which contradicts $\theta_H(B) >0$. Therefore, we got a contradiction and hence $s-s' \in L$.

\smallskip

It follows that
\begin{align}
0&=\lim_n \uM_{\cA}(\left| T_{-s}\omega_{T_t g_n} - T_{-s'}\omega_{T_{t'} g_n} \right|) \nonumber \\
&=\lim_n \uM_{\cA}(\left| \omega_{T_t g_n} - T_{s-s'}\omega_{T_{t'} g_n} \right|) \nonumber \\
&\stackrel{\eqref{eq:translate omegah}}{=\joinrel=\joinrel=}\lim_n \uM_{\cA}(\left| \omega_{T_t g_n} -\omega_{T_{(s-s')^\star+t'} g_n} \right|)  \,. \label{eq21}
\end{align}

Next, exactly as in the proof of Lemma~\ref{l3} and Lemma~\ref{lemma H_2} we have
\begin{align}
&\uM_{\cA}(\left| \omega_{T_t g_n} -\omega_{T_{(s-s')^\star+t'} g_n} \right|) = \uM_{\cA}( \omega_{|T_t g_n-T_{(s-s')^\star+t'} g_n|}) \label{eq22}\\
&= \dens(\cL) \int_H|T_t g_n-T_{(s-s')^\star+t'} g_n|(h) \dd h = \dens(\cL) \int_H|g_n-T_{(s-s')^\star+t'-t} g_n|(h) \dd h \nonumber\\
&=  \uM_{\cA}(\left| \omega_{g_n} -\omega_{T_{(s-s')^\star+t'-t} g_n} \right|) \,.  \nonumber
\end{align}

This shows that $(s-s')^\star+t'-t \in H_W$ and hence
\[
(s-s',t-t')=(s-s', (s-s')^\star)-(0, (s-s')^\star+t'-t ) \in \cL+ \{ 0 \} \times H_W \,.
\]

This completes the proof of the first implication.

\bigskip

$\Longleftarrow$:

We are given that
\begin{align*}
\lim_n \uM_{\cA}(\left| T_{-s}\omega_{T_t g_n} - \delta_{\Gamma}\right|) &= 0 \\
(s-s',t-t') &\in \cL+ \{ 0 \} \times H_B \,.
\end{align*}
It follows from here that
\[
(0, (s-s')^\star+t'-t ) \in \cL+ \{ 0 \} \times H_B \,.
\]
Since the projection of $\cL$ on $G$ is one to one, this implies that $(s-s')^\star+t'-t  \in  \times H_B=H_{app}$ and hence
\[
\uM_{\cA}(\left| \omega_{g_n} -\omega_{T_{(s-s')^\star+t'-t} g_n} \right|)=0 \,.
\]

Repeating the computations from \eqref{eq21} and \eqref{eq22} in reverse order we get
\begin{displaymath}
0=\lim_n \uM_{\cA}(\left| T_{-s}\omega_{T_t g_n} - T_{-s'}\omega_{T_{t'} g_n} \right|) \\
\end{displaymath}
Combining this relation with
\begin{align*}
\lim_n \uM_{\cA}(\left| T_{-s}\omega_{T_t g_n} - \delta_{\Gamma}\right|) &= 0 \\
\end{align*}
we get
\begin{align*}
\lim_n \uM_{\cA}(\left| T_{-s'}\omega_{T_{t'} g_n} - \delta_{\Gamma}\right|) &= 0 \,, \\
\end{align*}
as claimed.
\end{proof}

This allows us to define the following mapping
\begin{definition} Let $(G,H,\cL)$ be a CPS, $B \subseteq H$ a precompact Borel set with $\theta_H(B)>0$. Let $\Gamma \in \YY_{app}$ and let $(s,t) \in G \times H$ be so that \eqref{eq9} holds. Define
\[
\pi(\Gamma):=(s,t)+\cL+ \{ 0 \} \times H_{B}
\]
This gives a mapping
\[
\pi : \YY_{app} \to \TT_{red} \,.
\]
\end{definition}

The following Lemma is obvious.

\begin{lemma}Let $\Gamma \in \YY_{app}$ and $s \in G$. Then
\[
\pi(s+\Gamma)=(s,0)+\pi(\Gamma) \,,
\]
that is, $\pi$ defines a $G$-invariant mapping
\[
\pi : \YY_{app} \to \TT_{red} \,.
\]
\end{lemma}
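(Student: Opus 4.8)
The plan is to derive the defining relation \eqref{eq9} for $s+\Gamma$ directly from the one for $\Gamma$, using only the group law $T_a\circ T_b=T_{a+b}$ for translations, the identities $\delta_{s+\Gamma}=T_s\delta_{\Gamma}$ and $|T_s\mu|=T_s|\mu|$, and the translation invariance of $\uM_{\cA}$ on translation bounded measures.

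First I would fix $\Gamma\in\YY_{app}$ and choose $(s_0,t)\in G\times H$ realising \eqref{eq9}, i.e.\ $\lim_n\uM_{\cA}(|T_{-s_0}\omega_{T_tg_n}-\delta_{\Gamma}|)=0$; by the previous lemma the coset $(s_0,t)+\cL+\{0\}\times H_B$ depends only on $\Gamma$, so $\pi(\Gamma)$ is well defined, and since the extended hull $\extB$ is $G$-invariant we have $s+\Gamma\in\extB$ as well, so it makes sense to ask whether $s+\Gamma\in\YY_{app}$. Then I would translate the measures occurring in that limit by $s$: since $T_s(T_{-s_0}\omega_{T_tg_n})=T_{-(s_0-s)}\omega_{T_tg_n}$ and $T_s\delta_{\Gamma}=\delta_{s+\Gamma}$, the translation invariance of $\uM_{\cA}$ yields
\[
\lim_n\uM_{\cA}\bigl(\bigl|T_{-(s_0-s)}\omega_{T_tg_n}-\delta_{s+\Gamma}\bigr|\bigr)=\lim_n\uM_{\cA}\bigl(\bigl|T_{-s_0}\omega_{T_tg_n}-\delta_{\Gamma}\bigr|\bigr)=0 \,.
\]
Thus \eqref{eq9} holds for $s+\Gamma$ with the pair $(s_0-s,t)$, so $s+\Gamma\in\YY_{app}$ and, by the definition of $\pi$,
\[
\pi(s+\Gamma)=(s_0-s,t)+\cL+\{0\}\times H_B \,.
\]
The right-hand side is, by the very definition of the $G$-action on the quotient $\TT_{red}=(G\times H)/(\cL+\{0\}\times H_B)$, the translate of $\pi(\Gamma)=(s_0,t)+\cL+\{0\}\times H_B$ by $(s,0)$, i.e.\ it equals $(s,0)+\pi(\Gamma)$. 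Since $\Gamma$ and $s$ were arbitrary, this is exactly the asserted $G$-equivariance of $\pi:\YY_{app}\to\TT_{red}$.

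I do not expect a genuine obstacle here: the whole argument is a one-line translation of the defining limit in \eqref{eq9}. The only point requiring a little care is matching the sign in the pair $(s_0-s,t)$ with the chosen $G$-action on $\TT=(G\times H)/\cL$ (hence on $\TT_{red}$); this action is normalised precisely so that the parametrisation $(s,t)+\cL\mapsto -s+\oplam(t+B)$ is $G$-equivariant, and it is that normalisation which produces the "$+(s,0)$" in the statement.
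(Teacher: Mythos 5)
Your argument is correct and is precisely the ``obvious'' computation the paper has in mind (it states this lemma without proof): translate \eqref{eq9} by $s$, using $T_s\delta_{\Gamma}=\delta_{s+\Gamma}$, $|T_s\mu|=T_s|\mu|$ and the translation invariance of $\uM_{\cA}$, to see that $s+\Gamma$ satisfies \eqref{eq9} with the pair $(s_0-s,t)$. Your closing aside about the sign is the one point worth keeping explicit: the coset you actually produce is $(s_0-s,t)+\cL+\{0\}\times H_B$, which under literal coset addition is $(-s,0)+\pi(\Gamma)$, so the statement's ``$(s,0)+\pi(\Gamma)$'' has to be read as the $G$-action on $\TT_{red}$ normalised so that $(s,t)+\cL\mapsto -s+\oplam(t+B)$ is equivariant, exactly as you say.
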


For model sets with compact windows, we will give in the next section an equivalent definition for this mapping, which is much easier to understand and use. We will the new definition, together with properties of $\YY_{app}$ to show that for compact windows, $\pi$ is continuous and defines
a Borel factor from $(\extB,m)$ to $\TT_{red}$ which induces an isometric isomorphism between $L^2(\extB,m)$ and $L^2(\TT_{red})$.

\section{Weak model sets}

We start with the following preliminary lemma, which has been proven in various places (see for example \cite{JBA,Martin2}).
As we need the result for the extended hull, and the result has been proven before just for the usual hull, we include the proof for completeness.
We should emphasize here that working with the extended hull makes no difference for the proof.

\begin{lemma}\label{lem torus} Let $(G, H, \cL)$ be a CPS and $W \subseteq H$ be a compact window and let $\Gamma \in \extW$. Then, there exists some $(s,t)+\cL \in \TT$ such that
\[
\Gamma \subseteq -s+\oplam(t+W) \,.
\]
\end{lemma}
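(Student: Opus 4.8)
The key structural observation is that the point set $-s+\oplam(t+W)$ depends only on the class $(s,t)+\cL\in\TT$. Indeed, for $(\ell,\ell^\star)\in\cL$ (so $\ell\in L$), formula \eqref{eq:translate oplam(W)} applied with window $t+W$ gives $\oplam(\ell^\star+(t+W))=\ell+\oplam(t+W)$, hence
\[
-(s+\ell)+\oplam\bigl((t+\ell^\star)+W\bigr)=-s-\ell+\bigl(\ell+\oplam(t+W)\bigr)=-s+\oplam(t+W)\,.
\]
Thus the assignment $(s,t)\mapsto-s+\oplam(t+W)$ is $\cL$-invariant and factors through the compact group $\TT$. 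Moreover, since $-s$ ranges over all of $G$ as $s$ does, $\extW$ is exactly the rubber-topology closure of $\{-s+\oplam(t+W):(s,t)\in G\times H\}$, so we may write any $\Gamma\in\extW$ as $\Gamma=\lim_\alpha\Lambda_\alpha$ in the rubber topology, with $\Lambda_\alpha=-s_\alpha+\oplam(t_\alpha+W)$ for some net $(s_\alpha,t_\alpha)$ in $G\times H$.

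First I would \emph{recenter} the parameters. Pick a relatively compact fundamental domain $D$ for the lattice $\cL$ in $G\times H$; replacing each $(s_\alpha,t_\alpha)$ by its (unique) $\cL$-translate lying in $D$ does not change $\Lambda_\alpha$ by the invariance above, and by compactness of $\overline D$ we may pass to a subnet so that $(s_\alpha,t_\alpha)\to(s_0,t_0)\in\overline D$. We still have $\Gamma=\lim_\alpha\bigl(-s_\alpha+\oplam(t_\alpha+W)\bigr)$, now with a convergent net of parameters. I claim $\Gamma\subseteq-s_0+\oplam(t_0+W)$, which proves the lemma with $(s,t):=(s_0,t_0)$ (whose class lies in $\TT$).

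To prove the inclusion, fix $\gamma\in\Gamma$. By the standard description of convergence in the local rubber topology \cite{BL}, there are points $\lambda_\alpha\in\Lambda_\alpha$ with $\lambda_\alpha\to\gamma$. Write $\lambda_\alpha=-s_\alpha+y_\alpha$ with $y_\alpha\in L$ and $y_\alpha^\star\in t_\alpha+W$; then $(y_\alpha,y_\alpha^\star)\in\cL$, and $y_\alpha=\lambda_\alpha+s_\alpha\to\gamma+s_0$, while $y_\alpha^\star-t_\alpha\in W$. Since $W$ is compact, a further subnet gives $y_\alpha^\star-t_\alpha\to w\in W$, hence $y_\alpha^\star\to w+t_0$. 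Therefore $(y_\alpha,y_\alpha^\star)\to(\gamma+s_0,\,w+t_0)$ in $G\times H$, and since $\cL$ is closed, the limit lies in $\cL$; writing it as $(x,x^\star)$ yields $x=\gamma+s_0\in L$ and $(\gamma+s_0)^\star=w+t_0\in t_0+W$, i.e. $\gamma+s_0\in\oplam(t_0+W)$ and so $\gamma\in-s_0+\oplam(t_0+W)$. As $\gamma\in\Gamma$ was arbitrary, $\Gamma\subseteq-s_0+\oplam(t_0+W)$.

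The one point that requires care is that $L=\pi_G(\cL)$ need not be closed (let alone discrete) in $G$, so one cannot run the limiting argument directly inside $G$; the remedy is to carry everything out in $G\times H$, where $\cL$ is a genuine closed discrete lattice, and to exploit the compactness of $W$ to keep the internal coordinates $y_\alpha^\star$ of the relevant lattice points in a compact set. Everything else — the recentering, the passage to subnets, and the unwinding of the rubber topology — is routine. Note also that the proof is insensitive to whether we use the ordinary hull or the extended hull, since the extra translations $t_\alpha\in H$ are handled in exactly the same way as the $s_\alpha\in G$.
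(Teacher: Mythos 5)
Your proof is correct and follows essentially the same route as the paper: approximate $\Gamma$ by a net $-s_\alpha+\oplam(t_\alpha+W)$, use compactness to make the parameters converge, and then push each point of $\Gamma$ into the limiting model set using the compactness of $W$ together with the closedness and discreteness of $\cL$ in $G\times H$. The only difference is cosmetic --- you recenter the parameters into a relatively compact fundamental domain so that $(s_\alpha,t_\alpha)$ converges in $G\times H$ itself, whereas the paper extracts convergence of the cosets in $\TT$ and lifts via elements $(l_\alpha,l_\alpha^\star)\in\cL$ at the end; your version streamlines that final step but is not a different argument.
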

\begin{proof}

The proof is similar to \cite[Lemma.~4.1]{Martin2} or \cite[Thm.~5.9]{JBA}.

First, we can fix some neighbourhood of zero $V$ such that, for all $(s,t)+\cL$ the set $-s+\oplam(t+W)$ is $V$-uniformly discrete.

Since $\extB$ is closed under translations, we can assume without loss of generality that $0 \in \Gamma$.

Let now $(s_n, t_n) +\cL \in \TT$ be so that $-s_n + \oplam(t_n+W) \to \Gamma$. We know that such elements exist. By compactness, the sequence $(s_n,t_n)+\cL$ has a convergent subnet $(s_\alpha, t_\alpha)+\cL$ which converges to some $(s,t)+\cL \in \TT$.

We show that
\[
\Gamma \subseteq -s+\oplam(t+W) \,.
\]

Let $x \in \Gamma$ be arbitrary.

Since $x+V$ is an open neighbourhood of $x$, by the definition of the rubber topology there exists some $\alpha_0$ such that for all $\alpha > \alpha_0$ the set $(x+V) \cap   (-s_\alpha + \oplam(t_\alpha+W))$ is non-empty. Therefore, there exists some $x_\alpha \in (x+V) \cap  ( -s_\alpha + \oplam(t_\alpha+W))$, which must be unique by the choice of $V$.
This means
\[
 (x+V) \cap   (-s_\alpha + \oplam(t_\alpha+W))= \{ x_ \alpha \} \, \forall \alpha > \alpha_0 \,.
\]

We show that $\{ x_{\alpha} \}_{\alpha > \alpha_0}$ converges as a net to $x$.

Let $U$ be any neighbourhood of $x$ such that $x \in U \subseteq x+V$. Then, since $-s_\alpha+\oplam(t_\alpha+W)$ converges to $\Gamma$, and since $x \in \Gamma$, there exist some
$\alpha_U> \alpha_0$ such that for all $\alpha >\alpha_U$ the set $U \cap  ( -s_\alpha + \oplam(t_\alpha+W))$ is non-empty. Therefore, as
\[
U \cap   -s_\alpha + \oplam(t_\alpha+W) \subseteq x+V \cap   -s_\alpha + \oplam(t_\alpha+W) = \{ x_ \alpha \}
\]
it follows that $U \cap   -s_\alpha + \oplam(t_\alpha+W)  = \{ x_ \alpha \} $. This shows that for all $\alpha > \alpha_U$ we have $x_\alpha \in U$. This proves that
\[
\lim_{\alpha > \alpha_0} x_\alpha =x \,.
\]

Next, for all $\alpha >\alpha _0$ we have $x_\alpha \in -s_\alpha + \oplam(t_\alpha+W)$. This shows that
\[
x_\alpha + s_\alpha \in \oplam(t_\alpha+W) \,.
\]

Next, note that $(x_\alpha + s_\alpha -x-s, t_\alpha- t) +\cL \to (0,0)+\cL$.

By the uniform discreteness of $\cL$ in $G\times H$ we can then find some elements $(l_\alpha,l^\star_\alpha) \in \cL$ such that, in $G \times H$ we have
\[
\lim_\alpha (x_\alpha + s_\alpha -x-s+l_\alpha, t_\alpha- l^\star_\alpha)=(0,0) \,.
\]

Let $U', V'$ be neighbourhoods of $0$ in $G$ and $H$ respectively.

Then, there exists some $\alpha$ such that
\[
(x_\alpha + s_\alpha -x-s+l_\alpha, t_\alpha-t+ l^\star_\alpha) \in U' \times V' \,.
\]

It follows that,
\begin{align*}
x+s &  \in x_\alpha + s_\alpha+l_\alpha - U \subseteq \oplam(t_\alpha+W)+l_\alpha -U=\oplam(t_\alpha+l_\alpha^\star +W) -U\\
\end{align*}
and
\[
t_\alpha+ l^\star_\alpha  \in t+V' \,.
\]

This gives,
\[
x+s \in \oplam(t+W+V') -U' \,.
\]

Since $U',V'$ are arbitrary and $W$ is compact, the claim follows.

\end{proof}

\begin{theorem}\label{thm:torus par 1} Let $(G, H, \cL)$ be a CPS and $W \subseteq H$ be a compact window. Then,
\begin{align*}
\YY_{app}&=\YY_{b,m}=\YY_{m}= \{ \Gamma \in \extW : \dens_{\cA}(\Gamma)= \dens(\cL) \theta_H(W) \} \,.
\end{align*}
Moreover, for each $\Gamma \in \YY_{app}$ there exists some $(s,t)+\cL \in \TT$ with the following properties
\begin{itemize}
\item{} $\Gamma \subseteq -s+\oplam(t+W)$,
\item{} $-s+\oplam(t+W) \in \YY_{app}$,
\item{}
$\dens((-s+\oplam(t+W)) \backslash \Gamma) =0$,
\item{} \eqref{eq9} holds for $\Gamma$ and $-s+\oplam(t+W)$ for $(s,t)+\cL \in \TT$.
\end{itemize}
In particular,
\[
\pi(\Gamma)= (s,t)+\cL+ \{ 0\} \times H_W \,.
\]
\end{theorem}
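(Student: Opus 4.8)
The plan is to close the cycle of inclusions
\[
\YY_{app} \subseteq \YY_{b,m} \subseteq \YY_{m} \subseteq \{ \Gamma \in \extW : \dens_{\cA}(\Gamma)= \dens(\cL)\theta_H(W) \} \subseteq \YY_{app}\,,
\]
reading off the ``moreover'' statements along the way. The first inclusion is the Proposition of the previous section applied with $B=W$, the second is the definition of $\YY_{b,m}$, and the third is Theorem~\ref{thm:main 2}(f): since $W$ is compact we have $\overline W=W$, so $\dens(\cL)\theta_H(W)$ is exactly the maximal density permitted by \cite{HR}. Thus the whole argument reduces to the last inclusion together with the identification of $\pi$.

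To prove that last inclusion I would fix $\Gamma \in \extW$ with $\dens_{\cA}(\Gamma)=\dens(\cL)\theta_H(W)$ and use Lemma~\ref{lem torus} to get $(s,t)+\cL \in \TT$ with $\Gamma \subseteq -s+\oplam(t+W)$ (the first bullet). Since density along a van Hove sequence is translation invariant and $W$ is compact, \cite{HR} forces
\[
\dens(\cL)\theta_H(W)= \dens_{\cA}(\Gamma) \le \ldens_{\cA}(-s+\oplam(t+W)) \le \udens_{\cA}(-s+\oplam(t+W)) \le \dens(\cL)\theta_H(W)\,,
\]
so $-s+\oplam(t+W)$ has density exactly $\dens(\cL)\theta_H(W)$, whence $\dens_{\cA}\bigl((-s+\oplam(t+W))\setminus\Gamma\bigr)=0$ (the third bullet). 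Next I would establish $-s+\oplam(t+W)\in\YY_{app}$ with \eqref{eq9} witnessed by $(s,t)$ itself (the second bullet); granting this, $\delta_{-s+\oplam(t+W)}-\delta_{\Gamma}\ge0$ has $\uM_{\cA}$-value $\dens_{\cA}((-s+\oplam(t+W))\setminus\Gamma)=0$, so the triangle inequality for $\uM_{\cA}$ promotes \eqref{eq9} from $-s+\oplam(t+W)$ to $\Gamma$ (the fourth bullet), giving $\Gamma\in\YY_{app}$ and closing the cycle, and the formula $\pi(\Gamma)=(s,t)+\cL+\{0\}\times H_W$ is then immediate from the definition of $\pi$.

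For the step $-s+\oplam(t+W)\in\YY_{app}$ I would pick $K_n,U_n,g_n$ as usual but with $U_n$ open precompact and $\theta_H(\overline{U_n})<\theta_H(W)+\tfrac1n$ (possible by local compactness together with outer regularity of Haar measure). Since $g_n=1$ on $K_n$, $g_n=0$ off $U_n$, and $0\le g_n\le1$, one has the pointwise bound of measures
\[
\bigl| T_{-s}\omega_{T_t g_n} - \delta_{-s+\oplam(t+W)} \bigr| \le \delta_{-s+\oplam(t+(U_n\setminus W))} + \delta_{-s+\oplam(t+(W\setminus K_n))}\,,
\]
so $\uM_{\cA}$ of the left side is at most $\udens_{\cA}(\oplam(t+(U_n\setminus W)))+\udens_{\cA}(\oplam(t+(W\setminus K_n)))$. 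Using $\oplam(t+(U_n\setminus W))=\oplam(t+U_n)\setminus\oplam(t+W)$ and the density of $\oplam(t+W)$ just found, the first term equals $\udens_{\cA}(\oplam(t+U_n))-\dens(\cL)\theta_H(W)\le\dens(\cL)\bigl(\theta_H(\overline{U_n})-\theta_H(W)\bigr)<\dens(\cL)/n\to0$ by \cite{HR}.

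The remaining point — that $\udens_{\cA}(\oplam(t+(W\setminus K_n)))\to0$, equivalently $\ldens_{\cA}(\oplam(t+K_n))\to\dens(\cL)\theta_H(W)$, i.e. that maximal density of $\oplam(t+W)$ is inherited by the compact sub-windows $K_n$ — is the main obstacle. It does not follow from the estimates of \cite{HR} alone, since $\overline{W\setminus K_n}$ may have nearly full measure; it is precisely the structural content of the theory of maximal-density weak model sets in \cite{BHS,KR}, amounting to the statement that the cut-off combs $\omega_{g_n}$ converge in $\uM_{\cA}$-mean to $\delta_{\oplam(t+W)}$ — the same fact underlying the autocorrelation formula $\dens(\cL)\omega_{c(W)}$ in \cite[Thm.~7]{BHS} — which I would import directly. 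Everything else (the three easy inclusions, the bookkeeping with $\uM_{\cA}$ and the $\dens$-bounds, and the final reading-off of $\pi$) is routine.
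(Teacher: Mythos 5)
Your overall architecture coincides with the paper's: the three easy inclusions, Lemma~\ref{lem torus} plus the density squeeze for the first and third bullets, and the promotion of \eqref{eq9} from $-s+\oplam(t+W)$ to $\Gamma$ via $\uM_{\cA}(\delta_{-s+\oplam(t+W)}-\delta_{\Gamma})=0$ and the triangle inequality. But the one step you leave open --- that \eqref{eq9} holds for $-s+\oplam(t+W)$ itself --- is exactly where the paper's proof has its only real idea, and your proposed treatment of it does not work. The intermediate claim you reduce to, namely $\udens_{\cA}\bigl(\oplam(t+(W\setminus K_n))\bigr)\to 0$, equivalently that $\ldens_{\cA}(\oplam(t+K_n))\to\dens(\cL)\theta_H(W)$ for an \emph{arbitrary} admissible choice of compacts $K_n\subseteq W$ with $\theta_H(W\setminus K_n)<1/n$, is false rather than merely hard to import: the set $(-t+L^\star)\cap W$ is countable, so one can choose a compact $K_n\subseteq W$ with $\theta_H(W\setminus K_n)<1/n$ avoiding it entirely, which gives $\oplam(t+K_n)=\emptyset$. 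Maximal density is not inherited by compact sub-windows, and this is not a fact available in \cite{BHS,KR}. The mean convergence $\uM_{\cA}(|T_{-s}\omega_{T_tg_n}-\delta_{-s+\oplam(t+W)}|)\to 0$ that you correctly identify as the fact actually needed is true, but your bound $|g_n-1_{t+W}|\le 1_{t+(U_n\setminus W)}+1_{t+(W\setminus K_n)}$ is too lossy to yield it: it replaces the weights $1-g_n(x^\star-t)$, whose average over $\oplam(t+W)$ is small precisely because $g_n$ is continuous and $L^1$-close to $1_W$, by the constant $1$. The two statements you call equivalent are not equivalent.

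The fix the paper uses is a one-line observation: since $W$ is compact, one may take $K_n=W$ for every $n$. By Corollary~\ref{C1}, condition \eqref{eq9} is insensitive to the choice of the approximants as long as $g_n\to 1_W$ in $L^1(H)$, so this choice is legitimate. Then $g_n\equiv 1$ on $W$, hence $T_{-s}\omega_{T_tg_n}\ge\delta_{-s+\oplam(t+W)}$, the difference is a positive measure, and its mean equals $\dens(\cL)\int_Hg_n\,\dd\theta_H-\dens(\cL)\theta_H(W)\le\dens(\cL)\bigl(\theta_H(U_n)-\theta_H(W)\bigr)<\dens(\cL)/n$, using the density of $-s+\oplam(t+W)$ you already established together with the exact mean $\dens(\cL)\int_H g_n\,\dd\theta_H$ of $\omega_{g_n}$ for $g_n\in\Cc(H)$. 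Your problematic second term never arises, and your refinement $\theta_H(\overline{U_n})<\theta_H(W)+1/n$, while correct, becomes unnecessary. With this replacement the remainder of your argument goes through as in the paper.
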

\begin{proof}

We already know that
\[
\YY_{app} \subseteq \YY_{b,m} \subseteq \YY_{m} \subseteq \{ \Gamma \in \extW : \dens_{\cA}(\Gamma)= \dens(\cL) \theta_H(W) \} \,.
\]

Now, let $\Gamma \in \extW$ be so that  $\dens_{\cA}(\Gamma)= \dens(\cL) \theta_H(W)$.

By Lemma~\ref{lem torus} there exists some $(s,t) \in \TT$ such that
\[
\Gamma \subseteq -s+\oplam(t+W) \,.
\]
We then have
\begin{align*}
\dens(\cL) \theta_H(W)&= \dens_{\cA}(\Gamma) \leq \liminf_k \frac{\delta_{-s+\oplam(t+W)}(A_k)}{|A_k|} \\
&\leq \limsup_k \frac{\delta_{-s+\oplam(t+W)}(A_k)}{|A_k|} \leq \dens(\cL) \theta_H(t+W)
\end{align*}
with the last inequality following from \cite{HR}.

This implies that the density of $-s+\oplam(t+W)$ exists with respect to $\cA$ and
\[
\dens(\cL) \theta_H(W)= \dens_{\cA}(\Gamma) = \dens_{\cA}(-s+\oplam(t+W)) \,.
\]
Since $\Gamma \subseteq -s+\oplam(t+W)$ we also have
\begin{displaymath}
\dens((-s+\oplam(t+W)) \backslash \Gamma) =0 \,.
\end{displaymath}

Finally, we show that\eqref{eq9} holds for $\Gamma$ and $-s+\oplam(t+W)$ for $(s,t)+\cL \in \TT$.

We first show it holds for $-s+\oplam(t+W)$. For this, note that since $W$ is compact, we can chose $K_n =W$ and $W \subseteq U_n$. Then, the choice of $g_n$ implies that
$g_n =1$ on $W$.
Then $T_{-s} \omega_{T_t g_n} \geq \delta_{-s+\oplam(t+W)}$ and hence

\begin{align*}
&\uM_{\cA}(\left| T_{-s}\omega_{T_t g_n} - \delta_{-s+\oplam(t+W)}\right|)  = \uM_{\cA}(T_{-s}\omega_{T_t g_n}  - \delta_{-s+\oplam(t+W)}) \\
 &= \dens(\cL) \int_{H} T_t g_n(t) \dd  y - \dens(\cL) \theta(t+W)  \leq \dens(\cL) \left( \theta_H(U_n)- \theta_H(K_n) \right) < \frac{\dens{\cL)}}{n} \,.
\end{align*}

Therefore
\begin{displaymath}
\lim_n \uM_{\cA}(\left| T_{-s}\omega_{T_t g_n} - \delta_{-s+\oplam(t+W)}\right|)=0 \,.
\end{displaymath}

The claim follows now from
\begin{displaymath}
\uM_{\cA}(\left| \delta_{\Gamma} - \delta_{-s+\oplam(t+W)}\right|)=0 \,.
\end{displaymath}
\end{proof}

As consequences we get:

\begin{proposition}\label{Prop1} Let $(G, H, \cL)$ be a CPS and $W \subseteq H$ be a compact window, and $\Gamma \in \YY_{app}$. Then, there exists some $(s,t)+\cL \in \TT$ such that
\[
\Gamma \subseteq -s+\oplam(t+W) \,.
\]
Moreover, if $\Gamma \subseteq -s'+\oplam(t'+W)$ then
$$(s,t)+\cL + 0\times H_{W}= (s',t')+\cL + 0\times H_{W} \,.$$
\end{proposition}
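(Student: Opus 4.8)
Since $\YY_{app}\subseteq\extW$ by definition, the first assertion is immediate from Lemma~\ref{lem torus} (equivalently, from the ``moreover'' part of Theorem~\ref{thm:torus par 1}): it produces some $(s,t)+\cL\in\TT$ with $\Gamma\subseteq -s+\oplam(t+W)$. For the uniqueness statement the plan is to establish the following sub‑claim and apply it twice: \emph{whenever $(\sigma,\tau)\in G\times H$ satisfies $\Gamma\subseteq -\sigma+\oplam(\tau+W)$, then \eqref{eq9} holds for $\Gamma$ with the parameter $(\sigma,\tau)$}. Granting the sub‑claim, both the $(s,t)$ from the first part and the hypothesised $(s',t')$ satisfy \eqref{eq9} for $\Gamma$, so the lemma characterising validity of \eqref{eq9} (the one used earlier to make $\pi$ well defined) forces $(s,t)+\cL+\{0\}\times H_W=(s',t')+\cL+\{0\}\times H_W$, which is the claim.

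To prove the sub‑claim, fix $(\sigma,\tau)$ with $\Gamma\subseteq -\sigma+\oplam(\tau+W)$. Since $\Gamma\in\YY_{app}$, Theorem~\ref{thm:torus par 1} gives $\dens_{\cA}(\Gamma)=\dens(\cL)\theta_H(W)$. Because $W$ is compact we have $\overline{W}=W$, so the density bounds of \cite{HR} squeeze
\[
\dens(\cL)\theta_H(W)=\dens_{\cA}(\Gamma)\leq\liminf_k\frac{\delta_{-\sigma+\oplam(\tau+W)}(A_k)}{|A_k|}\leq\limsup_k\frac{\delta_{-\sigma+\oplam(\tau+W)}(A_k)}{|A_k|}\leq\dens(\cL)\theta_H(W) \,,
\]
so $\dens_{\cA}(-\sigma+\oplam(\tau+W))$ exists and equals $\dens(\cL)\theta_H(W)$; since $\Gamma\subseteq -\sigma+\oplam(\tau+W)$ this yields $\uM_{\cA}(|\delta_{-\sigma+\oplam(\tau+W)}-\delta_{\Gamma}|)=\dens(\cL)\theta_H(W)-\dens_{\cA}(\Gamma)=0$. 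Next I would verify \eqref{eq9} for the full model set $-\sigma+\oplam(\tau+W)$ itself, repeating the computation at the end of the proof of Theorem~\ref{thm:torus par 1}: compactness of $W$ lets us take $K_n=W\subseteq U_n$, hence $g_n=1$ on $W$, so $T_{-\sigma}\omega_{T_\tau g_n}\geq\delta_{-\sigma+\oplam(\tau+W)}$ and
\[
\uM_{\cA}(|T_{-\sigma}\omega_{T_\tau g_n}-\delta_{-\sigma+\oplam(\tau+W)}|)=\dens(\cL)\Bigl(\int_H g_n(y)\dd y-\theta_H(W)\Bigr)\leq\dens(\cL)\bigl(\theta_H(U_n)-\theta_H(W)\bigr)<\frac{\dens(\cL)}{n} \,.
\]
Combining the last two displays with the triangle inequality for $\uM_{\cA}$ gives $\uM_{\cA}(|T_{-\sigma}\omega_{T_\tau g_n}-\delta_{\Gamma}|)\to0$, i.e.\ \eqref{eq9} holds for $\Gamma$ with $(\sigma,\tau)$, proving the sub‑claim.

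There is no genuine obstacle: the proposition is a bookkeeping consequence of Lemma~\ref{lem torus}, Theorem~\ref{thm:torus par 1} and the already‑established well‑definedness of $\pi$. The only point to keep in mind is that compactness of $W$ is used twice — once so that Lemma~\ref{lem torus} applies, and once to get $\theta_H(\overline{W})=\theta_H(W)$ so that the \cite{HR} sandwich collapses — which is exactly why the statement is phrased for compact windows.
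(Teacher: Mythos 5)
Your proof is correct, and the uniqueness part takes a genuinely different route from the paper's. The paper argues directly with the two containing translates: assuming $\Gamma \neq \emptyset$, the sets $-s+\oplam(t+W)$ and $-s'+\oplam(t'+W)$ intersect, which forces $s-s' \in L$; writing $(s',t')=(s,t)+(x,x^\star)+(0,h)$ one gets $\Gamma \subseteq -s+\oplam\bigl((t+W)\cap(t+h+W)\bigr)$, and the density sandwich of \cite{HR} yields $\theta_H(W)=\theta_H(W\cap(h+W))$, whence $h \in H_W$ by Lemma~\ref{lemma H_2}. You instead upgrade the ``moreover'' part of Theorem~\ref{thm:torus par 1} to the statement that \emph{every} containing translate $(\sigma,\tau)$ of $\Gamma$ satisfies \eqref{eq9} — your squeeze argument does establish this, and it is needed, since the theorem as stated only asserts the existence of one such $(s,t)$ — and then you invoke the lemma underlying the well-definedness of $\pi$. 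Both are sound; your version makes transparent that the uniqueness in Proposition~\ref{Prop1} is literally the well-definedness of $\pi$ evaluated at containing translates and reuses proven machinery, while the paper's direct argument is self-contained and exposes the geometric reason, namely that the $H$-shift $h$ is an almost-everywhere period of $W$.

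One caveat: the well-definedness lemma you quote is stated (and is only true) for $\theta_H(W)>0$, since for a null window \eqref{eq9} holds for every $(\sigma,\tau)$ and thus carries no information. Proposition~\ref{Prop1} does not assume $\theta_H(W)>0$, so your reduction silently loses the degenerate case. It is trivially repaired: if $\theta_H(W)=0$ then $H_W=H$, the claimed identity reduces to $s-s'\in L$, and (for $\Gamma\neq\emptyset$) picking any $x\in\Gamma$ gives $x+s,\,x+s'\in L$, hence $s-s'\in L$. (The paper itself dismisses the empty/null case as ``trivial,'' so this is a one-line patch rather than a substantive gap, but it should be said.)
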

\begin{proof}

The existence of $(s,t)$ follow from Lemma~\ref{lem torus}. Now, let $(s,t)$ and $(s',t')$ be so that
\begin{align*}
\Gamma &\subseteq -s+\oplam(t+W) \\
\Gamma &\subseteq -s'+\oplam(t'+W) \,.
\end{align*}

Note that if $\Gamma = \emptyset$ we must have $\theta(W)=0$ and the claim is trivial. So we can assume without loss of generality that $\Gamma \neq \emptyset$. Then
\begin{align*}
  -s+\oplam(t+W) \cap  -s'+\oplam(t'+W)  & \neq \emptyset \Longrightarrow \\
  -s+s' + \oplam(t+W) \cap \oplam(W)  &\neq \emptyset \Longrightarrow \\
  s-s' &\in L =\pi_G(\cL) \,.
\end{align*}
This shows that there exists some $(x,x^\star) \in \cL$ and $h \in H$ such that
\[
(s',t')=(s,t)+(x,x^\star)+(0,h) \,.
\]
Note that
\[
-s'+\oplam(t'+W)= -s +\oplam(t+h+W) \,.
\]
We complete the proof by showing that $h \in H_W$.

Indeed,
\[
\Gamma \subseteq \left(-s+\oplam(t+W)\right) \cap \left(-s+\oplam(t+h+W)\right)=-s+\oplam((t+W) \cap (t+h+W))
\]
which implies that
\begin{align*}
\dens(\cL) \theta_H(W)&= \dens(\Gamma) \leq \overline{\dens}(-s+\oplam((t+W) \cap (t+h+W))) \\
& \leq \dens(\cL) \theta_H((t+W) \cap (t+h+W))=  \dens(\cL) \theta_H(W \cap (h+W)) \,.
\end{align*}
This implies that $\theta_H(W)=\theta_H(W \cap (h+W))$ from where it follows that $h \in H_W$.

\end{proof}

By combining Theorem~\ref{thm:torus par 1} and Proposition~\ref{Prop1} we get the following equivalent characterisation for $\pi$.

\begin{corollary}\label{cor 84} Let $\theta(W)> 0, \Gamma \in \YY_{app}$ and $(s,t) \in G \times H$. Then, the following are equivalent:

\begin{itemize}
  \item[(i)] $\pi(\Gamma)= (s,t)+\cL+ \{ 0\} \times H_W$.
  \item[(ii)] There exists some $(s',t') \in G \times H$ such that $(s,t)+\cL+ \{ 0\} \times H_W=(s',t')+\cL+ \{ 0\} \times H_W$ and
  \[
  \Gamma \subseteq -s'+\oplam(t'+W) \,.
  \]
  \item[(iii)] $s+\Gamma \subseteq L$ and
\[
\theta_H\left( (\overline{ \{ (s+\Gamma)^\star\}} \cap (t+W) \right) =\theta_H(W) \,.
\]
\end{itemize}
\end{corollary}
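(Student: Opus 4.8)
The plan is to prove the cycle (i) $\Rightarrow$ (ii) $\Rightarrow$ (iii) $\Rightarrow$ (ii) $\Rightarrow$ (i), using the three earlier results: Theorem~\ref{thm:torus par 1} (which says that for a compact window $W$, $\YY_{app}$ coincides with the maximal-density elements of $\extW$, and that for each $\Gamma \in \YY_{app}$ one can find $(s,t)+\cL$ with $\Gamma \subseteq -s+\oplam(t+W)$, $\dens((-s+\oplam(t+W))\setminus\Gamma)=0$, and $\pi(\Gamma)=(s,t)+\cL+\{0\}\times H_W$), Proposition~\ref{Prop1} (uniqueness of $(s,t)+\cL+\{0\}\times H_W$ among all window positions containing $\Gamma$), and the previous Lemma defining $\pi$ via condition~\eqref{eq9}. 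First I would establish (i) $\Leftrightarrow$ (ii): if $\pi(\Gamma)=(s,t)+\cL+\{0\}\times H_W$, then by Theorem~\ref{thm:torus par 1} there is some $(s_0,t_0)$ with $\Gamma\subseteq -s_0+\oplam(t_0+W)$ and $\pi(\Gamma)=(s_0,t_0)+\cL+\{0\}\times H_W$, so $(s_0,t_0)$ serves as the $(s',t')$ in (ii); conversely, if such an $(s',t')$ exists, then since $\Gamma\subseteq -s'+\oplam(t'+W)$ Proposition~\ref{Prop1} forces the class $(s',t')+\cL+\{0\}\times H_W$ to equal $\pi(\Gamma)$ (because $\pi(\Gamma)$ itself is represented by some window position containing $\Gamma$), and by hypothesis that class equals $(s,t)+\cL+\{0\}\times H_W$, giving (i).

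The remaining work is the equivalence of (ii) with the concrete condition (iii). For (ii) $\Rightarrow$ (iii): from $\Gamma\subseteq -s'+\oplam(t'+W)$ we get $s'+\Gamma\subseteq \oplam(t'+W)\subseteq L$, and since $s-s'\in L$ (as both classes agree modulo $\cL+\{0\}\times H_W$) we get $s+\Gamma\subseteq L$ as well; moreover $(s'+\Gamma)^\star\subseteq t'+W$, so $\overline{\{(s'+\Gamma)^\star\}}\subseteq t'+W$ since $W$ is compact. Writing $s'=s+x$ for $(x,x^\star)\in\cL$ and $t'=t+x^\star+h$ with $h\in H_W$, one checks $(s+\Gamma)^\star=(s'+\Gamma)^\star - x^\star$, hence $\overline{\{(s+\Gamma)^\star\}}\subseteq t'+W-x^\star=t+h+W$. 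The key quantitative input is that $\dens((-s'+\oplam(t'+W))\setminus\Gamma)=0$ (from Theorem~\ref{thm:torus par 1}), which by the density bounds of \cite{HR} forces $\overline{\{(s'+\Gamma)^\star\}}$ to have full measure $\theta_H(W)$ inside $t'+W$ — otherwise the density of the complement would be strictly positive. Translating by $-x^\star$ and using $\theta_H(W\cap(h+W))=\theta_H(W)$ (which holds since $h\in H_W$, by Lemma~\ref{lemma H_2}) then gives $\theta_H(\overline{\{(s+\Gamma)^\star\}}\cap(t+W))=\theta_H(W)$, which is (iii). For (iii) $\Rightarrow$ (ii): from $s+\Gamma\subseteq L$ and $\theta_H(\overline{\{(s+\Gamma)^\star\}}\cap(t+W))=\theta_H(W)$, set $K:=\overline{\{(s+\Gamma)^\star\}}$, a compact subset of $H$; then $\Gamma\subseteq -s+\oplam(K)\subseteq -s+\oplam(\overline{(t+W)})$ up to a null correction. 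More carefully, since $K\cap(t+W)$ has measure $\theta_H(W)=\theta_H(t+W)$, we have $t+W\subseteq K$ up to a $\theta_H$-null set, and one argues that $\Gamma\subseteq -s+\oplam(t+W)$ directly from $K\subseteq t+W$ (which holds because $(s+\Gamma)^\star\subseteq t+W$ would need to be verified) — taking $(s',t')=(s,t)$ then gives (ii).

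The main obstacle I anticipate is the careful bookkeeping in (iii): the closure $\overline{\{(s+\Gamma)^\star\}}$ need not itself be contained in $t+W$ \emph{a priori} — one only knows the star-images of points of $\Gamma$ land in $t+W$ when $\Gamma\subseteq -s+\oplam(t+W)$, so the implication (iii) $\Rightarrow$ (ii) requires reconstructing a valid window position from purely measure-theoretic data, and one must be vigilant that "full measure inside $t+W$" together with compactness of $W$ really does pin down the position modulo $H_W$ rather than merely up to measure zero. The cleanest route is probably to show (iii) $\Rightarrow$ (i) directly: condition (iii) determines $t+W$ up to a $\theta_H$-null set, hence determines the class in $\TT_{red}$ (since $H_W$ is exactly the group of $\theta_H$-null-shifts of $W$ by Lemma~\ref{lemma H_2}), and one then identifies this class with $\pi(\Gamma)$ by comparing with the representative supplied by Theorem~\ref{thm:torus par 1}, whose star-image closure satisfies (iii) by the (ii) $\Rightarrow$ (iii) computation already done. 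This sidesteps the need to explicitly rebuild a window position and reduces everything to the uniqueness statement in Proposition~\ref{Prop1} together with the measure-zero characterisation of $H_W$.
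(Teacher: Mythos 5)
Your plan is essentially the paper's proof: (i)$\Leftrightarrow$(ii) comes from Lemma~\ref{lem torus}/Theorem~\ref{thm:torus par 1} together with the uniqueness statement in Proposition~\ref{Prop1}, and (ii)$\Rightarrow$(iii) from placing $W':=\overline{(s+\Gamma)^\star}$ inside $t+h+W$, forcing $\theta_H(W')=\theta_H(W)$ via the density bounds of \cite{HR} and $\dens_{\cA}(\Gamma)=\dens(\cL)\theta_H(W)$, and then a null-symmetric-difference computation using Lemma~\ref{lemma H_2}. Your worry about the middle sketch of (iii)$\Rightarrow$(ii) is justified --- taking $(s',t')=(s,t)$ cannot work, since (iii) is invariant under replacing $t$ by $t+h$ with $h\in H_W$ while $\Gamma\subseteq -s+\oplam(t+W)$ is not --- but the ``cleanest route'' of your final paragraph (compare with a representative supplied by Lemma~\ref{lem torus}, show the two candidate windows agree up to a $\theta_H$-null set, and conclude via the characterisation of $H_W$ as the group of null shifts) is precisely what the paper's proof of (iii)$\Rightarrow$(ii) does.
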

\begin{proof}
By Lemma~\ref{lem torus} there exists some $(s',t') \in G \times H$ such that
  \[
  \Gamma \subseteq -s'+\oplam(t'+W) \,.
  \]
In particular, $\pi(\Gamma)= (s',t')+\cL+ \{ 0\} \times H_W$.

The equivalence \textbf{(i)} $\Longleftarrow$ \textbf{(iii)}  follows from here.

\smallskip
\textbf{(ii)} $\Longleftrightarrow$ \textbf{(iii)}

Since
\[
\Gamma \subseteq -s'+\oplam(t'+W)
\]
we have
\begin{equation}\label{eq78}
s'+\Gamma \subseteq \oplam(t'+W) \,.
\end{equation}
Next, $(s,t)+\cL+ \{ 0\} \times H_W=(s',t')+\cL+ \{ 0\} \times H_W$ gives that there exists some $x \in L$ and $h \in H_W$ such that
\[
(s',t')=(s,t)+(x,x^\star)+(0,h) \,.
\]
Therefore, \eqref{eq78} becomes
\[
s+x+\Gamma \subseteq \oplam(t+x^\star+h+W)
\]
and hence
\[
s+\Gamma \subseteq -x+\oplam(t+x^\star+h+W) =  \oplam(t+h+W) \,.
\]
This implies that $s+ \Gamma \subseteq L$ and
\[
(s+\Gamma)^\star \subseteq t+h+W
\]
and hence, since $t+h+W$ is compact we have
\[
\overline{(s+\Gamma)^\star} \subseteq t+h+W \,.
\]
Let
\[
W':= \overline{(s+\Gamma)^\star} \subseteq t+h+W \,.
\]
Then,
\[
s+ \Gamma \subseteq \oplam(W')
\]
and hence, by compactness of $W'$ have
\[
\udens_{\cA}{s+\Gamma} \leq \dens(\cL) \theta(W') \,.
\]

Since $\Gamma \in \YY_{app}$ by Theorem~\ref{thm:torus par 1} we have
\[
\dens_{\cA}{\Gamma} = \dens(\cL) \theta(W)= \dens(\cL) \theta(t+h+W)  \,.
\]
Next, using $W' \subseteq t+h+W$ we get
\begin{align*}
\dens(\cL) \theta(t+h+W)&=\dens_{\cA}{\Gamma}=\dens_{\cA}{s+\Gamma} \leq \udens_{\cA}(s+\Gamma) \\
&\leq \dens(\cL) \theta(W')\leq \dens(\cL) \theta(t+h+W)
\end{align*}
and hence
\[
\theta(W')=  \theta(t+h+W) \,.
\]
Finally, since $h \in H_W$ we have
\[
\theta_{H}( (t+W) \Delta (t+h+W))=0 \,.
\]
Since $W' \subseteq  t+h+W$ we have $W' \backslash t+W \subseteq (t+h+W) \Delta (t+W)$ and hence
\[
\theta_{W}(W' \backslash t+W )=0 \,.
\]
Therefore,
\begin{align*}
\theta(W' \cap (t+W))=\theta(W') - \theta(W' \backslash (t+W)) = \theta_H(W')= \theta_H(t+h+W)=\theta_H(W) \,.
\end{align*}

\medskip

\textbf{(iii)} $\Longrightarrow$ \textbf{(ii)}

Set again
\[
W'= \overline{ \{ (s+\Gamma)^\star\}} \,.
\]

Then
\[
s+\Gamma =(s-s')+s'+\Gamma \subseteq  (s-s')+\oplam(t'+W) \,.
\]

By (iii) we have $s+\Gamma \subseteq L$. Moreover, $s'+\Gamma \subseteq \oplam(t'+W) \subseteq L$. From here it follows that $(s-s') \in L$.
Therefore
\[
s+\Gamma =(s-s')+s'+\Gamma \subseteq  (s-s')+\oplam(t'+W)\stackrel{\eqref{eq:translate oplam(W)}}{=}\oplam((s-s')^\star+t'+W) \,.
\]
This implies that $(s+\Gamma)^\star \subseteq (s'-s)^\star+t'+W$, and hence, since $W$ is compact, we have
\[
W' \subseteq (s-s')^\star+t'+W \,.
\]

Looking again at the densities we get
\begin{align*}
\dens(\cL) \theta_H((s-'s)^\star+W)&=\dens(\cL) \theta_H(W)= \dens(\Gamma) \leq \dens( \oplam(W')) \\
&\leq \dens(\cL) \theta_H(W') \leq \dens(\cL) \theta_H((s-s')^\star+t'+W)
\end{align*}
and hence
\[
\theta_H(W')=\theta_H((s-s')^\star+t'+W) \,.
\]

Finally by (iii) we have
\[
\theta_H\left( W' \cap (t+W) \right) =\theta_H(W) \,.
\]

Then
\begin{align*}
\theta_H(t+W \Delta ((s-s')^\star+t'+W)) & =\theta_H((t+W) \Delta W' \Delta W' \Delta   ((s-s')^\star+t'+W)) \\
&\leq  \theta_H(\bigl(W \Delta W' \bigr) \cup \bigl( W' \Delta   ((s'-s)^\star+t'+W)\bigr)) \\
&\leq  \theta_H\bigl(W \Delta W' \bigr) + \theta_H \bigl( W' \Delta   ((s-s')^\star+t'+W)\bigr) \\
&=0 +\theta_H \bigl( W' \Delta   ((s-s')^\star+t'+W)\bigr) =0
\end{align*}
with the last equality following from $W \subseteq (s-s')^\star+t'+W$ and $\theta_H(W')=\theta_H((s-s')^\star+t'+W)$.

This implies that $(s-s')^\star+t' -t\in H_W$.

Therefore
\begin{align*}
\pi(\Gamma)&=(s',t')+\cL+ \{ 0\} \times H_W\\
&= (s,t)+(s'-s,(s'-s)^\star)+(0,t'-t+(s-s')^\star)+ \cL+ \{ 0\} \times H_W \\
&=(s,t)+\cL+ \{ 0\} \times H_W \,.
\end{align*}

This proves the claim.

\end{proof}

Combining all the previous results, we get:

\begin{theorem}\label{thm:8.5} Let $(G, H, \cL)$ be a CPS and $W \subseteq H$ be a compact window of positive measure. Then, the mapping $\pi : \YY_{app} \to \TT$ has the following properties:
\begin{itemize}
  \item[(a)] For $(s,t)+\cL \in \TT$ we have $ (s,t)+\cL \in \mbox{Im}(\pi)$ if and only if $-s+\oplam(t+W)$ is a maximal density weak model set.
  \item[(b)] $\theta_{\TT_{red}}(\mbox{Im}(\pi)) = 1$.
  \item[(c)] $\pi$ is continuous.
  \item[(d)] $\pi$ induces a Borel factor from $(\extW, G, m)$ to $(\TT_{red}, G)$.
\end{itemize}
\end{theorem}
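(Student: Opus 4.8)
The plan is to establish (a)--(d) roughly in this order, feeding (a) and (b) into the factor statement (d). Part (a) is a direct consequence of the machinery already in place. If $-s+\oplam(t+W)$ has maximal density, then it lies in $\extW$ and has density $\dens(\cL)\theta_H(W)$, so by Theorem~\ref{thm:torus par 1} it belongs to $\YY_{app}$; applying the last part of that theorem (together with Proposition~\ref{Prop1}) to $\Gamma=-s+\oplam(t+W)$ with the trivial containment $\Gamma\subseteq-s+\oplam(t+W)$ yields $\pi(\Gamma)=(s,t)+\cL+\{0\}\times H_W\in\mbox{Im}(\pi)$. Conversely, if $(s,t)+\cL+\{0\}\times H_W=\pi(\Gamma)$ for some $\Gamma\in\YY_{app}$, then Corollary~\ref{cor 84} ((i)$\Leftrightarrow$(ii)) provides a representative $(s',t')$ with $\Gamma\subseteq-s'+\oplam(t'+W)$; since $\dens_\cA(\Gamma)=\dens(\cL)\theta_H(W)$ and $W$ is compact, this containment together with the density bounds of \cite{HR} forces $-s'+\oplam(t'+W)$ to be maximal. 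It then remains to see that maximal density is invariant under the identification defining $\TT_{red}$: writing $-s'+\oplam(t'+W)=-s+\oplam(t+h+W)$ with $h\in H_W$ (so $\theta_H(W\Delta(h+W))=0$ by Lemma~\ref{lemma H_2}), the ``union trick'' --- $\oplam(t+W)\subseteq\oplam(t+(W\cup(h+W)))$, $\theta_H(W\cup(h+W))=\theta_H(W)$, and \cite{HR} applied to the compact window $W\cup(h+W)$ --- shows $\oplam(t+(W\cup(h+W)))$, and after discarding a density-zero remainder also $\oplam(t+W)$, is maximal.

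For (b) I would invoke Moody's theorem: by Theorem~\ref{thm:moody} and the remark following it, $-s+\oplam(t+W)$ is a maximal density weak model set for $\theta_\TT$-almost every $(s,t)+\cL\in\TT$. By (a) the image of this full-measure set under the quotient $q\colon\TT\to\TT_{red}$ lies in $\mbox{Im}(\pi)$; since $q$ is a continuous surjective homomorphism of compact groups it carries $\theta_\TT$ to $\theta_{\TT_{red}}$, whence $\theta_{\TT_{red}}(\mbox{Im}(\pi))=1$.

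The main work is (c). I would establish continuity of $\pi$ by a subnet/closed-graph argument. Let $\Gamma_\alpha\to\Gamma$ in $\YY_{app}$; since $\TT_{red}$ is compact and Hausdorff, it suffices to show that every limit point of $(\pi(\Gamma_\alpha))$ equals $\pi(\Gamma)$. Using Lemma~\ref{lem torus}, choose for each $\alpha$ a lift $(s_\alpha,t_\alpha)+\cL\in\TT$ with $\Gamma_\alpha\subseteq-s_\alpha+\oplam(t_\alpha+W)$ that projects to $\pi(\Gamma_\alpha)$, and pass to a subnet along which $(s_\alpha,t_\alpha)+\cL\to(s,t)+\cL$ in the compact group $\TT$. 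The convergence argument inside the proof of Lemma~\ref{lem torus} uses only compactness of $W$ and equi-uniform-discreteness of the point sets involved, so it carries over verbatim to a convergent net of window positions and gives $\Gamma\subseteq-s+\oplam(t+W)$. Since $\Gamma\in\YY_{app}$ has density $\dens(\cL)\theta_H(W)$, the set $-s+\oplam(t+W)$ is a maximal density weak model set, and Theorem~\ref{thm:torus par 1} with Proposition~\ref{Prop1} identifies $\pi(\Gamma)=(s,t)+\cL+\{0\}\times H_W$, which is exactly the chosen limit of $\pi(\Gamma_\alpha)$. I expect this ``passage to the limit in the window,'' together with carefully tracking the $H_W$-ambiguity (the reason the codomain must be $\TT_{red}$ rather than $\TT$), to be the real obstacle.

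Finally, (d) is assembled from the previous parts. By Theorem~\ref{thm:main 2}(a) and Theorem~\ref{thm:torus par 1} the set $\YY_{app}=\YY_{b,\m}=\YY_{\m}$ has full $\m$-measure in $\extW$; by (c) the map $\pi$ is continuous, hence Borel, and it is $G$-equivariant by the lemma proved immediately after its definition, with $G$ acting on $\TT_{red}$ through translation in the first coordinate. Then $\pi_*\m$ is a $G$-invariant probability measure on $\TT_{red}$, and because the image of $G$ in $\TT_{red}$ is dense (as $\pi_H(\cL)$ is dense in $H$) this translation action is minimal, hence uniquely ergodic with invariant measure $\theta_{\TT_{red}}$; therefore $\pi_*\m=\theta_{\TT_{red}}$. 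Combined with (b), this exhibits $(\TT_{red},G,\theta_{\TT_{red}})$ as a Borel factor of $(\extW,G,\m)$ in the sense of Section~\ref{Sect: prel}.
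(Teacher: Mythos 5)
Your parts (b), (d) and the forward direction of (a) are correct and follow the paper's own reasoning, and your proof of (c) is valid but takes a genuinely different route: where the paper runs Schlottmann's compactness argument on the set $\bigcap_{x\in\Gamma}(x^\star-W)$ to show directly that the window positions converge in $H/H_W$, you lift $\pi(\Gamma_\alpha)$ to $\TT$, extract a convergent subnet by compactness of $\TT$, and rerun the containment argument of Lemma~\ref{lem torus} to identify the limit. This is a clean alternative (it reuses Lemma~\ref{lem torus} instead of introducing a second compactness argument), at the cost of being a proof by contradiction rather than an explicit convergence statement.

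There is, however, a gap in your converse direction of (a). You correctly isolate the issue the paper glosses over: one must check that maximal density of $-s+\oplam(t+W)$ depends only on the class of $(s,t)$ modulo $\cL+\{0\}\times H_W$. But your ``union trick'' does not establish this. Writing $-s'+\oplam(t'+W)=-s+\oplam(t+h+W)$ with $h\in H_W$, you pass to the compact window $W\cup(h+W)$, correctly conclude that $\oplam(t+(W\cup(h+W)))$ is maximal, and then want to discard the remainder $\oplam\bigl(t+((h+W)\setminus W)\bigr)$ as having density zero. The only available tool is \cite{HR}, which bounds $\udens_{\cA}$ of $\oplam(V)$ by $\dens(\cL)\,\theta_H(\overline{V})$ --- not by $\theta_H(V)$. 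The set $(h+W)\setminus W$ is a relatively open, Haar-null subset of the compact set $h+W$, and such a set can be dense in a positive-measure portion of $W\cap(h+W)$ (for instance, $h+W$ can be a fat Cantor set together with one point in each of its gaps), so $\theta_H\bigl(\overline{(h+W)\setminus W}\bigr)$ may well be positive. Hence the remainder is not known to have upper density zero, and $\ldens_{\cA}(\oplam(t+W))\geq\dens(\cL)\theta_H(W)$ does not follow. Note that this invariance is exactly what distinguishes the literal statement of (a) from the weaker assertion that the coset of $(s,t)$ \emph{contains a representative} $(s',t')$ with $-s'+\oplam(t'+W)$ maximal; the latter is all that Theorem~\ref{thm:torus par 1} and Corollary~\ref{cor 84} deliver, and it already suffices for your proofs of (b), (c) and (d). To close the gap you would need a different argument for the invariance (e.g.\ via the characterisation $H_W=H_{app}$ and the mean approximation \eqref{eq9}), or else weaken (a) to the existential formulation.
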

\begin{proof}
\textbf{(a)} Follows from Theorem~\ref{thm:torus par 1}.

\textbf{(b)} Follows from Theorem~\ref{thm:main}.

\textbf{(c)}

We are following here the idea of \cite[Lemma~4.2]{Martin2}.

Recall here that $G$ is second countable.

By the compactness of $W$, there exists some $r_0>0$ such that all elements $\Gamma \in \extW$ are $r_0$-uniformly discrete.
Since $G$ is $\sigma$ compact, we can find a sequence $K_m$ of compact sets such that $0 \in K_1^\circ$, $K_m \subseteq K_{m+1}^\circ$ and
\[
G =\bigcup_m K_m \,.
\]

Let $\Gamma_n, \Gamma \in \YY_{app}$ be so that $\Gamma_n \to \Gamma$. Since the topology on $\extB$ is metrisable, we need to show that $\pi(\Gamma_n) \to \pi(\Gamma)$.

By Lemma~\ref{l4}, by eventually replacing $\Gamma_n, \Gamma$ with $-x+\Gamma_n$ and $-x+\Gamma$ for some $x \in \Gamma$, we can assume without loss of generality that
$0 \in \Gamma$.

Next, fix some $r>0$ such that $2r <r_0$. Since $\Gamma_n \to \Gamma$, there exists some $N$ such that, for all $n>N$ we have $0 \in \Gamma_n+B_r(0)$.

Therefore, for each $n >N$ there exists some $x_n \in B_r(0)\cap \Gamma_n$, which must be unique by the $r_0$-uniform discreteness of $\Gamma_n$.
By eventually erasing the first few $\Gamma_n$ we can assume without loss of generality that such $x_n$ exists for all $n$.

It is easy to see that
\[
\lim_n x_n =0
\]
in $G$.

Indeed, let $\eps >0$ and pick $f_\eps$ such that $f_\eps \geq 0, \supp(f_\eps) \subseteq B_{\eps}(0) \cap B_r(0)$ and $f_\eps0(0)=1$.

Since $\Gamma_n \to \Gamma$ in the local rubber topology, we have
\[
\delta_{\Gamma}(f_\eps)= \lim_n \delta_{\Gamma_n}(f_\eps) \,.
\]

Therefore, there exists some $N$ such that, for all $n>N$ we have
\[
\left| \delta_{\Gamma}(f_\eps)- \delta_{\Gamma_n}(f_\eps) \right| < \frac{1}{2} \,.
\]
Since $0 \in \Gamma$ we have $\delta_{\Gamma}(f_\eps) \geq f_{\eps}(0)=1$ and hence $\delta_{\Gamma_n}(f_\eps) \geq \frac{1}{2}$.

This implies that
\[
\Gamma_n \cap \supp(f_\eps) \neq \emptyset \,.
\]
Thus, we get
\[
\emptyset \neq \Gamma_n \cap \supp(f_\eps) \subseteq \Gamma_n \cap B_r(0) = \{ x_ n\}
\]
and hence
\[
\Gamma_n \cap \supp(f_\eps)= \{ x_ n\} \,.
\]
It follows that $x_n \subseteq \supp(f_\eps) \subseteq B_{\eps}(0)$.

This shows that for all $n >N$ we have $x_n \in B_\eps(0)$. This proves that
\[
\lim_n x_n =0 \,.
\]

\medskip

Next, define
\[
\Lambda_n := -x_n +\Gamma_n \,.
\]

Then, $\Lambda_n \in \YY_{app}, 0 \in \Lambda_n, \Lambda_n \to \Gamma$ and
\[
\pi(\Gamma_n) -\pi(\Lambda_n)= (x_n ,0) +\cL + \{ 0 \} \times H_W \,.
\]

Therefore, to complete the proof, it suffices to show that $\pi(\Lambda_n) \to \pi(\Gamma)$.

For each $\Lambda_n$ pick some $(s_n',t_n') \in \cL$ such that
\[
\Lambda_n \subseteq -s_n'+\oplam(t_n'+W) \,,
\]
and pick some $(s',t') \in \cL$ such that
\[
\Gamma \subseteq -s'+\oplam(t'+W) \,.
\]

Since $0 \in \Lambda_n$ we have $s_n' \in L$ and hence
\[
\Lambda_n \subseteq \oplam(s_n'+t_n'+W) \,.
\]
Same way
\[
\Gamma \subseteq \oplam(s'+t'+W) \,.
\]

Setting
\begin{align*}
  t_n &:=s_n'+t_n'  \\
  t&:=s'+t'
\end{align*}
we get
\begin{align*}
  \Lambda_n &\subseteq \oplam(t_n+W) \,, \\
 \Lambda & \subseteq \oplam(t+W) \,,
\end{align*}
and
\begin{align*}
  \pi(\Lambda_n)&= (0, t_n)+\cL+\{ 0\} \times H_W \,, \\
  \pi(\Gamma)&= (0, t)+\cL+\{ 0\} \times H_W  \,.\\
\end{align*}

To complete the proof we show that $t_n +H_W \to t+H_W$ in $H/H_{W}$.

Let $V$ be an  neighbourhood of $t+H_W$ in $H/H_W$. Then, there exists a neighbourhood $t\in U$ such that
\[
U_W:= U+H_{W} \subseteq H
\]
satisfies
\[
p(U_W) \subseteq V \,,
\]
where $p : H \to H/H_W$ is the canonical projection.

Note that
\[
U_W =\bigcup_{ h\in H_W} h+U
\]
is open in $H$.

Next, as in \cite[Lemma~4.1]{Martin2}, let us look at the set
\[
J:= \bigcup_{x \in \Gamma} (x^\star -W) \,.
\]

We claim
\[
t \in J \subseteq t+U_W \,.
\]

Indeed, for all $x \in \Gamma \subseteq \oplam(t+W)$ we have $x^\star \in t+W$ and hence $t \in x^\star- W$. This gives $t \in J$.

Fix some $h \in J$.

Then, for all $x \in \Gamma$ we have $h \in x^\star -W$ and hence
\[
x^\star \in h+W \,.
\]
This gives
\[
\Gamma \subseteq \oplam(h+W) \,.
\]
Then, by Proposition~\ref{Prop1} we have
\[
(0,h) +\cL+\{ 0\} \times H_W  = (0,t)+\cL+\{ 0\} \times H_W \,.
\]
Since $\cL \cap \{ 0\} \times H = \{ (0,0) \}$ this implies that $h \in t+H_W$ which completes the claim.

Now, we employ the Schlottmann idea \cite[Lemma~4.1]{Martin2}: Since

\[
J:= \bigcup_{x \in \Gamma} (x^\star -W) \subseteq t+H_W
\]
we have
\[
\bigcup_{x \in \Gamma} \left( (x^\star -W) \backslash (t+H_W) \right) =\emptyset \,.
\]

Since $\left( (x^\star -W) \backslash (t+H_W) \right)$ is compact for each $n$, there exists a finite set $F \subseteq \Gamma$ such that

\[
\bigcup_{x \in F} \left( (x^\star -W) \backslash (t+H_W) \right) =\emptyset \,,
\]
and hence
\[
\bigcup_{x \in F}(x^\star -W) \subseteq (t+H_W) \,.
\]

Now, since $\Gamma_n \to \Gamma, 0 \in \Gamma_n, 0 \in \Gamma$ and $\Gamma_n- \Gamma_n, \Gamma-\Gamma$ are equi uniformly discrete, it follows immediately that there exists some $N$ such that, for all $n >N$ we have $F \subseteq \Gamma_n$.

This implies that, for all $n>N$ we have
\[
\bigcup_{x \in \Gamma_n}(x^\star -W) \subseteq \bigcup_{x \in F}(x^\star -W) \subseteq (t+H_W) \,.
\]

Pick an arbitrary $n>N$. Then, for all $x \in \Gamma_n$ we have $x^\star \in t_n+W$ and hence $t_n \in x^\star -W$. This gives
\[
t_n \in \bigcup_{x \in \Gamma_n}(x^\star -W) \subseteq \bigcup_{x \in F}(x^\star -W) \subseteq (t+H_W) \forall n >N
\]
and hence $t_n+H_W \in U$ for all $n>N$.

This shows that
\[
\lim_n t_n+H_W = t+H_W \mbox{ in } H/H_W \,,
\]
which completes the proof of (c).

\textbf{(d)}

The set $\YY_{app}$ has full measure in $\extW$ and $\pi(\YY_{app})$ has full measure in $\TT$. Since $\pi : \YY_{app} \to \TT_{red}$ is continuous, it is Borel.

Finally, the pushforward of $m$ through $\pi_{app}$ is $G$ invariant. Since the orbit of $G$ is dense in $\TT$, it follows that the push forward of $m$ is a Haar measure on $\TT$ and hence the probability Haar measure.
\end{proof}

\begin{remark} When $\theta(W)=0$ then the zero measure $ 0 \in \extW$ and $\m=\delta_{0}$. Moreover, in this case the previous theorem trivially holds if we define $\TT_{red}= (G \times H)/ \overline{\cL+\{ 0\} \times H_W}$, in which case it is the trivial group.
\end{remark}

Next, by Pontryagin Duality we have a canonical isomorphism between $\widehat{\TT_{red}}$ and
\begin{align*}
\SSS:= \mbox{Ann} (\cL +\{ 0 \} \times H_W) = \{ (\chi, \chi^\star) \in \cL^0 | \chi^\star(x)=1 \forall x \in H_W \} \\
\end{align*}
given by
\[
(\chi, \chi^\star) \left( (s,t)+\cL+ \{ 0\} \times H_W \right)\to  \chi(s) \chi^\star(t) \,.
\]

Next, since $1_{W} \in L^1(H)$ is $H_W$ periodic, the function $\widecheck{1_W}$ is supported on $\{ \psi \in H : \psi(x)=1 \forall x \in H_W\}$. It follows immediately that
\begin{equation}\label{eq 9}
\{ \chi : \widehat{\gamma}(\{ \chi \}) \neq 0 \} \subseteq \{ \chi : \exists  \psi \in \widehat{H} : (\chi, \psi) \in \SSS \} \,.
\end{equation}

Combining \eqref{eq 9} with the results in this section we get the following summary.

\begin{theorem}\label{Main 3} Let $(G,H, \cL)$ be a CPS and $W$ a compact window. Then
\begin{itemize}
  \item[(a)]
\begin{align*}
\YY_{app}&=\YY_{b,\m}=\YY_{m}= \{ \Gamma \in \extW : \dens_{\cA}(\Gamma)= \dens(\cL) \theta_H(W) \} \,.
\end{align*}
\item[(b)]   $\m(Y_{app})=1$.
\item[(c)] For $(s,t)+\cL \in \TT$ we have $ (s,t)+\cL \in \mbox{Im}(\pi)$ if and only if $-s+\oplam(t+W)$ is a maximal density weak model set.
\item[(d)] $\theta_{\TT_{red}}(\mbox{Im}(\pi)) = 1$.
\item[(e)] The mapping $\pi : \YY_{app} \to \TT_{red}$ is continuous.
\item[(f)] For $\Gamma \in \YY_{app}$ we have $\pi(\Gamma)= (s,t)+\cL + \{0 \} \times H_W$ if and only if there exists $(s',t') \in  (s,t)+\cL + \{0 \} \times H_W$
such that
\[
\Gamma \subseteq -s'+\oplam(t+W) \,.
\]
\item[(g)] $\pi$ induces a Borel factor from $(\extW, G, \m)$ to $(\TT_{red}, G)$.
\item[(h)] For each $(\chi, \chi^\star) \in \SSS$ the mapping $f_\chi( \Gamma):= (\chi, \chi^\star) \left(  \pi(\Gamma) \right)$ is an eigenfunction for $\chi$
  which is continuous on $\YY_{app}$.
\item[(i)] $\pi$ induces an isometric isomorphism
\[
\pi: L^2(\TT_{red}, \theta_{\TT_{red}}) \to L^2(\extW, \m) \,.
\]
\end{itemize}
\end{theorem}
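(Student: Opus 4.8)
Most of the statement simply repackages results already established, so the plan is to cite those and then supply the two genuinely new ingredients, (h) and (i). Part (a) is exactly Theorem~\ref{thm:torus par 1}; part (b) follows from (a) together with $\m(\YY_{b,\m})=1$, which is Theorem~\ref{thm:main 2}(a); parts (c), (d), (e) and (g) are respectively parts (a), (b), (c) and (d) of Theorem~\ref{thm:8.5}; and part (f) is the equivalence of (i) and (ii) in Corollary~\ref{cor 84}.

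For (h), I would use the canonical isomorphism $\SSS\cong\widehat{\TT_{red}}$ recorded just before the theorem, under which $f_\chi=(\chi,\chi^\star)\circ\pi$ is the composition of the continuous map $\pi:\YY_{app}\to\TT_{red}$ from (e) with a continuous character of $\TT_{red}$; hence $f_\chi$ is continuous on $\YY_{app}$. The eigenfunction property is then immediate from the $G$-equivariance $\pi(s+\Gamma)=(s,0)+\pi(\Gamma)$, which gives $f_\chi(T_s\Gamma)=\chi(s)f_\chi(\Gamma)$ for all $s\in G$ and $\Gamma\in\YY_{app}$, and $f_\chi$ is not $\m$-a.e.\ zero since $|f_\chi|\equiv1$ on the full-measure set $\YY_{app}$.

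The substantive part is (i). By (d) and (g), $\pi$ pushes $\m$ forward to the probability Haar measure $\theta_{\TT_{red}}$ on $\TT_{red}$ and is defined on the full-measure set $\YY_{app}$, so the pullback $\pi^{*}g:=g\circ\pi$ is a well-defined linear isometry $L^2(\TT_{red},\theta_{\TT_{red}})\to L^2(\extW,\m)$; it remains to prove that its range is dense. The characters of $\TT_{red}$ form an orthonormal basis of $L^2(\TT_{red},\theta_{\TT_{red}})$, and under $\widehat{\TT_{red}}\cong\SSS$ their $\pi^{*}$-images are exactly the functions $f_\chi$, $(\chi,\chi^\star)\in\SSS$, from (h); so I would show that these span a dense subspace of $L^2(\extW,\m)$. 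Since $(\extW,G,\m)$ has pure point dynamical spectrum (Theorem~\ref{thm:main}(h)), $L^2(\extW,\m)$ is the closed linear span of its measurable eigenfunctions, and by ergodicity every eigenvalue has a one-dimensional eigenspace. By the same part of Theorem~\ref{thm:main}, the group of eigenvalues is generated by $\{\chi:\widehat\gamma(\{\chi\})\neq0\}$, which by \eqref{eq 9} lies in $\pi_{\widehat G}(\SSS)$; since $(\widehat G,\widehat H,\cL^0)$ is a CPS, $\pi_{\widehat G}|_{\cL^0}$ is injective, hence so is $\pi_{\widehat G}|_{\SSS}$, and $\pi_{\widehat G}(\SSS)$ is a subgroup of $\widehat G$ isomorphic to $\SSS$. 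Thus every eigenvalue $\chi$ of the system admits a unique $\chi^\star$ with $(\chi,\chi^\star)\in\SSS$, and by (h) and simplicity its eigenspace is spanned by the corresponding $f_\chi$; so the $f_\chi$ exhaust all measurable eigenfunctions up to scalars and span a dense subspace, making $\pi^{*}$ a surjective isometry, hence an isometric isomorphism.

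I expect the main obstacle to be exactly this last step of (i): identifying every measurable eigenfunction of $(\extW,G,\m)$ as a scalar multiple of some $\pi^{*}(\chi,\chi^\star)$ requires combining the pure point spectrum result of \cite{LSS}, the simplicity of eigenvalues for ergodic group actions, the spectral inclusion \eqref{eq 9}, and the injectivity of the star map of the dual CPS; everything else is bookkeeping.
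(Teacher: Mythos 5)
Your proposal is correct and follows essentially the same route as the paper: parts (a)--(g) by the same citations, (h) as the composition of the continuous map $\pi$ with a character of $\TT_{red}$ plus $G$-equivariance, and (i) by the isometry-from-pushforward argument together with density of the range via pure point spectrum, the spectral inclusion \eqref{eq 9}, and one-dimensionality of eigenspaces. The only cosmetic difference is in the density step of (i): the paper writes each eigenvalue explicitly as a product of generators from $\{\chi:\widehat\gamma(\{\chi\})\neq0\}$ and takes the corresponding product of the $f_{\chi_i}$, whereas you observe directly that the group generated by these characters lies in $\pi_{\widehat G}(\SSS)$ and lift each eigenvalue at once; the two formulations are equivalent.
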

\begin{proof}
\textbf{(a)} Follows from Theorem~\ref{thm:torus par 1}.

\medskip
\textbf{(b)} Follows from (a) and Thm.~\ref{thm:main 2}.

\medskip
\textbf{(c),(d),(e), (g)} Follow from Thm.~\ref{thm:8.5}.

\medskip
\textbf{(f)} Follows from Cor.~\ref{cor 84}.

\medskip
\textbf{(h)} Let $(\chi, \chi^\star) \in \SSS$ and define $f_\chi : \YY_{app} \to \CC$
\[
f_\chi:=(\chi, \chi^\star) \circ \pi \,.
\]

Then, as composition of continuous functions, $f_\chi$ is continuous. Moreover, for all $s \in G$ and $\Gamma \in \YY_{app}$ we have
\[
f_\chi(s+ \Gamma)=(\chi, \chi^\star)(\pi(s+\Gamma))=(\chi, \chi^\star)((s,0)+\pi(\Gamma))=\chi(s) \cdot  \chi^\star(0) f_\chi(\Gamma) \,.
\]
This shows that $f_\chi$ satisfies
\[
f_\chi(s+\Gamma)= \chi(s) f_\chi(\Gamma)
\]
for all $\Gamma \in \YY_{app}$.

Setting $f_{\chi}=1$ outside $\YY_{app}$, and using the fact that $\YY_{app}$ is $G$-invariant and $\m(\YY_{app})=1$ gives that $f_{\chi}$ is an eigenfunction for $\chi$.

\medskip
\textbf{(i)} Since $\pi$ induces a Borel factor from $(\extW, G, \m)$ to $(\TT_{red}, G)$ we get that
\[
\int_{\TT_{red}} g (x) \dd \theta_{\TT_{red}}(x) = \int_{\extW} g \circ \pi (\Gamma ) \dd \m(\Gamma) \,,
\]
for all $g \in C(\TT_{red})$.

In particular, for all $g \in C(X)$ we get

\[
\| g \|_{L^2(\TT_{red}, \theta_{\TT_{red}})} = \|  g \circ \pi \|_{L^2(\extW, \m)}  \,.
\]

Via a standard density argument, the mapping $\pi$ has an unique extension to an isometric mapping
\[
\pi : L^2(\TT_{red}, \theta_{\TT_{red}}) \to L^2(\extW, \m) \,.
\]

As an isometry, this mapping is one to one. We complete the proof by arguing that this is onto.

Since $\m$ has pure point dynamical system, the span of eigenfunctions is dense in $L^2(\extW, \m)$.  Moreover, the spectrum of $L^2(\extW, \m)$ is generated by
\[
\{ \chi : \widehat{\gamma}(\{ \chi \}) \neq 0 \} =: S \,.
\]

Now, by \eqref{eq 9}, for each $\chi \in S$ there exists some $\chi^\star$ such that $(\chi, \chi^\star) \in \SSS$. Therefore, by (f) there is an eigenfunction
\[
f_\chi =  (\chi, \chi^\star) \circ \pi
\]
which by construction belongs to $\pi (C(\TT_{red})) \subseteq \mbox{Im}(\pi)$.

Now, let $\psi$ be any eigenvalue of $L^2(\extW, \m)$. Then, there exists some $\chi_1,..., \chi_{k}, \chi_{k+1},..., \chi_{j} \in S$ such that
\[
\psi=\chi_1 \cdot \chi_2 \cdot...\cdot \chi_{k} \cdot \overline{\chi_{k+1}} \cdot \ldots \cdot \overline{\chi_j} \,.
\]

It follows immediately that
\[
f_{\chi_1} \cdot f_{\chi_2} \cdot \ldots \cdot f_{\chi_k} \cdot \overline{f_{\chi_{k+1}}} \cdot \ldots  \cdot \overline{f_{\chi_{j}}}
\]
is an eigenfunction for $\psi$, which is the image under $\pi$ of
\[
(\chi_1, \chi_1^\star) \cdot (\chi_2, \chi_2^\star)  \cdot...\cdot (\chi_k, \chi_k^\star)  \cdot \overline{(\chi_{k+1}, \chi_{k+1}^\star) } \cdot \ldots \cdot \overline{(\chi_j, \chi_j^\star) }  \in C(\TT_{red}) \,.
\]
Therefore, the range of $\pi$ contains all the eigenfunctions of $L^2(\extW, \m)$, and hence is dense in $L^2(\extW, \m)$.

Since $\pi : L^2(\TT_{red}, \theta_{\TT_{red}}) \to L^2(\extW, \m)$ is an isometry, the range is a Banach space and hence closed in $L^2(\extW, \m)$. This shows that
\[
\pi (L^2(\TT_{red}, \theta_{\TT_{red}})) = L^2(\extW, \m) \,,
\]
which completes the proof.
\end{proof}

\begin{remark} It follows from Theorem~\ref{Main 3} that $\TT_{red}$ is the Maximum Generic Equicontinuous Factor (MGEF) of $\extW$ (see \cite{Kel1} for definition)).
\end{remark}

\section{Model sets with open precompact windows}

In this section we show that for model sets with open precompact windows we get similar results to the case of weak model sets. The proof techniques are similar, the
only difference is that the inclusion in Lemma~\ref{lem torus} gets reversed.

To keep the length of the paper short we will only prove the results where the proof is not identical to the compact window case, and leave all the other proofs as homework for the reader.

\begin{lemma} Let $(G, H, \cL)$ be a CPS, $U \subseteq H$ be an open precompact window and let $\Gamma \in \extU$. Then, there exists some $(s,t)+\cL \in \TT$ such that
\[
\Gamma \supseteq -s+\oplam(t+U) \,.
\]
\end{lemma}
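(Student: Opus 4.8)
The plan is to follow the proof of Lemma~\ref{lem torus} almost verbatim, reversing the inclusion, with one simplification: for an open window $U$, an interior point of the window survives small perturbations of the window position, so one obtains the containment directly, without the neighbourhood-shrinking/compactness argument that the closed window forces. If $U=\emptyset$ the statement is vacuous ($\extU=\{\emptyset\}$), so assume $U\neq\emptyset$.

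First I would fix a symmetric neighbourhood $V$ of $0$ in $G$ such that every set of the form $-s+\oplam(t+U)$ is $V$-uniformly discrete. This works because $\oplam(t+U)-\oplam(t+U)\subseteq\{\ell\in L:\ell^\star\in\overline U-\overline U\}=\oplam(\overline U-\overline U)$, which is uniformly discrete since $U$ is precompact; translation by $-s$ does not affect uniform discreteness. Since $V$-uniform discreteness is preserved under limits in the local rubber topology, every element of $\extU$, and in particular $\Gamma$, is $V$-uniformly discrete (hence locally finite). Next, choose a net $(s_\alpha,t_\alpha)+\cL\in\TT$ with $-s_\alpha+\oplam(t_\alpha+U)\to\Gamma$ in the local rubber topology; by compactness of $\TT$ pass to a subnet so that $(s_\alpha,t_\alpha)+\cL\to(s,t)+\cL$. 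Lifting through the quotient $G\times H\to\TT$ (a discrete quotient, hence a local homeomorphism) and replacing $(s_\alpha,t_\alpha)$ by a suitable $\cL$-translate $(s_\alpha+\ell_\alpha,t_\alpha+\ell_\alpha^\star)$ — which by \eqref{eq:translate oplam(W)} changes neither the coset in $\TT$ nor the set $-s_\alpha+\oplam(t_\alpha+U)$ — I may assume $(s_\alpha,t_\alpha)\to(s,t)$ in $G\times H$.

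Now I claim $-s+\oplam(t+U)\subseteq\Gamma$. Let $y\in-s+\oplam(t+U)$; then $z:=y+s\in L$ and $z^\star\in t+U$, i.e. $z^\star-t\in U$. Since $U$ is open and $z^\star-t_\alpha\to z^\star-t$, we have $z^\star-t_\alpha\in U$ for all large $\alpha$, that is $z\in\oplam(t_\alpha+U)$, so $y_\alpha:=z-s_\alpha\in-s_\alpha+\oplam(t_\alpha+U)$ for all large $\alpha$, and $y_\alpha\to z-s=y$ in $G$. Finally, to conclude $y\in\Gamma$: pick $f\in\Cc(G)$ with $f\geq 0$, $f(y)=1$, and $\supp f$ small enough that no $V$-uniformly discrete set has two points in $\supp f$; then $\delta_{-s_\alpha+\oplam(t_\alpha+U)}(f)\geq f(y_\alpha)\to 1$, and since the rubber topology is the topology of vague convergence of the associated Dirac combs, $\delta_\Gamma(f)=\lim_\alpha\delta_{-s_\alpha+\oplam(t_\alpha+U)}(f)>0$, so $\Gamma$ meets $\supp f$; letting $\supp f$ shrink to $\{y\}$ and using local finiteness of $\Gamma$ gives $y\in\Gamma$, as wanted. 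The only mildly delicate point is the lifting step together with the verification that the chosen $\cL$-translate leaves $-s_\alpha+\oplam(t_\alpha+U)$ untouched, but that is precisely formula \eqref{eq:translate oplam(W)}; no compactness of the window is needed, openness of $U$ doing all the work.
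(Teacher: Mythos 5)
Your proof is correct and follows essentially the same route as the paper's: approximate $\Gamma$ by a net $-s_\alpha+\oplam(t_\alpha+U)$, pass to a convergent subnet of torus parameters, use openness of $U$ to place each point of $-s+\oplam(t+U)$ in the approximants for large $\alpha$, and finish with a bump-function/uniform-discreteness argument. Your explicit lifting of the convergent net from $\TT$ to $G\times H$ via \eqref{eq:translate oplam(W)} and your endgame via closedness of the uniformly discrete set $\Gamma$ are minor tidyings of steps the paper treats more implicitly (or by contradiction), not a different method.
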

\begin{proof}

Let $(s_n, t_n) +\cL \in \TT$ be so that $-s_n + \oplam(t_n+U) \to \Gamma$. We know that such elements exist. By compactness, the sequence $(s_n,t_n)+\cL$ has a convergent subnet $(s_\alpha, t_\alpha)+\cL$ which converges to some $(s,t)+\cL \in \TT$.

We show that
\[
\Gamma \supseteq -s+\oplam(t+U) \,.
\]

Let $x \in  -s+\oplam(t+U)$ be arbitrary. Then $x+s \in \oplam(t+U)$. This implies that $x+s \in L$ and $(x+s)^\star \in t+U$.

Therefore we have
\[
\lim_\alpha (x+s)^\star-t_{\alpha}=(x+s)^\star-t \in U \,.
\]
Since $U$ is open, there exists some $\alpha_0$ such that, for all $\alpha > \alpha_0$ we have
\[
(x+s)^\star-t_{\alpha}-t_{\alpha} \in U \,.
\]
This gives that
\[
x+s \in \oplam(t_{\alpha}+U)
\]
for all $\alpha >\alpha_0$ and hence
\[
x+s-s_{\alpha} \in -s_{\alpha}+\oplam(t_{\alpha}+U) \,.
\]

By construction $-s_{\alpha}+\oplam(t_{\alpha}+U)$ is a subnet of $-s_n + \oplam(t_n+U)$ and hence
\begin{equation}\label{eq:8}
\Gamma = \lim_{\alpha} -s_{\alpha}+\oplam(t_{\alpha}+U) \,.
\end{equation}

Now, pick $0 \in V'$ open so that $\Gamma$ is $V'$ uniformly discrete, and let $0 \in V=-V$ be open so that $V-V \subset V'$.

Pick some $f \in \Cc(G)$ such that $f(x)=1, f \geq 0$ and $\supp(f) \subseteq x+V$. By \eqref{eq:8} we have
\begin{displaymath}
\delta_{\Gamma}(f) = \lim_{\alpha} \delta_{-s_{\alpha}+\oplam(t_{\alpha}+U)}(f) \,.
\end{displaymath}

Now, for all $\alpha >\alpha_0$ we have $x+s-s_{\alpha} \in -s_{\alpha}+\oplam(t_{\alpha}+U)$ and hence $\delta_{-s_{\alpha}+\oplam(t_{\alpha}+U)}(f)
\geq f(x+s-s_{\alpha})$.

This gives
\begin{displaymath}
\delta_{\Gamma}(f) \geq  \lim_{\alpha}f(x+s-s_{\alpha}) = f(x)=1 \,.
\end{displaymath}

This implies that
\[
\Gamma \cap \supp(f) \neq \emptyset \,.
\]

Therefore, there exists some $y \in \Gamma \cap (x+V)$, which is unique by the $V'$ uniform discreteness of $\Gamma$.

We show that $x=y$, which implies that $x \in \Gamma$. This proves the claim.

Assume by contradiction that $x \neq y$. Then there exists some open neighbourhood $V''$ of $0$ such that $V'' \subseteq V$ and $y-x \notin V''$. We now repeat the above argument with $V''$ instead of $V$:

Pick some $f'' \in \Cc(G)$ such that $f''(x)=1, f'' \geq 0$ and $\supp(f'') \subseteq x+V''$. By \eqref{eq:8} we have
\begin{displaymath}
\delta_{\Gamma}(f'') = \lim_{\alpha} \delta_{-s_{\alpha}+\oplam(t_{\alpha}+U)}(f'') \geq \lim_{\alpha}f''(x+s-s_{\alpha}) = f''(x)=1 \,.
\end{displaymath}

This implies that
\[
\Gamma \cap \supp(f'') \neq \emptyset \,.
\]
It follows that
\[
\emptyset \neq \Gamma \cap \supp(f'') \subseteq \Gamma \cap (x+V'') \subseteq \Gamma \cap (x+V) = \{ y\} \,.
\]
This yields
\[
\Gamma \cap (x+V'') = \{ y\}
\]
and hence
\[
y \in x +V''' \,.
\]

But this contradicts the fact that $y-x \notin V''$.
\end{proof}

The proof of the following result is similar to Theorem~\ref{thm:torus par 1} and we skip it.

\begin{theorem}\label{thm:torus par 2} Let $(G, H, \cL)$ be a CPS and $U \subseteq H$ be an open precompact window. Then,
\begin{align*}
\YY_{app}&=\YY_{b,\m}=\YY_{\m}= \{ \Gamma \in \extU : \dens_{\cA}(\Gamma)= \dens(\cL) \theta_H(U) \} \,.
\end{align*}
Moreover, for each $\Gamma \in \YY_{app}$ there exists some $(s,t)+\cL \in \TT$ with the following properties
\begin{itemize}
\item{} $\Gamma \supseteq -s+\oplam(t+W)$,
\item{} $-s+\oplam(t+U) \in \YY_{app}$,
\item{}
$\dens((-s+\oplam(t+U)) \backslash \Gamma) =0$,
\item{} \eqref{eq9} holds for $\Gamma$ and $-s+\oplam(t+U)$ for $(s,t)+\cL \in \TT$.
\end{itemize}
In particular,
\[
\pi(\Gamma)= (s,t)+\cL+ \{ 0\} \times H_U \,.
\]
\end{theorem}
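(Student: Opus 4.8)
The plan is to reproduce the proof of Theorem~\ref{thm:torus par 1} essentially verbatim, using in place of Lemma~\ref{lem torus} its open-window analogue proved just above, so that the inclusion $\Gamma\subseteq-s+\oplam(t+W)$ there becomes $\Gamma\supseteq-s+\oplam(t+U)$ here, with every set inclusion along the way reversed. If $\theta_H(U)=0$ then $U=\emptyset$ and all assertions are vacuous, so one may assume $\theta_H(U)>0$. The chain $\YY_{app}\subseteq\YY_{b,\m}\subseteq\YY_{\m}\subseteq\{\Gamma\in\extU:\dens_{\cA}(\Gamma)=\dens(\cL)\theta_H(U)\}$ is already available: the first inclusion is the one established for an arbitrary precompact Borel window, the second is the definition of $\YY_{b,\m}$, and the third is Theorem~\ref{thm:main 2}(f). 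So it remains to show that every $\Gamma\in\extU$ with $\dens_{\cA}(\Gamma)=\dens(\cL)\theta_H(U)$ lies in $\YY_{app}$, and to exhibit a suitable $(s,t)+\cL$.

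First I would fix such a $\Gamma$ and apply the lemma above to get $(s,t)+\cL\in\TT$ with $-s+\oplam(t+U)\subseteq\Gamma$. A density squeeze then pins down $-s+\oplam(t+U)$: by \cite{HR} applied to the open precompact set $t+U$ and translation invariance of densities along $\cA$,
\[
\dens(\cL)\theta_H(U)=\dens(\cL)\theta_H\bigl((t+U)^{\circ}\bigr)\le\ldens_{\cA}(-s+\oplam(t+U))\le\udens_{\cA}(-s+\oplam(t+U))\le\udens_{\cA}(\Gamma)=\dens(\cL)\theta_H(U)\,,
\]
so $\dens_{\cA}(-s+\oplam(t+U))$ exists and equals $\dens(\cL)\theta_H(U)$. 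Since $-s+\oplam(t+U)\subseteq\Gamma$ and both sets have this same density, $\dens\bigl(\Gamma\backslash(-s+\oplam(t+U))\bigr)=0$, and hence $\uM_{\cA}\bigl(|\delta_\Gamma-\delta_{-s+\oplam(t+U)}|\bigr)=\uM_{\cA}\bigl(\delta_\Gamma-\delta_{-s+\oplam(t+U)}\bigr)=0$.

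Next I would verify \eqref{eq9} for $-s+\oplam(t+U)$. Because $U$ is open and precompact, in the construction of $K_n\subseteq B\subseteq U_n$ one may take $U_n=U$ for every $n$, so $1_{K_n}\le g_n\le1_U$ and therefore $T_{-s}\omega_{T_tg_n}\le\delta_{-s+\oplam(t+U)}$. The difference $\delta_{-s+\oplam(t+U)}-T_{-s}\omega_{T_tg_n}$ is then a positive measure, and using that $\delta_{-s+\oplam(t+U)}$ has density $\dens(\cL)\theta_H(U)$ (just established) and that $\omega_{g_n}\in\SAP(G)$ (Theorem~\ref{thm:omegah in sap}), which in particular has density $\dens(\cL)\int_H g_n(y)\,\dd y$ (cf.\ \cite{CRS}), one gets
\[
\uM_{\cA}\bigl(|T_{-s}\omega_{T_tg_n}-\delta_{-s+\oplam(t+U)}|\bigr)=\dens(\cL)\Bigl(\theta_H(U)-\int_H g_n(y)\,\dd y\Bigr)\le\dens(\cL)\bigl(\theta_H(U)-\theta_H(K_n)\bigr)<\frac{\dens(\cL)}{n}\,.
\]
Letting $n\to\infty$ gives \eqref{eq9} for $-s+\oplam(t+U)$, hence $-s+\oplam(t+U)\in\YY_{app}$; combining this with $\uM_{\cA}(|\delta_\Gamma-\delta_{-s+\oplam(t+U)}|)=0$ through the triangle inequality, exactly as at the end of the proof of Theorem~\ref{thm:torus par 1}, yields \eqref{eq9} for $\Gamma$, so $\Gamma\in\YY_{app}$ and $\pi(\Gamma)=(s,t)+\cL+\{0\}\times H_U$. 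This turns all the set inclusions into equalities, and the $(s,t)+\cL$ just produced satisfies all the bullet points of the ``moreover'' clause — the bullet $\dens((-s+\oplam(t+U))\backslash\Gamma)=0$ trivially, since $-s+\oplam(t+U)\subseteq\Gamma$, and the substantive statement $\dens(\Gamma\backslash(-s+\oplam(t+U)))=0$ being precisely the output of the density squeeze.

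The one place asking for a little care — and the step I would regard as the main (mild) obstacle — is that density squeeze and its use in the $\uM_{\cA}$ estimate: one needs $\dens_{\cA}(-s+\oplam(t+U))$ to \emph{exist}, which is forced by sandwiching $\ldens$ (from \cite{HR}) against $\udens$ (inherited from $\Gamma$), and one needs $\frac{1}{|A_k|}\bigl(\delta_{-s+\oplam(t+U)}-\omega_{T_tg_n}\bigr)(A_k)$ to be a genuine limit, which follows because both constituent densities converge. Beyond that, every step is the mirror image of the compact-window argument with all inclusions reversed, so no genuinely new difficulty arises.
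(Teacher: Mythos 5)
Your reconstruction is correct and is exactly the argument the paper intends — the paper explicitly omits this proof as "similar to Theorem~\ref{thm:torus par 1}", and you carry out that mirroring faithfully: the open-window lemma gives $\Gamma\supseteq-s+\oplam(t+U)$, the density squeeze via \cite{HR} pins down $\dens_{\cA}(-s+\oplam(t+U))$, the choice $U_n=U$ (legitimate by Corollary~\ref{C1}) gives $T_{-s}\omega_{T_tg_n}\le\delta_{-s+\oplam(t+U)}$ and the $\uM_{\cA}$ bound, and the triangle inequality transfers \eqref{eq9} to $\Gamma$. Your observation that the bullet $\dens((-s+\oplam(t+U))\backslash\Gamma)=0$ becomes vacuous while the substantive content is $\dens(\Gamma\backslash(-s+\oplam(t+U)))=0$ is also accurate.
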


As before, the following is an immediate consequence.

\begin{proposition}\label{Prop1'} Let $(G, H, \cL)$ be a CPS, $U \subseteq H$ be an open precompact window. and $\Gamma \in \YY_{app}$. Then, there exists some $(s,t)+\cL \in \TT$ such that
\[
\Gamma \supseteq -s+\oplam(t+U) \,.
\]
Moreover, if $\Gamma \supseteq -s'+\oplam(t'+U)$ then
$$(s,t)+\cL + 0\times H_{U}= (s',t')+\cL + 0\times H_{U} \,.$$
\end{proposition}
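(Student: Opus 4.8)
The plan is to mirror the proof of Proposition~\ref{Prop1} step by step, making the three changes forced by passing from compact to open windows. The existence of $(s,t)+\cL\in\TT$ with $\Gamma\supseteq -s+\oplam(t+U)$ is precisely the open-window analogue of Lemma~\ref{lem torus} established just above. We may assume $\Gamma\neq\emptyset$: otherwise $\oplam(t+U)=\emptyset$ for every $t$, which forces $\theta_H(U)=0$ and the statement is vacuous; and since $U$ is open, $\Gamma\neq\emptyset$ forces $\theta_H(U)>0$.

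Now suppose $\Gamma\supseteq -s+\oplam(t+U)$ and $\Gamma\supseteq -s'+\oplam(t'+U)$. The first --- and only nontrivial --- step is to deduce $s-s'\in L$. In the compact case this is immediate from the reverse inclusions; here we argue with densities instead. By Theorem~\ref{thm:torus par 2}, $\dens_\cA(\Gamma)=\dens(\cL)\theta_H(U)$, while by \cite{HR} and the openness of $U$ (so that $(t+U)^\circ=t+U$),
\[
\ldens_\cA(-s+\oplam(t+U))\ \geq\ \dens(\cL)\theta_H(U)\,,
\]
and likewise for $-s'+\oplam(t'+U)$. If $s-s'\notin L$, then $-s+\oplam(t+U)\subseteq -s+L$ and $-s'+\oplam(t'+U)\subseteq -s'+L$ are disjoint subsets of $\Gamma$, so superadditivity of $\liminf$ along the van Hove sequence gives $\ldens_\cA(\Gamma)\geq 2\dens(\cL)\theta_H(U)>\dens(\cL)\theta_H(U)=\dens_\cA(\Gamma)$, a contradiction. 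Hence $s-s'\in L$, and we may write $(s',t')=(s,t)+(x,x^\star)+(0,h)$ with $x:=s'-s\in L$ and $h:=t'-t-x^\star\in H$. Using \eqref{eq:translate oplam(W)} one computes $-s'+\oplam(t'+U)=-s+\oplam(t+h+U)$, hence $\Gamma\supseteq -s+\bigl(\oplam(t+U)\cup\oplam(t+h+U)\bigr)=-s+\oplam\bigl((t+U)\cup(t+h+U)\bigr)$. Since a union of open sets is open, \cite{HR} yields
\[
\dens(\cL)\theta_H(U)=\dens_\cA(\Gamma)\ \geq\ \ldens_\cA\bigl(-s+\oplam((t+U)\cup(t+h+U))\bigr)\ \geq\ \dens(\cL)\theta_H\bigl(U\cup(h+U)\bigr)\,,
\]
so $\theta_H(U\cup(h+U))=\theta_H(U)$ and therefore $\theta_H(U\,\Delta\,(h+U))=0$, i.e. $h\in H_U$. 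Thus $(s',t')-(s,t)\in\cL+\{0\}\times H_U$, which is the assertion.

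The only real obstacle is the step $s-s'\in L$: unlike in the compact window case (Proposition~\ref{Prop1}), the set inclusions alone do not force the two windows to sit over the same coset of $L$, and one has to exploit the density rigidity of elements of $\YY_{app}$ (Theorem~\ref{thm:torus par 2}) together with the HR lower density bound for open model sets. Once $s-s'\in L$ is known, everything else is the same short computation as in Proposition~\ref{Prop1}, with closures and upper densities replaced throughout by interiors and lower densities.
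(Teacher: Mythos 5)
Your proof is correct, and it follows the route the paper intends: Proposition~\ref{Prop1'} is stated in the paper as an ``immediate consequence'' with no written proof, the expectation being that one transcribes the proof of Proposition~\ref{Prop1} with inclusions, intersections and upper densities replaced by reverse inclusions, unions and lower densities --- which is exactly what you do for the step $h\in H_U$. The one place where the transcription is not automatic is the deduction $s-s'\in L$: in the compact case this is free because a nonempty $\Gamma$ lies in both model sets, so they intersect, whereas with $\Gamma\supseteq -s+\oplam(t+U)$ and $\Gamma\supseteq -s'+\oplam(t'+U)$ no intersection is forced. Your replacement --- disjointness of the two cosets of $L$ when $s-s'\notin L$, the lower bound $\ldens_{\cA}(-s+\oplam(t+U))\geq\dens(\cL)\theta_H(U)$ from \cite{HR} for open windows, superadditivity of $\liminf$, and the density rigidity $\dens_{\cA}(\Gamma)=\dens(\cL)\theta_H(U)$ from Theorem~\ref{thm:torus par 2} --- is a valid and arguably necessary supplement that the paper glosses over. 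The degenerate case $U=\emptyset$ is handled the same (slightly cavalier) way as in Proposition~\ref{Prop1}; since then $\extU=\{\emptyset\}$ and the whole $\TT_{red}$ framework is only set up for $\theta_H(U)>0$ anyway, this matches the paper's conventions.
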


By combining Theorem~\ref{thm:torus par 2} and Proposition~\ref{Prop1'} we get the following equivalent characterisation for $\pi$.

\begin{corollary} Let $\theta(U)> 0, \Gamma \in \YY_{app}$ and $(s,t) \in G \times H$. Then, the following are equivalent:

\begin{itemize}
  \item[(i)] $\pi(\Gamma)= (s,t)+\cL+ \{ 0\} \times H_U$.
  \item[(ii)] There exists some $(s',t') \in G \times H$ such that $(s,t)+\cL+ \{ 0\} \times H_U=(s',t')+\cL+ \{ 0\} \times H_U$ and
  \[
  \Gamma \supseteq -s'+\oplam(t'+U) \,.
  \]
  \item[(iii)] $s+\Gamma \subseteq L$ and
\[
\theta_H\left( (\overline{ \{ (s+\Gamma)^\star\}} \cap (t+U) \right) =\theta_H(U) \,.
\]
\end{itemize}
\end{corollary}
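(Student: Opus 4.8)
The plan is to imitate the proof of Corollary~\ref{cor 84} step for step, reversing every inclusion $\Gamma\subseteq -s'+\oplam(t'+W)$ into $\Gamma\supseteq -s'+\oplam(t'+U)$ and substituting the open-window results (the preceding lemma, Theorem~\ref{thm:torus par 2}, Proposition~\ref{Prop1'}) for their compact-window analogues (Lemma~\ref{lem torus}, Theorem~\ref{thm:torus par 1}, Proposition~\ref{Prop1}). For \textbf{(i)} $\Longleftrightarrow$ \textbf{(ii)}: the preceding lemma furnishes some $(s_0,t_0)$ with $\Gamma\supseteq -s_0+\oplam(t_0+U)$, and Theorem~\ref{thm:torus par 2} then forces $\pi(\Gamma)=(s_0,t_0)+\cL+\{0\}\times H_U$, so (i) says exactly that $(s,t)$ belongs to this coset; Proposition~\ref{Prop1'} shows that \emph{any} $(s',t')$ with $\Gamma\supseteq -s'+\oplam(t'+U)$ also lies in that coset, so (ii) says the same thing, and hence (i)$\Longleftrightarrow$(ii).

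The work is in \textbf{(ii)} $\Longleftrightarrow$ \textbf{(iii)}. For the forward direction \textbf{(ii)} $\Longrightarrow$ \textbf{(iii)}, write $(s',t')=(s,t)+(x,x^\star)+(0,h)$ with $x\in L$, $h\in H_U$; applying \eqref{eq:translate oplam(W)} to rewrite $\oplam(t'+U)=x+\oplam(t+h+U)$ turns (ii) into $s+\Gamma\supseteq\oplam(t+h+U)$, and since $\theta_H(U)>0$ makes $U$ open and nonempty, density of $L^\star$ in $H$ gives $\oplam(t+h+U)\ne\emptyset$, so $s+\Gamma$ meets $L$. To upgrade this to $s+\Gamma\subseteq L$ I would route through the compact window $\overline U$: any $\Gamma\in\extU$ is a limit in the local rubber topology of sets $-s_n+\oplam(t_n+U)\subseteq -s_n+\oplam(t_n+\overline U)$, a subnet of the latter converges in $\cM_{\overline U}^G$ to some $\Gamma'\supseteq\Gamma$, and Lemma~\ref{lem torus} applied to $\overline U$ yields $\Gamma'\subseteq -s''+\oplam(t''+\overline U)\subseteq -s''+L$; since $s+\Gamma$ meets $L$ this forces $s-s''\in L$, hence $s+\Gamma\subseteq L$. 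Now put $W':=\overline{\{(s+\Gamma)^\star\}}$, a compact set (it lies in $(s-s'')^\star+t''+\overline U$). From the sandwich $\oplam(t+h+U)\subseteq s+\Gamma\subseteq\oplam(W')$, the density bounds of \cite{HR}, and $\dens_{\cA}(\Gamma)=\dens(\cL)\theta_H(U)$ (Theorem~\ref{thm:torus par 2}), one reads off that $W'\supseteq\overline{t+h+U}$; and since $h\in H_U$ gives $\theta_H((t+h+U)\Delta(t+U))=0$ by Lemma~\ref{lemma H_2}, we get $\theta_H(W'\cap(t+U))\ge\theta_H((t+h+U)\cap(t+U))=\theta_H(U)$, with the opposite inequality trivial — this is (iii). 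The converse \textbf{(iii)} $\Longrightarrow$ \textbf{(ii)} is the reverse computation, formally identical to the one in Corollary~\ref{cor 84}: from $s+\Gamma\subseteq L$ and $\theta_H(W'\cap(t+U))=\theta_H(U)$, comparing $W'$ with the two translates $t+U$ and $t_0+x^\star+U$ that both must sit inside it, using the density identity to pin down $\theta_H(W')$ (via $\overline U$, splitting off $\partial U$ if $\theta_H(\partial U)\neq 0$) and a symmetric-difference estimate, one concludes $t-t_0-x^\star\in H_U$, i.e. $(s,t)$ and $(s_0,t_0)$ define the same class in $\TT_{red}$, which is (ii).

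The only genuine divergence from the compact case — and the step I expect to be the main obstacle — is establishing $s+\Gamma\subseteq L$ together with enough control on $\theta_H(W')$: when $W$ is compact these come for free from $\Gamma\subseteq -s'+\oplam(t'+W)\subseteq -s'+L$, whereas for an open $U$ the inclusion runs the wrong way, so one must detour through $\overline U$ and use $\dens_{\cA}(\Gamma)=\dens(\cL)\theta_H(U)$ to rule out density-zero points of $\Gamma$ lying off the lattice translate and to handle the possibly positive measure of $\partial U$. Everything after that point is the measure-theoretic bookkeeping already performed in Corollary~\ref{cor 84}, with $h\in H_U$ absorbing the translation by $h$.
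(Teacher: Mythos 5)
The paper offers no written proof of this corollary (the section's preamble delegates to the reader everything "identical to the compact window case"), so the only meaningful comparison is with the template you correctly identify, namely Corollary~\ref{cor 84}. Your treatment of (i)$\Leftrightarrow$(ii) and of (ii)$\Rightarrow$(iii) is sound: the detour through $\overline U$ to obtain $s+\Gamma\subseteq L$ works (one can also just note that $\Gamma-\Gamma\subseteq\oplam(U-U)\subseteq L$ because $\oplam(U-U)$ is uniformly discrete, hence closed), and in fact the inclusion $W'\supseteq t+h+U$ needs no density bookkeeping at all --- it follows from density of $L^\star$ in $H$, since $L^\star\cap(t+h+U)$ is dense in the open set $t+h+U$ and $W'$ is its closure.

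The genuine gap is (iii)$\Rightarrow$(ii), exactly the step you defer to "pinning down $\theta_H(W')$ via $\overline U$, splitting off $\partial U$". That cannot be done. In the compact case the sandwich $t'+h+W\supseteq W'$ together with the density count forces $\theta_H(W')=\theta_H(W)$, and (iii) then reads $W'=t+W$ modulo null sets. For an open window the containment runs the other way: $W'=\overline{(s+\Gamma)^\star}\supseteq\overline{t_0+U}$, so $\theta_H(W')\geq\theta_H(\overline U)$, which strictly exceeds $\theta_H(U)$ whenever $\theta_H(\partial U)>0$; no upper bound on $\theta_H(W')$ of the required size is available, and condition (iii) only asserts $t+U\subseteq W'$ modulo null sets --- a one-sided constraint. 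Concretely, take a CPS whose internal group $H$ is compact and totally disconnected (e.g.\ $\ZZ\times\ZZ_p$ with the diagonal lattice) and let $U$ be open and dense with $0<\theta_H(U)<\theta_H(H)$. For a generic position, $\Gamma=\oplam(t_0+U)\in\YY_{app}$ and $W'=\overline{t_0+U}=H$, so $\theta_H(W'\cap(t+U))=\theta_H(U)$ for \emph{every} $t\in H$, while (i) and (ii) hold only for $t-t_0\in H_U$, and $H_U\neq H$ because $1_U$ is not a.e.\ constant. So (iii) does not imply (ii) in the stated generality; your sketch goes through only under the additional hypothesis $\theta_H(\partial U)=0$ (or in $\RR^d$, where a support-function argument shows $\theta_H((h+U)\setminus\overline U)>0$ for all $h\neq 0$, which is what actually replaces the missing upper bound on $W'$). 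The fix is either to restrict to such windows or to restate (iii) with $t+U$ replaced by a condition that also bounds $W'$ from above in measure.
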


In particular we get:

\begin{theorem} Let $(G,H, \cL)$ be a CPS and $U$ a compact window. Then
\begin{itemize}
  \item[(a)]
\begin{align*}
\YY_{app}&=\YY_{b,\m}=\YY_{m}= \{ \Gamma \in \extU : \dens_{\cA}(\Gamma)= \dens(\cL) \theta_H(U) \} \,.
\end{align*}
\item[(b)]   $\m(Y_{app})=1$.
\item[(c)] For $(s,t)+\cL \in \TT$ we have $ (s,t)+\cL \in \mbox{Im}(\pi)$ if and only if $-s+\oplam(t+U)$ is a minimal density weak model set.
\item[(d)] $\theta_{\TT_{red}}(\mbox{Im}(\pi)) = 1$.
\item[(e)] The mapping $\pi : Y_{app} \to \TT_{red}$ is continuous.
\item[(f)] For $\Gamma \in \YY_{app}$ we have $\pi(\Gamma)= (s,t)+\cL + \{0 \} \times H_U$ if and only if there exists $(s',t') \in  (s,t)+\cL + \{0 \} \times H_U$
such that
\[
\Gamma \supseteq -s'+\oplam(t+W) \,.
\]
\item[(g)] $\pi$ induces a Borel factor from $(\extU, G, \m)$ to $(\TT_{red}, G)$.
\item[(h)] For each $(\chi, \chi^\star) \in \SSS$ the mapping $f_\chi( \Gamma):= (\chi, \chi^\star) \left(  \pi(\Gamma) \right)$ is an eigenfunction for $\chi$
  which is continuous on $\YY_{app}$.
\item[(i)] $\pi$ induces an isometric isomorphism
\[
\pi: L^2(\TT_{red}, \theta_{\TT_{red}}) \to L^2(\extU, \m)
\]
\end{itemize}
\end{theorem}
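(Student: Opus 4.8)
The plan is to follow the proof of Theorem~\ref{Main 3} essentially verbatim, substituting the open-window ingredients for their compact-window counterparts: Theorem~\ref{thm:torus par 2} in place of Theorem~\ref{thm:torus par 1}, and the corollary immediately preceding the present theorem in place of Corollary~\ref{cor 84}. With these substitutions, part~(a) is exactly the statement of Theorem~\ref{thm:torus par 2}, and part~(f) is exactly that corollary. Part~(b) then follows from (a) together with Theorem~\ref{thm:main 2}(a), which gives $\m(\YY_{b,\m})=1$. For part~(c): by Theorem~\ref{thm:torus par 2}, $(s,t)+\cL\in\mbox{Im}(\pi)$ iff $-s+\oplam(t+U)\in\YY_{app}$ iff $\dens_{\cA}(-s+\oplam(t+U))=\dens(\cL)\theta_H(U)$, i.e.\ iff $-s+\oplam(t+U)$ is a minimal density open model set. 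Part~(d) holds because $\m(\YY_{app})=1$ and the pushforward of $\m$ along the $G$-map $\pi$ is a $G$-invariant probability measure on the homogeneous space $\TT_{red}$, hence the Haar probability, so $\pi(\YY_{app})$ has full Haar measure; and part~(g) is then the statement that a continuous $G$-map defined on the full-measure set $\YY_{app}$ whose pushforward is Haar induces a Borel factor, exactly as in Theorem~\ref{thm:8.5}(d).

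The step where the reversal of the inclusion in Lemma~\ref{lem torus} genuinely intervenes, and the main obstacle, is the continuity of $\pi$ in part~(e). I would mimic the Schlottmann-type argument used for Theorem~\ref{thm:8.5}(c): given $\Gamma_n\to\Gamma$ in $\YY_{app}$, use Lemma~\ref{l4} and the equi uniform discreteness of $\extU$ to reduce to the case $0\in\Gamma$, $0\in\Gamma_n$, $\Gamma_n\to\Gamma$ (the reduction shifts $\pi$ by $(x_n,0)\to 0$), choose via Proposition~\ref{Prop1'} elements $(0,t_n)+\cL$ and $(0,t)+\cL$ with $\oplam(t_n+U)\subseteq\Gamma_n$ and $\oplam(t+U)\subseteq\Gamma$, and prove $t_n+H_U\to t+H_U$ in $H/H_U$. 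Here the role played in the compact case by the union of compact sets $\bigcup_{x\in\Gamma}(x^\star-W)$ (shown to lie in $t+H_W$) is taken over by the open set $O:=\bigcup_{y\in L\setminus\Gamma}(y^\star-U)$: the inclusion $\oplam(t+U)\subseteq\Gamma$ says precisely that $(t+U)\cap\{y^\star:y\in L\setminus\Gamma\}=\emptyset$, i.e.\ $t\notin O$, and for any $h\notin O$ one gets $\oplam(h+U)\subseteq\Gamma$, whence $h\in t+H_U$ by Proposition~\ref{Prop1'}; thus $H\setminus O\subseteq t+H_U$. Using that $O$ is open (in place of compactness of the sets $x^\star-W$), together with the rubber-topology convergence $\Gamma_n\to\Gamma$ and equi uniform discreteness, one argues as in Theorem~\ref{thm:8.5}(c) that for every prescribed neighbourhood of $t+H_U$ in $H/H_U$ one eventually has $t_n$ in the corresponding saturated neighbourhood; this is the continuity of $\pi$, and it is the one place where the argument is not a word-for-word transcription of the compact case.

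Parts~(h) and~(i) are formal and carry over verbatim from Theorem~\ref{Main 3}. For (h), $f_\chi:=(\chi,\chi^\star)\circ\pi$ is continuous on $\YY_{app}$ as a composition of continuous maps and satisfies $f_\chi(s+\Gamma)=\chi(s)f_\chi(\Gamma)$ by the $G$-equivariance of $\pi$; extending $f_\chi$ by $1$ off the $G$-invariant full-measure set $\YY_{app}$ produces an eigenfunction for $\chi$. For (i), the Borel factor property shows $\pi$ extends to an isometry $L^2(\TT_{red},\theta_{\TT_{red}})\to L^2(\extU,\m)$, whose range is therefore closed; it contains every eigenfunction of $(\extU,G,\m)$ because the pure point spectrum is generated by $\{\chi:\widehat{\gamma}(\{\chi\})\neq 0\}$, which by the open-window version of \eqref{eq 9} (valid since $1_U\in L^1(H)$ is $H_U$-periodic, so $\widecheck{1_U}$ is supported on the annihilator of $H_U$) is contained in $\{\chi:\exists\,\chi^\star,(\chi,\chi^\star)\in\SSS\}$, and each such $\chi$ is realized by $f_\chi\in\pi(C(\TT_{red}))$; hence the range is dense, so all of $L^2(\extU,\m)$.
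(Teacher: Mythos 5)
Your overall strategy is exactly the one the paper intends: the paper proves only the reversed inclusion lemma for open windows and explicitly leaves the rest "as homework," so transplanting the proof of Theorem~\ref{Main 3} with Theorem~\ref{thm:torus par 2} and the open-window corollary substituted in is the right move, and your treatment of (a)--(d) and (f)--(i) is correct and matches what the compact-window proofs give after the substitution. You also correctly isolate (e) as the one place where the argument is not a transcription.

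However, your sketch of (e) has a concrete gap at the point where you invoke openness of $O:=\bigcup_{y\in L\setminus\Gamma}(y^\star-U)$ "in place of compactness." The identification $\{h: \oplam(h+U)\subseteq\Gamma\}=H\setminus O\subseteq t+H_U$ is fine, but the step that actually drives Schlottmann's argument in Theorem~\ref{thm:8.5}(c) is a finite-subcover extraction: there one covers a \emph{compact} set (a finite intersection of translates $x^\star-W$, minus an open neighbourhood of $t+H_W$) and then transfers the finitely many constraints from $\Gamma$ to $\Gamma_n$ via rubber convergence. In your version the set that would need a finite subcover by the open sets $y^\star-U$ is $H\setminus(t+U'+H_U)$, which is closed but in general not compact (unless $H$ is), so no finite $F\subseteq L\setminus\Gamma$ with $H\setminus(t+U'+H_U)\subseteq\bigcup_{y\in F}(y^\star-U)$ need exist, and the constraint "$t_n\notin O_n$" cannot be localized. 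There is also a secondary issue in the reduction: $0\in\Gamma_n\supseteq -s_n+\oplam(t_n+U)$ does not by itself force $s_n\in L$ (one must first invoke Lemma~\ref{lem torus} for the window $\overline{U}$ to see $\Gamma_n\subseteq L$, and handle the case $\oplam(t_n+U)=\emptyset$). A clean repair for (e) is to bypass the covering argument entirely: since $\TT_{red}$ is compact, it suffices to show that every convergent subnet $(s_\alpha,t_\alpha)+\cL\to(\sigma,\tau)+\cL$ of representatives for $\pi(\Gamma_\alpha)$ has limit congruent to $\pi(\Gamma)$ modulo $\{0\}\times H_U$; and the argument that $-\sigma+\oplam(\tau+U)\subseteq\Gamma$ under the hypotheses $-s_\alpha+\oplam(t_\alpha+U)\subseteq\Gamma_\alpha$ and $\Gamma_\alpha\to\Gamma$ is precisely the one the paper \emph{does} write out in this section (the reversed-inclusion lemma), after which Proposition~\ref{Prop1'} pins down the class $(\sigma,\tau)+\cL+\{0\}\times H_U=\pi(\Gamma)$. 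With (e) repaired this way, the rest of your proposal stands.
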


\section{A class of examples with Borel windows}

Consider a CPS $(G,H,\cL)$ and a precompact Borel window $B$.

Theorem~\ref{thm:main 2} gives the inclusion
\[
\YY_{\m} \subseteq \{ \Gamma \in \extB : \dens(\Gamma)= \dens(\cL) \theta_H(B) \}
\]

We have also seen that if $B$ is topological (meaning open or closed), the above inclusion is equality.

We give below a class of examples with Borel window for which this inclusion is strict.

we start with the following simple Lemma.

\begin{lemma}\label{l1} Let $(\RR^d, \RR^m, \cL)$ be any fully Euclidean CPS with one to one star mapping. Let $W \subset \RR^m$ be any regular model set and $\Gamma \subset \oplam(W)$ be any subset. Then, for each $0 <a <\infty$ there exists some precompact Borel set $B_a$ such that
\begin{align*}
  \Gamma &= \oplam(B_a) \\
  \lambda(B_a) &= a \,.
\end{align*}
Here $\lambda$ denotes the Lebesgue measure on $\RR^m$.
\end{lemma}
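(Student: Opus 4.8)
The plan is to build $B_a$ by taking $\Gamma^\star := \{ x^\star : x \in \Gamma \} \subseteq W$ and ``padding'' it with a bounded Borel set of Lebesgue measure exactly $a$ that misses the \emph{whole} set $L^\star := \{ x^\star : x \in L \}$; since enlarging a set by points lying outside $L^\star$ cannot change which lattice points $\oplam(\cdot)$ selects, this will automatically give $\oplam(B_a) = \Gamma$. The key structural input is that the star map is one-to-one, so $L^\star$ is a countable subset of $\RR^m$ and hence Lebesgue-null.

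First I would record the elementary facts: because $\cL$ is a lattice in $\RR^d \times \RR^m$ it is countable, so $L = \pi_G(\cL)$ is countable, whence $\Gamma$ and $L^\star$ are countable; in particular $\Gamma^\star$ is a countable, hence Borel, subset of $W$, and $\lambda(L^\star) = 0$. Next, since $0 < a < \infty$, I would fix a closed ball $E \subseteq \RR^m$ with $\lambda(E) = a$ (possible because the volume of a ball of radius $r$ runs over all of $(0,\infty)$). Then I set
\[
B_a := \Gamma^\star \cup \bigl( E \setminus L^\star \bigr) \,.
\]
This is Borel as a union of two Borel sets, and it is precompact since $B_a \subseteq \overline{W} \cup E$, which is compact (here only precompactness of $W$ is used).

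It then remains to check the two claimed identities, both short. For the measure: $\lambda(\Gamma^\star) = 0$ and $\lambda(E \setminus L^\star) = \lambda(E) = a$ because $\lambda(E \cap L^\star) \le \lambda(L^\star) = 0$; since $E \setminus L^\star \subseteq B_a \subseteq \Gamma^\star \cup E$, this forces $\lambda(B_a) = a$. For $\oplam(B_a) = \Gamma$: by definition $\oplam(B_a) = \{ x \in L : x^\star \in B_a \}$, and for $x \in L$ we have $x^\star \in L^\star$, so $x^\star \notin E \setminus L^\star$; therefore $x^\star \in B_a \Leftrightarrow x^\star \in \Gamma^\star$, which by injectivity of the star map is equivalent to $x \in \Gamma$. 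Hence $\oplam(B_a) = \Gamma$.

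I do not anticipate a genuine obstacle here; the only subtleties worth spelling out are (i) that $\Gamma^\star$ is genuinely Borel --- this is where countability of $L$ (not merely $\lambda(\Gamma^\star) = 0$) is used --- and (ii) that the filler $E \setminus L^\star$ must be made disjoint from \emph{all} of $L^\star$ rather than merely from $\Gamma^\star$, which is exactly what makes $\oplam$ blind to it. Note that regularity of $W$ plays no role in this lemma itself; it is stated only because it is needed when this construction is fed into the earlier theorems about $\oplam(W)$.
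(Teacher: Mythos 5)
Your proof is correct and follows essentially the same construction as the paper: take $\Gamma^\star$ (countable, hence Borel and null) and adjoin a filler set of measure $a$ from which all of $L^\star = \pi_{\RR^m}(\cL)$ has been removed, so that $\oplam$ cannot see the filler. The only cosmetic difference is that the paper uses a cube translated to be disjoint from $W$ where you use a ball in arbitrary position; since both remove $L^\star$ from the filler, this changes nothing.
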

\begin{proof}

Define
$$W_0: = \Gamma^\star = \{ x^\star : \in \Gamma \} \subseteq W$$

Since the lattice $\cL$ is countable and hence so is $\pi_{\RR^m}(\cL)$. In particular, $W_0$ is at most countable and hence it is Borel and satisfies $\lambda W_0=0$. Moreover, since $\star$ is one to one we have
\[
\Gamma = \oplam(W) \,.
\]

Next, pick some $t$ such that $t+ [-\frac{\sqrt[m]{a}}{2}, \frac{\sqrt[m]{a}}{2}]^m \cap W = \emptyset$. Define
\[
W_1:= \left(t+ [-\frac{\sqrt[m]{a}}{2}, \frac{\sqrt[m]{a}}{2}]^m \right) \backslash \pi_{\RR^m}(\cL) \,.
\]
Then $W_1$ is Borel and
$$
\lambda(W_1) = a
$$
and
\[
\oplam(W_1)= \emptyset
\]

Then, the set $B_a=W_0 \cup W_1$ satisfies the desired conditions.

\end{proof}

Now, we can easily construct counterexamples. Pick for example the Fibonacci CPS $(\RR, \RR, \cL)$ and window, and let $\oplam(W)$ be the Fibonacci model set.
Pick a subset $\Gamma \subset \oplam(W)$ which is not pure point diffractive with respect to $A_n =[-n, n]$, for example a generic point set for the Bernoulisation of Fibonacci.

Set $a =\dens_{\cA}(\Gamma)$ and pick the window $B_a$ from Lemma~\ref{l1}. Then, by the construction of $B_a$, for this Borel window we have
\[
\Gamma \in \{ \Lambda \in \extB : \dens(\Lambda)= \dens(\cL) \theta_H(B) \} \,.
\]
On another hand, since every element in $\YY_{\m}$ has pure point diffraction, we have $\Gamma \notin \YY_{\m}$.

\smallskip

While the above example looks a bit artificial, heuristically any Borel set is obtained from a regular window via the addition/subtraction of closed sets, so in some sense, is constructed in a similar manner to the construction of $B_a$ in Lemma~\ref{l1}. Nevertheless, this example shows that in general we cannot have equality of these two sets.

\begin{remark} It is worth pointing out that in this example most of the claims in Theorem~\ref{Main 3} still hold.

Indeed, let $B_a$ be the set defined in Lemma~\ref{l1} and let $K= t+ [-\frac{\sqrt[m]{a}}{2}, \frac{\sqrt[m]{a}}{2}]^m$.
Then $\oplam(K)$ is a regular model set. Let $m$ be the unique ergodic measure on $\XX(\oplam(K))$.

It is easy to see that for all $y \notin \pi_{\RR^m}(\cL)$ we have
\[
\oplam(y+B_a)= \oplam(y+K) \,.
\]

Then, the set $T:= \{ (x,y) \in \TT : y \in \pi_{\RR^m}(\cL)\}$ has measure $0$ in $\TT$, and for all $(x,y)+\cL$ not in $\TT$  the set $-x+\oplam(y+B_a)= -x+\oplam(y+K)$ is generic for $m$.

From here, it follows immediately that
\[
\YY_{\m}=\YY_{b,m}= \YY_{app} = \XX(\oplam(K))
\]
and
\[
\m=m \,.
\]
Finally, $\pi : \YY_{app} \to \TT_{red}$ is just the torus parametrisation for the regular model set $\oplam(K)$.

From here all claims in Theorem~\ref{Main 3} excepting (f) and the last equality in (a) follow.

\end{remark}

\section{Open Questions}

We complete the paper by listing the following natural questions, to which we do not know the answer.

\begin{question} If $B$ is Borel, is it true that $m(\YY_{app})=1$?
\end{question}

\begin{question} Consider a Borel window. Is $\pi$ continuous? Is it a Borel map?
\end{question}

\begin{remark}
\begin{itemize}
  \item[(a)] By Theorem~\ref{thm:main} the range of $\pi$ has full measure in $\TT$ .
  \item[(b)] If $m(\YY_{app})=1$ and $\pi$ is a Borel map, it follows that $\pi$ induces a Borel factor $\pi: \YY_{app} \to \TT_{red}$, which induces an isometric isomorphism
  \[
  \pi : L^2(\TT_{red}) \to L^2(\extB, m) \,.
  \]
\end{itemize}
\end{remark}

\subsection*{Acknowledgments}  The work was supported by NSERC with grant  2020-00038. We are greatly thankful for all the support.

\end{document}